\DeclareMathAlphabet{\mathpzc}{OT1}{pzc}{m}{it}
\theoremstyle{theorem}
\newtheorem{theorem}{Theorem}[section]
\newtheorem*{theorem*}{Theorem}
\newtheorem{corollary}[theorem]{Corollary}
\newtheorem*{corollary*}{Corollary}
\newtheorem{proposition}[theorem]{Proposition}
\newtheorem{lemma}[theorem]{Lemma}
\theoremstyle{remark}
\newtheorem{remark}[theorem]{Remark}
\newtheorem{example}[theorem]{Examples}
\theoremstyle{definition}
\newtheorem{definition}[theorem]{Definition}
\def\gromonad{\mathrm{T}_c}
\newcommand{\comprehensiondoctrine}[1]{#1_c}  
\newcommand{\subobject}[3]{\xymatrix{
#1 \ar@{>->}[r]^{#2} & #3
}}
\newcommand{\pullback}[8]{ \xymatrix@+2pc{ 
#1 \pullbackcorner \ar[r]^{#5} \ar[d]_{#6} & #2 \ar[d]^{#7} \\
#3 \ar[r]_{#8} & #4 
}}
\newcommand{\pullbackcorner}[1][ul]{\save*!/#1+1.2pc/#1:(1,-1)@^{|-}\restore}
\newcommand{\theory}{\mathcal{T}}
\newcommand{\lang}{\mathcal{L}}
\newcommand{\signature}{\mathbf{Sg}}
\def\operatorname#1{\mathop{#1}\nolimits}
\newcommand{\bemph}[1]{\textbf{\emph{#1}}}
\newcommand{\alg}[1]{\operatorname{#1\mbox{-}\mathbf{Alg}}}
\newcommand{\ltil}[1]{\widetilde{#1}}
\newcommand{\wht}[1]{\widehat{#1}}
\newcommand{\Sub}{\mathrm{Sub}}
\newcommand{\ovln}[1]{\overline{#1}}
\newcommand{\angbr}[2]{\langle #1,#2 \rangle} 
\newcommand{\comprl}{\ensuremath{ \{ \hspace{-0.6ex} |}} 
\newcommand{\comprr}{  \ensuremath{ | \hspace{-0.6ex}\}}} 
\newcommand{\comp}[1]{\comprl #1 \comprr}
\newcommand{\freccia}[3]{\xymatrix{#2 \colon #1  \ar[r] &  #3}}
\newcommand{\frecciasopra}[3]{\xymatrix{ #1  \ar[r]^{#2} &  #3}}
\newcommand{\duefreccia}[3]{\xymatrix@C=0.5cm{#2 \colon #1  \ar@{=>}[r] &  #3}}
\newcommand{\modificazione}[3]{\xymatrix@C=0.5cm{#2 \colon #1  \ar@{~>}[r] &  #3}}
\newcommand{\doctrine}[2]{\xymatrix{#2 \colon #1^{\op}  \ar[r] & \infsl }}
\newcommand{\duemorfismo}[6]{\xymatrix{
#1^{op} \ar[rrd]^#2_{}="a" \ar[dd]_{#3^{op}}\\
&& \infsl\\
#5^{op}  \ar[rru]_#6^{}="b"
\ar_#4  "a";"b"}}
\newcommand{\comsquare}[8]{ \xymatrix@+1pc{ 
#1 \ar[r]^{#5} \ar[d]_{#6} & #2 \ar[d]^{#7} \\
#3 \ar[r]_{#8} & #4 
}}
\newcommand{\comsquarelargo}[8]{ \xymatrix@+1pc{ 
#1 \ar[rr]^{#5} \ar[d]_{#6} && #2 \ar[d]^{#7} \\
#3 \ar[rr]_{#8} && #4 
}}
\newcommand{\RFinverso}[1]{R_{\langle F(\pr_1),F(\pr_2)\rangle^{-1}}(#1)}
\newcommand{\RFnormale}[1]{R_{\langle F(\pr_1),F(\pr_2)\rangle}(#1)}
\newcommand{\RGinverso}[1]{R_{\langle G(\pr_1),G(\pr_2)\rangle^{-1}}(#1)}
\newcommand{\qot}[2]{#1_{/#2}}
\newcommand{\formulaincontext}[2]{[#2] \; | \; #1}
\newcommand{\termincontext}[3]{[#3] \; | \; #1 : #2}
\def\ED{\mathbf{ED}}
\def\PD{\mathbf{ED}}
\def\CompAdj{\mathbf{CE}}
\def\CED{\mathbf{CED}}
\def\CE{\mathbf{CE}}
\def\QED{\mathbf{QED}}
\def\mR{\mathbb{\mathcal{R}}}
\def\mA{\mathcal{A}}
\def\mC{\mathcal{C}}
\def\mX{\mathcal{X}}
\def\mD{\mathcal{D}}
\def\mG{\mathcal{G}}
\def\des{\mathrm{Des}}
\def\mT{\mathrm{T}}
\def\mS{\mathrm{S}}
\def\funD{\mathrm{D}}
\def\funQ{\mathrm{Q}}
\def\funC{\mathrm{C}}
\def\funU{\mathrm{U}}
\def\Einv{\reflectbox{E}}
\def\pr{\mathrm{pr}}
\def\id{\mathrm{id}}
\def\op{\mathrm{op}}
\def\Sub{\operatorname{\hspace{0.01cm} Sub}}
\newcommand{\sequentincontext}[3]{#1\vdash #2 \;[#3]}
\def\:{\colon}
\def\infsl{\operatorname{\hspace{0.01cm} \mathbf{InfSL}}}
\begin{document}
\begin{frontmatter}
\title{An algebraic approach to the completions of elementary doctrines}
\author{Davide Trotta}
\address{Department of computer science, University of Pisa, Italy}
\ead{trottadavide92@gmail.com}
\begin{keyword}
Elementary doctrines, property-like monads, quotient completion
\end{keyword}

\begin{abstract}

We provide a thorough algebraic analysis of three known completions having a central role in the exact completions of Lawvere’s doctrines: the one adding comprehensive diagonals (i.e. forcing equality on terms to coincide with the equality predicate), the one adding full comprehensions and the one adding quotients.  We show that all these 2-adjunctions are 2-monadic and that the 2-monads arising from these adjunctions are all property-like. This entails that comprehensive diagonals, full comprehensions and quotients are algebraic properties of an elementary doctrine. Finally, we discuss and present the distributive laws between these 2-monads.
\end{abstract}

\end{frontmatter}

\linenumbers
\section{Introduction}
The topic of completing a given structure with quotients to get a richer one has been widely employed in categorical logic to obtain relative consistency results and its categorical aspects have been studied extensively. The calculus of Partial
Equivalence Relations has many applications in the semantics of programming languages. In Type Theory, models of abstract quotients, known as setoid models, are very useful to formalize mathematical proofs.

Over the years, several constructions and notions of completing a category to an exact category have been introduced both in category theory and categorical logic.

Freyd introduced the notion of exact completion of a regular category in \cite{CA}, Carboni presented the exact completion of a lex category \cite{REC,SFEC,FECLEO} and in recent works \cite{QCFF,EQC,UEC,TECH}, Maietti and Rosolini began to study a categorical structure involved with quotient completions, relativizing the basic concept to a doctrine equipped with a logical structure sufficient to describe the notion of an equivalence relation and quotient.

To this purpose, they considered a generalization of the notion of Lawvere’s hyperdoctrine \cite{AF,DACCC,EHCSAF}, namely elementary doctrine, and they extend the notion of exact completion to elementary doctrines. 

In \cite{EQC,TECH} they proved that this exact completion of elementary doctrines can be obtained as the composite of four minor constructions: the comprehension completion, the quotient completion, the extensional collapse of an elementary doctrine, and finally the last step is the construction of the category of entire functional relations. See \cite[Thm. 4.7]{TECH}.

As pointed out in \cite{TECH} the last construction is a reformulation in the language of doctrines of that introduced by Kelly in \cite{NRRFS}.

In this work, we focus the attention on the first three free constructions involved in the exact completion of a doctrine, since they are those adding new logical structures to a given elementary doctrine.

Our main purpose is to provide a complete algebraic account, employing well-known instruments from the formal theory of monads \cite{TDMT,PDLAVB,OPLS,PDLAVB,UCTSBSSL}, to the three constructions previously mentioned: the comprehension completion, the quotient completion and the comprehensive diagonal completion.

Notice that this approach to completions of doctrines has been applied recently for the case of the existential completion of an elementary doctrine \cite{ECRT} and the elementary completion of a primary doctrine \cite{EMMENEGGER2020106445}. Our work carries on this line and it is part of a more long-term goal whose purpose is to develop a complete description of the main logical constructions and structures in terms of 2-monads and algebras for 2-monads, in order to study logic using formal category theory and universal algebra.

For example, one of the main advantages of these methods is that, using the theory of 2-monads, we can formally distinguish \emph{properties} from \emph{structures} of doctrines and understand how such properties (or structures) can be combined in terms of distributive laws.

Recall that 2-monads can express uniformly and elegantly many algebraic structures, and, in particular, that an action of a 2-monad on a given object encodes a \emph{structure} on that object. When the structure is uniquely determined to within unique isomorphism, to give an object with such a structure is just to give an object with a certain \emph{property}. Those 2-monads for which the algebra structure is essentially unique, if it exists, are called \emph{property-like} \cite{OPLS}.

Therefore, we start by giving a detailed description of the 2-functors and the 2-adjunctions obtained from these completions, and we start our analysis of the 2-monads 
\[\freccia{\ED}{\mT_c,\mT_d,\mT_q}{\ED}\]
where $\ED$ denotes the 2-category of elementary doctrines, and the 2-monads are, respectively, the 2-monad $\mT_c$ of comprehension completion, the 2-monad $\mT_d$ of comprehensive diagonal completion, and finally the 2-monad $\mT$ of quotient completion. 
Then, we study the 2-monadicity of the previous 2-adjunctions. In particular, we prove that the following equivalences of 2-categories hold

\[\CE \equiv \alg{\mT_c}\]
\[\CED \equiv \alg{\mT_d}\]
\[\QED \equiv \alg{\mT_q}\]

where $\CE$ is the 2-category of elementary doctrines with full comprehensions, $\CED$ is the 2-category of elementary doctrines with comprehensive diagonals, and $\QED$ is the 2-category of elementary doctrines with stable quotients.

Moreover, we show that $\mT_c$  is colax-idempotent, $\mT_d$ is pseudo-idempotent and that $\mT_q$ is lax-idempotent. In particular, this implies that all these 2-monads are property-like, and then we can conclude that having comprehensions, quotients or comprehensive diagonals is a \emph{property} of a doctrine, and not only a structure.

Finally, we conclude by showing that the 2-monad $\mT_q$ can be lifted to a 2-monad on the 2-category $\alg{\mT_c}$, and hence that there exists a distributive law of 2-monads $\freccia{\mT_c\mT_q}{\delta}{\mT_q\mT_c}$, while 2-monad $\mT_d$ cannot be lifted either on $\alg{\mT_q}$ or $\alg{\mT_c}$.

%
%
%
In the sections \ref{section 2dim monad} and \ref{sec doctrine} we recall definitions and results on 2-monads and doctrines as needed for the rest of the paper.

In section \ref{sec doctrine with comp} we recall the notion of doctrine with full comprehensions and we construct the 2-functor and the 2-monad $\mT_c$ coming from comprehension completion, showing that it is colax-idempotent and that $\CE\equiv \alg{\mT_c}$.

In section \ref{section comprensive diag} we recall the notion of doctrine with comprehensive diagonals and we construct the 2-functor and the 2-monad $\mT_d$ coming from comprehensive diagonal completion, and we show that it is pseudo-idempotent and that $\CED\equiv \alg{\mT_d}$.

In section \ref{section doctrine with quot} we recall the notion of doctrine with quotients and we construct the 2-functor and the 2-monad $\mT_d$ coming from quotient completion, showing that it is lax-idempotent and that $\QED\equiv \alg{\mT_q}$.

Finally, in section \ref{section pseudo-distributive laws} we discuss and present the distributive laws between the 2-monads.

\section{Two-dimensional monads}\label{section 2dim monad}

This section is devoted to recall some notions and results regarding the formal theory of monads, and to fix the notation. We mainly follow the usual conventions as in \cite{TDMT,PDLAVB,UCTSBSSL}, and we refer the reader to the works of Kelly and Lack
\cite{OPLS}, Tanaka and Power \cite{PDLAVB,UCTSBSSL}, and for a more
general and complete description of these topics one can see the
Ph.D. thesis of Tanaka \cite{PHDTT}, the articles of Marmolejo
\cite{CPLR,DLP} and the work of Kelly \cite{RE2C}. 

Recall that a \bemph{2-monad} $(\mT,\mu, \eta)$ on a 2-category $\mA$ is a
2-functor $\freccia{\mA}{\mT}{\mA}$ together 2-natural transformations
$\freccia{\mT^2}{\mu}{\mT}$ and $\freccia{1}{\eta}{\mT}$ such that the
following diagrams commute
\[\xymatrix@+1pc{
\mT^3\ar[d]_{\mu\mT} \ar[r]^{\mT\mu} &\mT^2\ar[d]^{\mu}\\
\mT^2 \ar[r]_{\mu} & \mT
}\qquad
\xymatrix@+1pc{
\mT \ar[dr]_{\id}\ar[r]^{\eta \mT}& \mT^{2} \ar[d]_{\mu} & \mT \ar[l]_{\mT \eta}\ar[dl]^{\id}\\
& \mT.
}\]
A $\mT$-\bemph{algebra} is a pair $(A,a)$ where $A$ is an object of $\mA$ and $\freccia{\mT A}{a}{A}$ is a 1-cell such that the diagrams 
\[\xymatrix@+1pc{
\mT^2 A  \ar[r]^{\mT a} \ar[d]_{\mu_A} & \mT A \ar[d]^{a}\\
\mT A \ar[r]_a & A
}\qquad
\xymatrix@+1pc{
A \ar[rd]_{1_A} \ar[r]^{\eta_A}& \mT A \ar[d]^{a}\\
& A
}\]
commute.
A \bemph{strict} $\mT$-\bemph{morphism} from a $\mT$-algebra $(A,a)$ to a $\mT$-algebra $(B,b)$ is a 1-cell $\freccia{A}{f}{B}$ such that the following diagram commutes:
\[\xymatrix@+1pc{
\mT A \ar[d]_{a} \ar[r]^{\mT f} & \mT B \ar[d]^b\\
A \ar[r]_f &B
}\] 
while a \bemph{lax} $\mT$-\bemph{morphism} from a $\mT$-algebra $(A,a)$ to a $\mT$-algebra $(B,b)$ is a pair $(f,\ovln{f})$ where $f$ is a 1-cell $\freccia{A}{f}{B}$ and $\ovln{f}$ is a 2-cell
\[\xymatrix@+1pc{
\mT A \ar[d]_{a} \ar[r]^{\mT f} \xtwocell[r]{}<>{_<5>\ovln{f}}& \mT B \ar[d]^b\\
A \ar[r]_f &B
}\] 
which satisfies the following \bemph{coherence} conditions:
\[\xymatrix@+1pc{
\mT^2A \ar[d]_{\mu_A}\ar[r]^{\mT^2f} &\mT B \ar[d]^{\mu_B}&&\mT^2A \ar[d]_{\mT a}\xtwocell[r]{}<>{_<5>\;\;\;\mT\ovln{f}}\ar[r]^{\mT^2f} &\mT B \ar[d]^{\mT b}\\ 
\mT A \ar[d]_{a} \ar[r]^{\mT f} \xtwocell[r]{}<>{_<5>\ovln{f}}&\mT B \ar[d]^b\ar@{}[rr]|{=}&& \mT A \ar[d]_{a} \ar[r]^{\mT f} \xtwocell[r]{}<>{_<5>\ovln{f}}&\mT B \ar[d]^b \\
A \ar[r]_f &B&& A \ar[r]_f &B
}\]
and
\[\xymatrix@+1pc{
A \ar[d]_{\eta_A}\ar[r]^f &B\ar[d]^{\eta_B}&&
A\ar[dd]_{1_A}\ar[r]^f& B\ar[dd]^{1_B}\\
\mT A \ar[d]_{a} \ar[r]^{\mT f} \xtwocell[r]{}<>{_<5>\ovln{f}}&\mT B \ar[d]^b\ar@{}[rr]|{=}&& \\
A \ar[r]_f & B&& A\ar[r]_f &B.
}\]
Observe that regions in which no 2-cell is written commute, so they
are deemed to contain the identity 2-cell.

A lax morphism $(f,\ovln{f})$ in which $\ovln{f}$ is invertible
is said $\mT$-\bemph{morphism}. Hence, a strict $\mT$-morphism is a
$\mT$-morphism where $\ovln{f}$ is the identity 2-cell.

The category of $\mT$-algebras and lax $\mT$-morphisms becomes a
2-category introducing the
$\mT$-transformations as 2-cells: a $\mT$-\bemph{transformation} from the
1-cell $\freccia{(A,a)}{(f,\ovln{f})}{(B,b)}$ to
$\freccia{(A,a)}{(g,\ovln{g})}{(B,b)}$ is a 2-cell
$\duefreccia{f}{\alpha}{g}$ in $\mA$ which satisfies the following
coherence condition
\[\xymatrix{
\mT A \ar[dd]_{a}\rrtwocell<4>^{\mT f}_{\mT g}{\;\;\;\mT\alpha} \xtwocell[rr]{}<>{_<7>\ovln{g}}& &\mT B\ar[dd]^b&&\mT A \ar[dd]_{a}\ar@/^/[rr]^{\mT f} \xtwocell[rr]{}<>{_<7>\ovln{f}}& &\mT B\ar[dd]^b\\
&&\ar@{}[rr]|{=}&&\\
A \ar@/_/[rr]_g&& B&& A\rrtwocell<4>^f_g{\alpha} && B
}\]
expressing compatibility of $\alpha$ with $\ovln{f}$ and $\ovln{g}$. 

Using the notion of $\mT$-morphism, one can express in precise mathematical terms what it
means that an action of a monad $\mT$ on an object $A$ is \emph{unique
to within a unique isomorphism}. In \cite{OPLS} a $\mT$-algebra
structure is essentially unique if, given two actions
$\freccia{\mT A}{a,a'}{A}$, there is a unique invertible 2-cell
$\duefreccia{a}{\alpha}{a'}$ such that
$\freccia{(A,a)}{(1_A,\alpha)}{(A,a')}$ is a morphism of
$\mT$-algebras. 
This is fixed by the following definition of property-like 2-monad.

A 2-monad $(\mT,\mu, \eta)$ is said \bemph{property-like} if it
satisfies the following conditions: 
\begin{itemize}
\item for every $\mT$-algebras $(A,a)$ and $(B,b)$, and for every invertible 1-cell $\freccia{A}{f}{B}$ there exists a unique invertible 2-cell $\ovln{f}$
\[\xymatrix@+1pc{
\mT A \ar[d]_{a} \ar[r]^{\mT f} \xtwocell[r]{}<>{_<5>\ovln{f}}& \mT B \ar[d]^b\\
A \ar[r]_f &B
}\]
such that $\freccia{(A,a)}{(f,\ovln{f})}{(B,b)}$ is a morphism of $\mT$-algebras;
\item  for every $\mT$-algebras $(A,a)$ and $(B,b)$, and for every 1-cell $\freccia{A}{f}{B}$ if there exists a 2-cell $\ovln{f}$
\[\xymatrix@+1pc{
\mT A \ar[d]_{a} \ar[r]^{\mT f} \xtwocell[r]{}<>{_<5>\ovln{f}}& \mT B \ar[d]^b\\
A \ar[r]_f &B
}\]
such that $\freccia{(A,a)}{(f,\ovln{f})}{(B,b)}$ is a lax morphism of $\mT$-algebras, then it is the unique 2-cell with such property.
\end{itemize}

We say that a 2-monad $(\mT,\mu, \eta)$ is 
\bemph{lax-idempotent} when, for 
every $\mT$-algebras $(A,a)$ and $(B,b)$, and for every 1-cell
$\freccia{A}{f}{B}$, there exists a unique 2-cell $\ovln{f}$ 
\[\xymatrix@+1pc{
\mT A \ar[d]_{a} \ar[r]^{\mT f} \xtwocell[r]{}<>{_<5>\ovln{f}}& \mT B \ar[d]^b\\
A \ar[r]_f &B
}\]
such that $\freccia{(A,a)}{(f,\ovln{f})}{(B,b)}$ is a lax morphism of
$\mT$-algebras. In particular every lax-idempotent 2-monad is property-like.

We conclude this section recalling the notion of distributive law between 2-monads, and we refer to \cite{PHDTT,PDLAVB,UCTSBSSL} for a complete exposition of these notions in the general context of pseudo-monads. 

Since the notion of 2-monad represents an elegant way to
describe a structure on a category, the notion of \emph{distributive laws} express how two or more such structures on a category can be combined.

Since in our work all the monads will be simply 2-monads, to simplify the reading, the results are presented in \emph{strict} version.

In particular, given two 2-monads $(\mS,\mu^S,\eta^S)$
and $(\mT,\mu^T,\eta^T)$ on a 2-category
$\mA$, a \bemph{distributive law}
$\delta$ of
$\mS$ over $\mT$ is a natural transformation $\freccia{\mS\mT}{\delta}{\mT\mS}$
such that the following diagrams commute
\[\xymatrix@+1pc{
\mS^2\mT\ar[d]_{\mu^ST} \ar[r]^{S\delta} & \mS\mT\mS \ar[r]^{\delta \mS}& \mT\mS^2\ar[d]^{\mT\mu^S} & \mS\mT^2\ar[d]_{\mS\mu^T} \ar[r]^{\delta \mT}& \mT\mS\mT \ar[r]^{\mT\delta} & \mT^2\mS\ar[d]^{\mu^T \mS}\\
\mS\mT \ar[rr]_{\delta} && \mT\mS &\mS\mT \ar[rr]_{\delta} && \mT\mS
}\]

\[\xymatrix@+1pc{
\mT \ar[d]_{\eta^S \mT} \ar[rd]^{\mT\eta^S}&& \mS \ar[d]_{\mS\eta^T} \ar[rd]^{\eta^T \mS}\\
\mS\mT \ar[r]_{\delta} &\mT\mS& \mS\mT \ar[r]_{\delta} &\mT\mS.
}\]
By a \bemph{lifting} of a 2-monad $\mT$ to the 2-category $\alg{\mS}$ of $\mS$-algebras we mean a 2-monad $\ltil{\mT}$ on the 2-category $\alg{\mS}$ such that $\funU_S\ltil{\mT}=\mT\funU_S$ where $\funU_S$ is the forgetful 2-functor for the 2-monad $\mS$.
\begin{theorem}\label{theorem ps-dist-law equivalent to lifting}
To give a distributive law $\freccia{\mS\mT}{\delta}{\mT\mS}$ of 2-monad is equivalent to give a lifting of the 2-monad $\mT$ to a 2-monad $\ltil{\mT}$ on $\alg{\mS}$.
\end{theorem}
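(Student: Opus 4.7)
The plan is to prove the equivalence by giving explicit constructions in both directions, following the strict 2-categorical analogue of Beck's classical theorem on distributive laws, and then verifying that they are mutually inverse.

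For the forward direction, given a distributive law $\freccia{\mS\mT}{\delta}{\mT\mS}$, I would define $\ltil{\mT}$ on an $\mS$-algebra $(A,a)$ by $\ltil{\mT}(A,a):=(\mT A,\; \mT a \circ \delta_A)$. The two $\mS$-algebra axioms for this pair follow directly from the two multiplication and unit axioms of $\delta$ combined with the corresponding algebra axioms for $a$. On lax $\mS$-morphisms $(f,\ovln{f})$ I set $\ltil{\mT}(f,\ovln{f}):=(\mT f,\mT\ovln{f})$ and on $\mS$-transformations $\ltil{\mT}(\alpha):=\mT\alpha$; the 2-naturality of $\delta$ ensures that the lax-morphism coherences are preserved. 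By construction $\funU_S\ltil{\mT}=\mT\funU_S$. Finally, the unit $\eta^T$ and the multiplication $\mu^T$ lift to strict $\mS$-morphisms between the corresponding $\ltil{\mT}$-algebras by virtue of the two unit axioms of $\delta$ and the hexagonal multiplication axiom involving $\mT\mu^S$, yielding the 2-monad structure on $\ltil{\mT}$.

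For the backward direction, given a lifting $\ltil{\mT}$, I exploit free $\mS$-algebras. For each object $X$ of $\mA$ the pair $(\mS X,\mu^S_X)$ is a free $\mS$-algebra, so $\ltil{\mT}(\mS X,\mu^S_X)$ is an $\mS$-algebra whose underlying object is $\mT\mS X$ (since $\funU_S\ltil{\mT}=\mT\funU_S$); call its structure map $\freccia{\mS\mT\mS X}{\sigma_X}{\mT\mS X}$. I then set
\[\delta_X \;:=\; \sigma_X \circ \mS\mT\eta^S_X.\]
The 2-naturality of $\delta$ follows by applying $\ltil{\mT}$ to morphisms of free algebras and using 2-naturality of $\eta^S$; the unit axioms follow from the $\mS$-unit law of $\sigma_X$ together with the fact that $\eta^T$, being part of the 2-monad structure of $\ltil{\mT}$, lifts to a strict $\mS$-morphism; and the multiplication axioms follow similarly from the $\mS$-associativity law of $\sigma_X$ and from $\mu^T$ lifting to a strict $\mS$-morphism, together with $\ltil{\mT}$ evaluated at the free algebra $(\mS^2 X,\mu^S_{\mS X})$.

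Verifying that the two constructions are mutually inverse is then a direct computation. Starting from $\delta$, one computes the structure map of $\ltil{\mT}(\mS X,\mu^S_X)$ to be $\mT\mu^S_X\circ\delta_{\mS X}$, and precomposing with $\mS\mT\eta^S_X$ recovers $\delta_X$ after using 2-naturality of $\delta$ and the $\mS$-unit law. In the other direction, any $\mS$-algebra $(A,a)$ arises as a split coequalizer of the canonical diagram of free algebras $(\mS^2 A,\mu^S_{\mS A})\rightrightarrows(\mS A,\mu^S_A)$, so a lifting $\ltil{\mT}$ that commutes with $\funU_S$ is determined by its values on free algebras, which are in turn determined by $\delta$. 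The main obstacle is purely bookkeeping: one must check that every 2-monadic unit, multiplication, and lax-morphism coherence diagram survives the passage between $\delta$ and $\ltil{\mT}$. Because the paper restricts to strict 2-monads and strict distributive laws, however, the verification reduces to the classical Beck argument applied pointwise, with the only additional content being the routine 2-naturality checks on 2-cells.
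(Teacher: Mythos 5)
The paper states this theorem without proof, simply recalling it from the cited literature on distributive laws; your argument is exactly the standard Beck-style correspondence given there, adapted to the strict 2-categorical setting, and it is correct, including the reconstruction of the lifting from its values on free algebras via the split (co)equalizer presentation $a\colon(\mS A,\mu^S_A)\to(A,a)$. The only detail worth tightening is the 2-cell component of $\ltil{\mT}(f,\ovln{f})$: after using 2-naturality of $\delta$ to rewrite $\delta_B\circ\mS\mT f$ as $\mT\mS f\circ\delta_A$, the required 2-cell is $\mT\ovln{f}$ whiskered with $\delta_A$, not $\mT\ovln{f}$ on the nose.
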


\begin{theorem}\label{theorem the composite ps-monad is a ps monad}
Given 2-monads $(\mS,\mu^S,\eta^S)$ and $(\mT,\mu^T,\eta^T)$ on a 2-category $\mA$ and a distributive law $\freccia{\mS\mT}{\delta}{\mT\mS}$, the composite 2-functor $\mT\mS$ acquires the structure for a 2-monad on $\mA$, with multiplication given by
\[\xymatrix@+1pc{
\mT\mS\mT\mS \ar[r]^{\mT\delta \mS} & \mT\mT\mS\mS \ar[r]^{\mu^T \mu^S}& \mT\mS
}\]
and $\alg{\mT\mS}$ is canonically isomorphic to $\alg{\ltil{\mT}}$.
\end{theorem}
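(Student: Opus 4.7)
The plan is to proceed in two stages. First, I would verify directly that the 2-functor $\mT\mS$ equipped with unit $\eta := (\eta^T \mS)\cdot \eta^S$ and multiplication $\mu := (\mu^T \mu^S)\cdot (\mT\delta\mS)$ satisfies the 2-monad axioms on $\mA$. The two triangle axioms for the distributive law $\delta$ yield the unit laws for $(\mT\mS,\mu,\eta)$, while associativity of $\mu$ unpacks into a pasting of the two hexagonal axioms of $\delta$, the 2-naturality of $\delta$, and the separate associativities of $\mu^T$ and $\mu^S$. This is a finite, if tedious, diagram chase on a cube-shaped region that must be subdivided via the middle-four interchange.

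Second, using the lifting $\ltil{\mT}$ on $\alg{\mS}$ supplied by Theorem~\ref{theorem ps-dist-law equivalent to lifting}, I would construct strict 2-functors
\[\Phi\:\alg{\ltil{\mT}}\longrightarrow \alg{\mT\mS},\qquad (A,s,t)\longmapsto (A,\, t\cdot \mT s)\]
\[\Psi\:\alg{\mT\mS}\longrightarrow \alg{\ltil{\mT}},\qquad (A,c)\longmapsto \bigl(A,\, c\cdot \eta^T_{\mS A},\, c\cdot \mT\eta^S_A\bigr),\]
each acting as the identity on underlying 1- and 2-cells of $\mA$. This last point is crucial: by whiskering against the units $\eta^T$ and $\eta^S$, an equality of strict/lax $\mT\mS$-morphism squares decomposes into a pair of an $\mS$-square and a $\mT$-square, and conversely the two pieces reassemble to the $\mT\mS$-square via $c = t\cdot \mT s$, so that $\mT\mS$-morphisms and $\mT\mS$-transformations correspond exactly with $\ltil{\mT}$-morphisms and $\ltil{\mT}$-transformations. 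Well-definedness of $\Phi$ on objects is a direct consequence of the hexagonal axioms of $\delta$, while well-definedness of $\Psi$ uses the $\mT\mS$-algebra axioms for $c$ together with the triangle axioms of $\delta$.

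The main calculational hurdle will be verifying $\Phi\Psi = \id$, which amounts to the identity
\[c \;=\; (c\cdot \mT\eta^S_A)\cdot \mT(c\cdot \eta^T_{\mS A})\]
for any $\mT\mS$-algebra $(A,c)$. I expect this to follow by applying the associativity square for $c$ to the left-hand composite, inserting the distributive-law unit identity $\delta_A\cdot \eta^S_{\mT A} = \mT\eta^S_A$ on the northern edge, and collapsing via the monad unit laws applied to $\mu^T$ and $\mu^S$ at their respective unit components. The opposite identity $\Psi\Phi = \id$ is immediate from the unit axioms of $\mT$ and $\mS$ together with 2-naturality of $\eta^T$. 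Once these are in place, strict commutativity with the forgetful 2-functors to $\mA$ is automatic from the construction, giving the claimed canonical isomorphism $\alg{\mT\mS}\cong \alg{\ltil{\mT}}$.
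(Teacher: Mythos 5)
The paper does not actually prove this theorem: it is stated as a recalled classical result (Beck's theorem on distributive laws in its strict 2-categorical form), with the reader referred to the cited sources of Tanaka, Power and Marmolejo. Your plan is the standard argument from those sources and is sound: the unit $(\eta^T\mS)\cdot\eta^S$, the multiplication $(\mu^T\mu^S)\cdot(\mT\delta\mS)$, and the mutually inverse assignments $\Phi\:(A,s,t)\mapsto(A,t\cdot\mT s)$ and $\Psi\:(A,c)\mapsto(A,c\cdot\eta^T_{\mS A},c\cdot\mT\eta^S_A)$ are exactly the classical ones, and your outline of $\Phi\Psi=\id$ (associativity of $c$, then $\delta\cdot\eta^S\mT=\mT\eta^S$, then the unit laws of $\mu^S$ and $\mu^T$) is the correct computation. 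One small bookkeeping correction: the hexagon axioms of $\delta$ are what make $\mu^{TS}$ associative in your first stage; the well-definedness of $\Phi$ on objects does not need them, but instead needs the compatibility (middle-unitary) law $s\cdot\mS t=t\cdot\mT s\cdot\delta_A$ packaged in the notion of $\ltil{\mT}$-algebra, together with 2-naturality of $\delta$ and the associativity of $s$ and $t$. Since everything here is strict, the 1-dimensional argument extends to 2-cells exactly as you indicate.
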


\section{The notion of elementary doctrine}\label{sec doctrine}
F.W.  Lawvere introduced the notion of hyperdoctrine in a series of seminal papers \cite{AF,DACCC,EHCSAF} to synthesize the structural properties of logical systems. Lawvere’s crucial intuition was to consider logical languages and theories as hyperdoctrines to study their 2-categorical properties.

In recent years, the notion of hyperdoctrine has been both specialized and generalized in several contexts. In this work we use the notion of \emph{elementary doctrine} introduced in \cite{QCFF,EQC,UEC} in order to generalize the completion of a categorical structure with quotients. The main idea was to relativize the concept of quotient completion to a many sorted logic, represented categorically by a doctrine validating the logical structure needed to express the notion of equivalence relations.

For the rest of the section $\mC$ is assumed to be a category with binary products, and we denote by $\infsl$ the category of inf-semilattices, i.e. the objects of $\infsl$ are posets with finite meets, and morphisms are functions between them which preserve finite meets.

An \bemph{elementary doctrine} on the category $\mC$ is an indexed inf-semilattice $\doctrine{\mC}{P}$ such that for every $A$ in $\mC$ there exists an object $\delta_A$ in $P(A\times A)$ such that:
\begin{enumerate}
\item the assignment
\[\Einv_{\angbr{\id_A}{\id_A}}(\alpha):=P_{\pr_1}(\alpha)\wedge \delta_A\]
for $\alpha$ in $PA$ determines a left adjoint to $\freccia{P(A\times A)}{P_{\angbr{\id_A}{\id_A}}}{PA}$;
\item for every morphism $e$ of the form $\freccia{X\times A}{\angbr{\pr_1,\pr_2}{\pr_2}}{X\times A\times A}$ in $\mC$, the assignment
\[ \Einv_{e}(\alpha):= P_{\angbr{\pr_1}{\pr_2}}(\alpha)\wedge P_{\angbr{\pr_2}{\pr_2}}(\delta_A)\]
for $\alpha$ in $P(X\times A)$ determines a left adjoint to $\freccia{P(X\times A \times A)}{P_e}{P(X\times A)}$.
\end{enumerate}
\begin{example}\label{example doctrines}
The following examples of elementary doctrine are discussed in \cite{AF,EQC}.
\begin{enumerate}
\item Let $\mC$ be a category with finite limits. The  functor \[\doctrine{\mC}{{\Sub_{\mC}}}\]
is an elementary doctrine, where $\Sub_{\mC}$ is the functor assigning to an object $A$ of $\mC$ the poset $\Sub_{\mC}(A)$ of subobjects of $A$ and, for an arrow $\frecciasopra{B}{f}{A}$ the morphism $\freccia{\Sub_{\mC}(A)}{\Sub_{\mC}(f)}{\Sub_{\mC}(B)}$ is given by pulling a subobject back along $f$. The fibered equalities are the diagonal arrows. 
\item Consider a category $\mD$ with finite products and weak pullbacks. The elementary doctrine of weak subobjects is given by the functor 
\[\doctrine{\mD}{{\Psi_{\mD}}}\]
where $\Psi_{\mD}(A)$ is the poset reflection of the slice category $\mD/A$, and for an arrow $\frecciasopra{B}{f}{A}$, the homomorphism $\freccia{\Psi_{\mD}(A)}{\Psi_{\mD}(f)}{\Psi_{\mD}(B)}$ is given by a weak pullback of an arrow $\frecciasopra{X}{g}{A}$ with $f$.
\item Let $\theory$ be a theory in a first order language $\lang$. We define a primary doctrine 
\[\doctrine{\mC_{\theory}}{LT}\]
where $\mC_{\theory}$ is the category of lists of variables and term substitutions:
\begin{itemize}
\item \bemph{objects} of $\mC_{\theory}$ are finite lists of variables $\vec{x}:=(x_1,\dots,x_n)$, and we include the empty list $()$;
\item a \bemph{morphisms} from $(x_1,\dots,x_n)$ into $(y_1,\dots,y_m)$ is a substitution $[t_1/y_1,\dots, t_m/y_m]$ where the terms $t_i$ are built in $\signature$ on the variable $x_1,\dots, x_n$;
\item the \bemph{composition} of two morphisms $\freccia{\vec{x}}{[\vec{t}/\vec{y}]}{\vec{y}}$ and $\freccia{\vec{y}}{[\vec{s}/\vec{z}]}{\vec{z}}$ is given by the substitution
\[ \freccia{\vec{x}}{[s_1[\vec{t}/\vec{y}]/z_k,\dots, s_k[\vec{t}/\vec{y}]/z_k]}{\vec{z}}.\]
\end{itemize}
The functor $\doctrine{\mC_{\theory}}{LT}$ sends a list $(x_1,\dots,x_n)$ in the class $LT(x_1,\dots,x_n)$  of all well formed formulas in the context $(x_1,\dots,x_n)$. We say that $\psi\leq \phi$ where $\phi,\psi\in LT(x_1,\dots,x_n)$ if $\psi\vdash_{\theory}\phi$, and then we quotient in the usual way to obtain a partial order on $LT(x_1,\dots,x_n)$. Given a morphism of $\mC_{\theory}$ 
\[\freccia{(x_1,\dots,x_n)}{[t_1/y_1,\dots,t_m/y_m]}{(y_1,\dots,y_m)}\]
the functor $LT_{[\vec{t}/\vec{y}]}$ acts as the substitution
$LT_{[\vec{t}/\vec{y}]}(\psi(y_1,\dots,y_m))=\psi[\vec{t}/\vec{y}]$.

The doctrine $\doctrine{\mC_{\theory}}{LT}$ is elementary exactly when $\theory$ has an equality predicate. For all the detail we refer to \cite{QCFF}, and for the case of a many sorted first order theory we refer to \cite{CLP}.

\end{enumerate}
\end{example}
Elementary doctrines form a 2-category denoted by $\ED$ where
\begin{itemize}
\item \bemph{0-cells} are elementary doctrines;
\item
a \bemph{1-cell} is a pair $(F,b)$
\[\duemorfismo{\mC}{P}{F}{b}{\mD}{R}\]
such that $\freccia{\mC}{F}{\mD}$ is a functor preserving products, and $\freccia{P}{b}{R\circ F^{\op}}$ is a natural transformation preserving the structures. More explicitly, for every object $A$ in $\mC$, the function $b_A$ preserves finite infima and
\[ b_{A\times A}(\delta_A)=R_{\angbr{F\pr_1}{F\pr_2}}(\delta_{FA}).\]
\item a \bemph{2-cell} is a natural transformation $\freccia{F}{\theta}{G}$ such that for every object $A$ in $\mC$ and every element $\alpha$ in the fibre $PA$, we have 
\[b_A(\alpha)\leq R_{\theta_A}(c_A(\alpha))\]
\end{itemize}

\section{Elementary doctrines with comprehensions}\label{sec doctrine with comp}
In \cite{QCFF,EQC,UEC} the authors intend to develop doctrines that may interpret constructive theories for mathematics. They observe that a crucial property an elementary doctrine should verify in order to sustain such interpretation relates to the axiom of comprehension.
\begin{definition}\label{def comrehension}
Let $\doctrine{\mC}{P}$ be a elementary doctrine and let $\alpha$ be an element of $P(A)$. A \bemph{comprehension} of $\alpha$ is an arrow $\freccia{X}{{\comp{\alpha}}}{A}$ of $\mC$ such that $P_{\comp{\alpha}}(\alpha)=\top_X$ and, for every $\freccia{Z}{f}{A}$ such that $P_f(\alpha)=\top_Z$, there exists a unique arrow $\freccia{Z}{g}{X}$ such that $f=\comp{\alpha} \circ g$.
\end{definition}
One says that $P$ \bemph{has comprehensions} if every $\alpha$ has a comprehension, and that $P$ \bemph{has full comprehensions} if, moreover, $\alpha\leq \beta$ in $P(A)$ whenever $\comp{\alpha}$ factors through $\comp{\beta}$.
\begin{example}\label{example subobject doctrine has comprehensions}
Let us consider the sub-objects doctrine $\doctrine{\mC}{\Sub_{\mC}}$ defined in Example \ref{example doctrines}. In this case, for every object $A$ and every $\alpha=[\subobject{B}{\alpha}{A}]$ in $\Sub_{\mC}(A)$, the comprehension $\comp{\alpha}$ is the arrow $\subobject{B}{\alpha}{A}$ in $\mC$. Moreover, the doctrine $\Sub_{\mC}$ has full comprehensions.
\end{example}
The intuition is that a comprehension morphism represents the subset of elements of the object $A$ obtained by comprehension with the predicate $\alpha$. 

In the internal language of a doctrine $\doctrine{\mC}{P}$, a comprehension of a formula $\formulaincontext{\phi (a)}{a:A}$ is a term $\termincontext{\comp{a:A \; |\; \phi(a)}(x)}{A}{x:X}$ such that
\[ \sequentincontext{\top}{\phi(\comp{a:A | \phi(a)}(x))}{x:X}\]
and any other term which this property can be obtained from $\comp{a:A\; |\; \phi(a)}(x)$ by an unique substitution.

\begin{remark}\label{remark compr has pullbacks}
For every $\freccia{A'}{f}{A}$ in $\mC$ then the mediating arrow between the comprehensions $\freccia{X}{\comp{\alpha}}{A}$ and $\freccia{X'}{\comp{P_f(\alpha)}}{A'}$ produces a pullback
\[\pullback{X'}{A'}{X}{A.}{\comp{P_f(\alpha)}}{f'}{f}{\comp{\alpha}}\]
Thus comprehensions are stable under pullbacks.
Moreover it is straightforward to verify that if $\freccia{B}{\comp{\alpha}}{A}$ is a comprehension of $\alpha$, then $\comp{\alpha}$ is monic.

Observe the stability under pullbacks of comprehensions implies that if if $\alpha\leq \beta$, where $\alpha,\beta \in P(A)$, then the unique arrow $a$ such that the following diagram commutes
\[\xymatrix@+2pc{ A_{\alpha} \ar@{-->}[d]_a\ar[rd]^{\comp{\alpha}}\\
A_{\beta} \ar[r]_{\comp{\beta}}& A}\]
is a comprehension. In particular it is the comprehension $a=\comp{P_{\comp{\beta}}(\alpha)}$, because we have that the following is a pullback
\[\pullback{X'}{A'}{X}{A}{\comp{P_{\comp{\alpha}}(\beta)}}{\comp{P_{\comp{\beta}}(\alpha)}}{\comp{\alpha}}{\comp{\beta}}\]
and since $\top_{A'}=P_{\comp{\alpha}}(\alpha)\leq P_{\comp{\alpha}}(\beta)$, we have $\comp{P_{\comp{\alpha}}(\beta)}=\id$ and then $a=\comp{P_{\comp{\beta}}(\alpha)}$.
\end{remark}

As observed in \cite{Rosolini2016RelatingQC}, to view  comprehensions  as  logical  constructors \cite{CLTT} we need to assume that a choice of comprehensions is available in the doctrine.

In details, an elementary doctrine $\doctrine{\mC}{P}$ \bemph{has a choice of comprehensions} if there is a function $\comp{-}$ assigning a comprehension $\freccia{A_{\alpha}}{\comp{\alpha}}{A}$ to every object $\alpha$ of $P(A)$.  Similarly, we say that $P$ \bemph{has a choice of full comprehensions} if $P$ has a choice of comprehensions and these are full.

\medskip
\noindent
\textbf{Notation:} since in the rest of this work we will always use doctrines with a choice of comprehensions, from now on, when we say that an elementary doctrine $P$ has comprehensions, or full comprehensions, we assume that it has a choice of comprehensions, or full comprehensions. 

\medskip
\noindent
\begin{remark}\label{remark fibration and grot. constr}
In many senses it is more general to treat
the abstract theory of the relevant structures for the present paper in terms of fibrations.
In fact, a doctrine $\doctrine{\mC}{P}$ determines a faithful fibration
 \[\freccia{\mG_P}{p_P}{\mC}\]
by a well-known, general construction due to Grothendieck, see \cite{CLTT,QCFF}. We recall very briefly that construction in the present situation. 

The data for the \bemph{total category} $\mG_P$ are:
\begin{itemize}
\item \textbf{an object} is a pair $(A,\alpha)$, where $A$ is in $\mC$ and $\alpha$ is in $P(A)$
\item \textbf{an arrow} $\freccia{(A,\alpha)}{f}{(B,\beta)}$ is an arrow $\freccia{A}{f}{B}$ of $\mC$ such that $\alpha\leq P_f(\beta)$.
\end{itemize}
The projection on the first component extends to a functor $\freccia{\mG_P}{p_P}{\mC}$ which is faithful, with a right inverse right adjoint. Setting up an appropriate 2-category for each structure (one for primary doctrines, one for faithful fibrations as above), it is easy to see that the two constructions extend to an equivalence between those 2-categories. The notions of comprehensions, full comprehensions and the requirement that comprehensions compose can be translated using the previous construction in the language of fibrations. In this case they are called respectively fibration with \emph{subset types}, \emph{full subset types} and \emph{strong coproducts}. In particular the terminology strong coproducts comes from dependent type theory, see \cite[Chapter 10]{CLTT} and \cite{FSF}.
\end{remark}

We denote by $\CE$ the 2-category of elementary doctrines with full comprehensions, and the 1-cells are those $(F,b)$ such that the functor $F$ preserves comprehensions, i.e. $F(\comp{\alpha})=\comp{b_A(\alpha)}$. The 2-cells remain the same.

We recall the construction used in \cite{EQC} to freely add comprehensions to a given elementary doctrine. Given an elementary doctrine $\doctrine{\mC}{P}$
we define the category $\mG_P$ as in Remark \ref{remark fibration and grot. constr}:
\begin{itemize}
\item \textbf{an object} of $\mG_P$ is a pair $(A, \alpha)$, where $A$ is in $\mC$ and $\alpha$ is in $P(A)$;
\item \textbf{a morphism} $\freccia{(A,\alpha)}{f}{(B,\beta)}$ is a morphism $\freccia{A}{f}{B}$ in $\mC$ such that $\alpha\leq P_{f}(\beta)$;
\end{itemize}
The functor $P$ extends to functor $\doctrine{\mG_P}{P_c}$ by setting 
\begin{itemize}
\item $P_c(A,\alpha)=\{  \gamma\, \in \, P(A) \,  | \,  \gamma\leq \alpha\}$;
\item $\freccia{P_c(B,\beta)}{P_c(f)}{P_c(A,\alpha)}$ sends $\gamma \leq \beta$ into $P_f(\gamma)\wedge \alpha$.
\end{itemize}
With these previous assignments, it is direct to check that the functor $\doctrine{\mG_P}{P_c}$ is an elementary doctrine and it has full comprehensions. In particular,
one can observe that for every object $(A,\alpha)$ of $\mG_P$ we can define 
\[ \delta_{(A,\alpha)}:=\delta_A \wedge \alpha\boxtimes \alpha\]
where $\alpha \boxtimes \alpha:= P_{\pr_1}(\alpha)\wedge P_{\pr_2}(\alpha)$, while the comprehension of an element $\alpha\in P_c(A,\beta)$ is given by the arrow $\freccia{(A,\alpha)}{\comp{\alpha}:=\id_A}{(A,\beta)}$.

\noindent
Now we prove that the assignment $P\mapsto P_c$ can be extended to 2-functor 
\[\freccia{\ED}{\funC}{\CE}\]
and we start defining how it acts on the 1-cells and 2-cells in $\ED$.

Therefore, let us consider two elementary doctrines $\doctrine{\mC}{P}$ and $\doctrine{\mD}{R}$, and consider a 1-cell $(F,b)$ of $\ED$:
\[\duemorfismo{\mC}{P}{F}{b}{\mD}{R}\]
We want to prove that the pair $(\wht{F},\wht{b})$ where:
\begin{itemize}
\item $\wht{F}(A,\alpha)$ is $(FA,b_A(\alpha))$ for every $(A,\alpha) \in \mG_P$;
\item $\wht{F}(f)$ is $F(f)$ for every $\freccia{(A,\alpha)}{f}{(B,\beta)}$;
\item $\wht{b}$ is the restriction of $b$ on $P_c$;
\end{itemize}
is a 2-cell in $\CE$:
\[\xymatrix{
\mG_P^{op} \ar[rrd]^{P_c}_{}="a" \ar[dd]_{\wht{F}^{op}}\\
&& \infsl\\
\mG_R^{op}  \ar[rru]_{R_c}^{}="b"
\ar_{\wht{b}}  "a";"b"}\]

\begin{lemma}
$(\wht{F},\wht{b})$ is a 1-cell in $\CE$.
\end{lemma}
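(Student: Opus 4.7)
The plan is to verify the four requirements for being a 1-cell in $\CE$ in turn: $\wht{F}$ is a well-defined product-preserving functor, $\wht{b}$ is a natural transformation preserving finite meets and fibered equalities, and finally $\wht{F}$ preserves the chosen comprehensions of $P_c$.

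First I would check that $\wht{F}$ is well-defined on arrows. Given $f\colon(A,\alpha)\to(B,\beta)$ in $\mG_P$ we have $\alpha\leq P_f(\beta)$, and applying $b_A$ together with the naturality identity $b_A\circ P_f=R_{F(f)}\circ b_B$ yields $b_A(\alpha)\leq R_{F(f)}(b_B(\beta))$, so $F(f)$ is a legitimate arrow $(FA,b_A(\alpha))\to(FB,b_B(\beta))$ in $\mG_R$. Functoriality is inherited from $F$. For preservation of products, the product of $(A,\alpha)$ and $(B,\beta)$ in $\mG_P$ is $(A\times B, P_{\pr_1}(\alpha)\wedge P_{\pr_2}(\beta))$ with the evident projections; since $F$ preserves products and $b$ is natural and meet-preserving, $\wht{F}$ sends this to $(FA\times FB, R_{\pr_1}(b_A(\alpha))\wedge R_{\pr_2}(b_B(\beta)))$, which is the product of $\wht{F}(A,\alpha)$ and $\wht{F}(B,\beta)$ in $\mG_R$.

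Next I would check that $\wht{b}$ is a 1-cell of elementary doctrines. Meet preservation of each $\wht{b}_{(A,\alpha)}$ is immediate since it is the restriction of $b_A$ to the down-set $\{\gamma\leq\alpha\}$. For naturality, given $\gamma\leq\beta$ in $P_c(B,\beta)$, one has $P_c(f)(\gamma)=P_f(\gamma)\wedge\alpha$, so
\[
\wht{b}_{(A,\alpha)}(P_c(f)(\gamma))=b_A(P_f(\gamma))\wedge b_A(\alpha)=R_{F(f)}(b_B(\gamma))\wedge b_A(\alpha)=R_c(\wht{F}f)(\wht{b}_{(B,\beta)}(\gamma)).
\]
For preservation of fibered equalities, recall $\delta_{(A,\alpha)}=\delta_A\wedge P_{\pr_1}(\alpha)\wedge P_{\pr_2}(\alpha)$. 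Applying $\wht{b}_{(A,\alpha)\times(A,\alpha)}$, distributing over meets, and using $b_{A\times A}(\delta_A)=R_{\angbr{F\pr_1}{F\pr_2}}(\delta_{FA})$ together with naturality of $b$ on the projections gives exactly $\delta_{\wht{F}(A,\alpha)}$ in $R_c(\wht{F}(A,\alpha)\times\wht{F}(A,\alpha))$.

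Finally, comprehensions in $P_c$ and $R_c$ have the very simple form recalled before the lemma: for $\gamma\leq\alpha$ in $P_c(A,\alpha)$ the chosen comprehension is $\comp{\gamma}=\id_A\colon(A,\gamma)\to(A,\alpha)$, and likewise in $R_c$. Hence $\wht{F}(\comp{\gamma})=F(\id_A)=\id_{FA}=\comp{b_A(\gamma)}=\comp{\wht{b}_{(A,\alpha)}(\gamma)}$, so $\wht{F}$ strictly preserves comprehensions. None of these steps is really an obstacle; the only mildly delicate point is the fibered-equality calculation, which is a routine distribution of $b_{A\times A}$ over the defining meet of $\delta_{(A,\alpha)}$ combined with the hypothesis that $(F,b)$ already preserves $\delta$ in $\ED$.
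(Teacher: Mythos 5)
Your proposal is correct and follows essentially the same route as the paper: the key steps are the same computation showing $\wht{b}$ carries $\delta_{(A,\alpha)}=\delta_A\wedge\alpha\boxtimes\alpha$ to the reindexing of $\delta_{\wht{F}(A,\alpha)}$ along $\angbr{F\pr_1}{F\pr_2}$ (using meet-preservation, naturality of $b$ on projections, and the hypothesis $b_{A\times A}(\delta_A)=R_{\angbr{F\pr_1}{F\pr_2}}(\delta_{FA})$), and the observation that comprehensions in $\mG_P$ are identities on underlying objects so $\wht{F}$ preserves them on the nose. You spell out the routine parts (well-definedness on arrows, products, naturality of $\wht{b}$) that the paper dismisses as direct, which is fine.
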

\begin{proof}
It is direct to show that $\freccia{\mG_P}{\wht{F}}{\mG_R}$ is a preserving products functor and that $\wht{b}$ is a natural transformation. First we show that $(\wht{F},\wht{b})$ is a 1-cell of elementary doctrines, and then we show that it preserves comprehensions.
Hence, we start observing that
\[(R_c)_{\angbr{F(\pr_1)}{F(\pr_2)}}(\delta_{(FA,b_A(\alpha))})=R_{\angbr{F(\pr_1)}{F(\pr_2)}}(b_A(\alpha)\boxtimes b_A(\alpha)\wedge \delta_{FA})\wedge b_{A\times A}(\alpha \boxtimes \alpha) \]
which is equal to
\[ R_{\angbr{F(\pr_1)}{F(\pr_2)}}(R_{\pr_1'}(b_A(\alpha))\wedge R_{\pr_2'}(b_A(\alpha)))\wedge b_{A\times A}(\delta_A)\wedge b_{A\times A}(\alpha \boxtimes \alpha)     \]
where  $\freccia{FA \times FA}{\pr_i'}{FA}$. Moreover, since  $b$ is a natural transformation, the diagram
\[ \comsquarelargo{PA}{P(A\times A)}{RFA}{RF(A\times A).}{P_{\pr_i}}{b_A}{b_{A\times A}}{R_{F(\pr_i)}}\]
commutes. This implies that 
\[(R_c)_{\angbr{F(\pr_1)}{F(\pr_2)}}(\delta_{(FA,b_A(\alpha))})=b_{A\times A}(P_{\pr_1}(\alpha)\wedge P_{\pr_2}(\alpha))\wedge b_{A\times A}(\delta_A)\wedge b_{A\times A}(\alpha \boxtimes \alpha)\]
and then
\[b_{A\times A}(P_{\pr_1}(\alpha)\wedge P_{\pr_2}(\alpha))=b_{A\times A}(\alpha \boxtimes \alpha).\]
Therefore, we conclude that $(\wht{F},\wht{b})$ is a 1-cell of elementary doctrines since
\[ \wht{b}_{(A,\alpha)\times (A,\alpha
)}(\delta_{(A,\alpha)})= b_{A\times A}(\delta_A\wedge \alpha \boxtimes \alpha)=(R_c)_{\angbr{F(\pr_1)}{F(\pr_2)}}(\delta_{\wht{F}(A,\alpha)}).\]
Finally, it is easy to see that $(\wht{F},\wht{b})$ preserves comprehensions since every comprehension in $\mG_P $ is of the form
\[  \freccia{(A,\gamma)}{\comp{\gamma}}{(A,\alpha)}\]
where $\gamma\in P_c(A,\alpha)$, and $\comp{\gamma}$ is the identity on $A$. Then the arrow
\[ \freccia{(FA,b_A(\gamma))}{F(\comp{\gamma})}{(FA,b_A(\alpha))}\]
is $\id_{FA}$ by definition of $\wht{F}$, so it is the comprehension of $b_A(\gamma)$. 
\end{proof}
\begin{lemma}\label{prop C is well def on 2-cell}
Let $(F,b)$ and $(G,c)$ be two objects in $\ED(P,R)$ and let $\freccia{(F,b)}{\theta}{(G,c)}$ be a 2-cell in $\ED$. We define 
\[ \freccia{(\wht{F},\wht{b})}{\wht{\theta}}{(\wht{G},\wht{c})}\]
where
\[\freccia{(FA,b_A(\alpha))}{\wht{\theta}_{(A,\alpha)}}{(GA,c_A(\alpha))}\]
is $\theta_A$. Then it is a 2-cell in $\CE$.
\end{lemma}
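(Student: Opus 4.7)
My plan is to unwind the definitions and check three things in turn: that each component of $\wht{\theta}$ is a legitimate arrow in $\mG_R$, that the family is natural, and that the 2-cell inequality of $\CE$ (which is the one inherited from $\ED$) holds.

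First I would verify well-definedness of the components. An arrow $(FA,b_A(\alpha))\to(GA,c_A(\alpha))$ in $\mG_R$ is an arrow $FA\to GA$ in $\mD$ such that $b_A(\alpha)\leq R_{\theta_A}(c_A(\alpha))$. But this is exactly the 2-cell condition on $\theta$ in $\ED$ evaluated at $\alpha\in P(A)$, so $\wht{\theta}_{(A,\alpha)}:=\theta_A$ is a morphism of $\mG_R$.

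Next I would check naturality. For $\freccia{(A,\alpha)}{f}{(B,\beta)}$ in $\mG_P$, the naturality square for $\wht{\theta}$ is the image under $\wht{F}$-vs.-$\wht{G}$ of the naturality square of $\theta$ at $f$, so it commutes as an equation of underlying morphisms in $\mD$; the extra inequalities required by arrows of $\mG_R$ are already guaranteed by the previous step.

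Finally I would verify the 2-cell condition in $\CE$. For $\gamma\in P_c(A,\alpha)$, i.e.\ $\gamma\leq\alpha$ in $P(A)$, I need
\[\wht{b}_{(A,\alpha)}(\gamma)\leq (R_c)_{\wht{\theta}_{(A,\alpha)}}(\wht{c}_{(A,\alpha)}(\gamma)),\]
which by the definitions of $P_c$, $R_c$, $\wht{b}$, $\wht{c}$ unravels to $b_A(\gamma)\leq R_{\theta_A}(c_A(\gamma))\wedge b_A(\alpha)$. The first conjunct holds by the 2-cell condition on $\theta$ applied at $\gamma$, and the second follows from $\gamma\leq\alpha$ and monotonicity of $b_A$. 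I do not expect any real obstacle: everything is a direct translation of the $\ED$-level 2-cell data across the Grothendieck construction, with the proviso that the reindexing in $R_c$ introduces the extra meet with $b_A(\alpha)$, which is absorbed by the monotonicity argument above.
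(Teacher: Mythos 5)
Your proposal is correct and follows essentially the same route as the paper's proof: checking that each $\theta_A$ is an arrow of $\mG_R$ via the 2-cell inequality at $\alpha$, and then reducing the $\CE$-level condition to $b_A(\gamma)\leq R_{\theta_A}(c_A(\gamma))\wedge b_A(\alpha)$, settled by the 2-cell condition at $\gamma$ together with $\gamma\leq\alpha$ and monotonicity of $b_A$. The only difference is that you spell out the naturality check, which the paper leaves implicit.
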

\begin{proof}
Let $(A,\alpha)$ be an object of $\mG_P$. First, recall that we have that $b_A(\alpha)\leq R_{\theta_A}(c_A(\alpha))$ because $\theta$ is a 2-morphism. Therefore
\[\freccia{(FA,b_A(\alpha))}{\theta_A}{(GA,c_A(\alpha))}\]
is a morphism in $\mG_R$. Now, consider an element $\gamma$ of $ P_c(A,\alpha)$. Then 
\[ (R_c)_{\theta_A}(\wht{c}_A(\gamma))=R_{\theta_A}(c_A(\gamma))\wedge b_A(\alpha)
\]
by definition of the functor $R_c$. Finally, observe that $b_A(\gamma)\leq b_A(\alpha)$ since $\gamma \in P_c(A,\alpha)$, and $b_A(\gamma)\leq R_{\theta_A}(c_A(\gamma))$, and then we can conclude that
\[ \wht{b}_A(\gamma)=b_A(\gamma)\leq R_{\theta_A}(c_A(\gamma))\wedge b_A(\alpha)= (R_c)_{\theta_A}(\wht{c}_A(\gamma)).\]

\end{proof}
The previous results allow to conclude the following proposition.
\begin{proposition}
The assignment 
\[\freccia{\ED(P,R)}{\funC_{P,R}}{\CE(P_c,R_c)}\]
which maps $(F,b)$ into $(\wht{F},\wht{b})$ and a 2-cell $\freccia{(F,b)}{\theta}{(G,c)}$ into $\freccia{(\wht{F},\wht{b})}{\wht{\theta}}{(\wht{G},\wht{c})}$ is a functor and
\[\freccia{\ED}{\funC}{\CE}\]
is a 2-functor with the assignment $\funC(P)=P_c$.
\end{proposition}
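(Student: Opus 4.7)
The plan is to build on the two preceding lemmas, which already establish that the assignments on 1-cells and 2-cells land in $\CE$ and $\CE(P_c,R_c)$ respectively; what remains is purely the verification of functoriality and 2-functoriality, and the key observation is that every piece of data in the construction is defined either by restriction (for $\wht{b}$, $\wht{c}$) or by direct reuse of the underlying components (for $\wht{F}$ on arrows and for $\wht{\theta}$), so every required identity reduces to the corresponding identity for the original 1-cells and 2-cells in $\ED$.

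First I would check that $\funC_{P,R}$ is a functor. For the identity 2-cell $\id_{(F,b)}$, its components are $\id_{FA}$, and by the definition of $\wht{\theta}$ in Lemma \ref{prop C is well def on 2-cell} these give exactly the components $\id_{\wht{F}(A,\alpha)} = \id_{(FA,b_A(\alpha))}$, so $\wht{\id_{(F,b)}} = \id_{(\wht{F},\wht{b})}$. For vertical composition, given $\theta\colon (F,b)\Rightarrow (G,c)$ and $\sigma\colon (G,c)\Rightarrow (H,d)$, the component of $\sigma\verticalcomp \theta$ at $A$ is $\sigma_A\circ \theta_A$, and the component of $\wht{\sigma}\verticalcomp \wht{\theta}$ at $(A,\alpha)$ is $\sigma_A\circ \theta_A$ by construction; hence $\funC_{P,R}$ preserves vertical composition.

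Next I would check that $\funC$ preserves identity 1-cells and horizontal composition. For the identity 1-cell $(\id_\mC,\id_P)$ one sees directly from the definitions $\wht{\id_\mC}(A,\alpha)=(A,\alpha)$ and $\wht{\id_P}$ is the restriction of $\id_P$, which is the identity on $P_c$; thus $\funC(\id_P)=\id_{P_c}$. For composition, given $(F,b)\colon P\to R$ and $(F',b')\colon R\to S$, one checks componentwise that $\wht{F'\circ F}(A,\alpha)=(F'FA, b'_{FA}(b_A(\alpha)))$ coincides with $\wht{F'}\wht{F}(A,\alpha)$, and similarly that $\wht{b'\circ b}$ (meaning the natural transformation composition) restricts to $\wht{b'}\circ \wht{b}$, which is just the associativity of composition of natural transformations restricted to the sub-fibres $P_c(A,\alpha)$. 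The analogous check for horizontal composition of 2-cells $\freccia{F}{\theta}{G}$ with $\freccia{F'}{\theta'}{G'}$ reduces immediately to the corresponding identity at each component, since $\wht{\theta'\parallelcomp \theta}_{(A,\alpha)} = (\theta'\parallelcomp \theta)_A$ by definition.

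There are no real obstacles here: the only mildly nontrivial observation needed is that every morphism in the comma-like total category $\mG_P$ is a morphism in $\mC$ satisfying an inequality condition, so an assignment defined on underlying arrows in $\mC$ automatically behaves functorially at the level of $\mG_P$, provided well-definedness has been checked --- and this was precisely the content of the two preceding lemmas. Collecting these verifications yields that $\funC_{P,R}$ is a functor for every $P,R$ and that these functors assemble into a 2-functor $\freccia{\ED}{\funC}{\CE}$ with $\funC(P)=P_c$, as asserted.
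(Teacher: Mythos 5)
Your proposal is correct and follows the same route as the paper, which simply asserts that the proposition follows from the two preceding lemmas; you have written out the routine componentwise verifications (identities, vertical and horizontal composition, composition of 1-cells) that the paper leaves implicit. Nothing is missing: since $\wht{F}$, $\wht{b}$ and $\wht{\theta}$ are all defined by reusing or restricting the underlying data, every functoriality identity does reduce to the corresponding identity in $\ED$, exactly as you argue.
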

\noindent
Now we prove that the 2-functor $\freccia{\ED}{\funC}{\CE}$ is 2-left adjoint to the forgetful functor $\freccia{\CE}{\funU}{\ED}$. So, we start by defining the unit and counit morphisms.

First, observe that for every elementary doctrine $P$ there is a natural embedding  
\[\duemorfismo{\mC}{P}{I_P}{{i_P}}{\mG_P}{{P_c}}\]
of elementary doctrines, where $\freccia{\mC}{I_P}{\mG_P}$ acts as $A\mapsto (A,\top_A)$, and the morphism $\freccia{P(A)}{(i_P)_A}{P_c(A,\top_A)}$ sends $\alpha\mapsto \alpha$. These 1-cells will give the unit of the 2-adjunction.

In order to define the counit, let us consider an elementary doctrine $P$ with full comprehensions. We can define a morphism

\[\duemorfismo{\mG_P}{{P_c}}{J_P}{{j_P}}{\mC}{P}\]
in $\CompAdj$ as follow: the functor $\freccia{\mG_P}{J_P}{\mC}$ sends an object $(A,\alpha)$ of $\mG_P$ to the object $A_{\alpha}$, where $A_{\alpha}$ is the domain of the comprehension $\freccia{A_{\alpha}}{\comp{\alpha}}{A}$. Given an arrow $\freccia{(A,\alpha)}{f}{(B,\beta)}$ in $\mG_P$, the arrow $\freccia{A_{\alpha}}{J_P(f)}{B_{\beta}}$ is given by the vertical arrow $(\comp{\beta}^{\ast}f) a$ of the following diagram:
\begin{equation}\label{diagram J(f)}
\xymatrix@+2pc{
A_{\alpha} \ar[rd]^{\comp{\alpha}}\ar@{-->}[d]_a\\
A_{P_f(\beta)}\ar[d]_{\comp{\beta}^{\ast}f} \ar[r]^{ \comp{P_f(\beta)}} & A\ar[d]^f\\
B_{\beta} \ar[r]_{\comp{\beta}}& B
}
\end{equation}
where $\freccia{A_{\alpha}}{a}{D}$ exists because $\alpha\leq P_f(\beta)$. Observe that by Remark \ref{remark compr has pullbacks} we have that  $a=\comp{P_{\comp{P_f(\beta)}}(\alpha)}$, and then $J_P(f)=(\comp{\beta}^{\ast}f)\comp{P_{\comp{P_f(\beta)}}(\alpha)}$. The natural transformation $j_P$ is defined by the following components: for every $(A,\alpha)$ of $\mG_P$ the arrow $\freccia{\comprehensiondoctrine{P}(A,\alpha)}{j_{(A,\alpha)}}{P(A_{\alpha})}$ acts as $\gamma\mapsto P_{\comp{\alpha}}(\gamma)$.
\begin{lemma}
With the previous assignments, $\freccia{P_c}{(J_P,j_P)}{P}$ is a 1-cell of $\CE$.
\end{lemma}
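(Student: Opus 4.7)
The plan is to verify that $(J_P,j_P)$ satisfies the four conditions defining a 1-cell of $\CE$: (i) $J_P$ is a functor preserving finite products, (ii) $j_P$ is a natural transformation of the appropriate shape, (iii) the pair preserves the fibered equality, and (iv) $J_P$ preserves comprehensions. The main tool throughout is diagram (\ref{diagram J(f)}) defining $J_P(f)$, together with Remark \ref{remark compr has pullbacks} on the stability of comprehensions under pullback.

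First, I would check functoriality of $J_P$. Given composable morphisms $(A,\alpha)\xrightarrow{f}(B,\beta)\xrightarrow{g}(C,\gamma)$ in $\mG_P$, both $\comp{\gamma}\circ J_P(g\circ f)$ and $\comp{\gamma}\circ J_P(g)\circ J_P(f)$ equal $(g\circ f)\circ \comp{\alpha}$ by the two instances of the commuting square in diagram (\ref{diagram J(f)}), and the uniqueness clause in the comprehension universal property forces $J_P(g\circ f)=J_P(g)\circ J_P(f)$. Preservation of finite products follows from Remark \ref{remark compr has pullbacks}: the binary product in $\mG_P$ is $(A\times B,\alpha\boxtimes\beta)$, and pullback stability identifies $(A\times B)_{\alpha\boxtimes\beta}$ with $A_\alpha\times B_\beta$, making $J_P$ product-preserving.

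Next, for naturality of $j_P$ on an arrow $f\colon (A,\alpha)\to (B,\beta)$ and $\eta\in P_c(B,\beta)$, the required identity is
\[P_{J_P(f)}(P_{\comp{\beta}}(\eta))=P_{\comp{\alpha}}(P_f(\eta)\wedge \alpha).\]
Reading off diagram (\ref{diagram J(f)}) gives the commuting square $\comp{\beta}\circ J_P(f)=f\circ \comp{\alpha}$, so the left-hand side equals $P_{\comp{\alpha}}(P_f(\eta))$; on the right-hand side, $P_{\comp{\alpha}}$ preserves binary meets and $P_{\comp{\alpha}}(\alpha)=\top$ by definition of comprehension, so the two sides agree. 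Preservation of the fibered equality reduces to an explicit computation using $\delta_{(A,\alpha)}=\delta_A\wedge \alpha\boxtimes\alpha$: pulling back along the comprehension of $\alpha\boxtimes\alpha$, which by the product-preservation step is identified with $\comp{\alpha}\times\comp{\alpha}\colon A_\alpha\times A_\alpha\to A\times A$, one checks that $P_{\comp{\alpha}\times\comp{\alpha}}(\delta_A\wedge\alpha\boxtimes\alpha)=\delta_{A_\alpha}$.

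Finally, preservation of comprehensions is the cleanest step. Every comprehension in $\mG_P$ has the form $\id_A\colon (A,\eta)\to (A,\alpha)$ with $\eta\leq \alpha$. Applying diagram (\ref{diagram J(f)}) with $f=\id_A$ the pullback degenerates to $A_\alpha$ itself, and Remark \ref{remark compr has pullbacks} yields $J_P(\id_A)=\comp{P_{\comp{\alpha}}(\eta)}$, which is precisely the comprehension of $j_{(A,\alpha)}(\eta)=P_{\comp{\alpha}}(\eta)$ in $\mC$. The main obstacle in the overall verification is the product-preservation step, since making the canonical pullback isomorphism strictly compatible with the chosen comprehensions requires care; notably, the fullness of comprehensions on $P$ is not needed anywhere in this lemma, only their existence as a choice.
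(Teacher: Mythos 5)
Your proof follows essentially the same route as the paper's: functoriality of $J_P$ via the uniqueness clause in the universal property of comprehensions, naturality of $j_P$ from the commuting square $\comp{\beta}\circ J_P(f)=f\circ\comp{\alpha}$ together with $P_{\comp{\alpha}}(\alpha)=\top$, and preservation of comprehensions via Remark \ref{remark compr has pullbacks}; all of this is correct. You additionally spell out the product-preservation and $\delta$-preservation checks that the paper dismisses as ``direct'', which is welcome, though note that the key identity $P_{\comp{\alpha}\times\comp{\alpha}}(\delta_A)=\delta_{A_{\alpha}}$ in your third step is asserted rather than derived (only the inequality $\delta_{A_{\alpha}}\leq P_{\comp{\alpha}\times\comp{\alpha}}(\delta_A)$ is immediate from the adjunction defining $\delta$), so your closing aside that fullness is nowhere needed should be treated with some caution.
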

\begin{proof} 
First we prove that $\freccia{\mG_P}{J_P}{\mC}$ is a functor. Let us consider two arrows $\frecciasopra{(A,\alpha)}{f}{(B,\beta)}$ and $\frecciasopra{(B,\beta)}{g}{(C,\gamma)}$ of $\mG_P$. We need to prove that $J_P(gf)=J_P(g)J_P(f)$.
Let us consider the following diagrams
\[
\xymatrix@+2pc{
A_{\alpha} \ar[rd]^{\comp{\alpha}}\ar@{-->}[d]_a & & B_{\beta}\ar[rd]^{\comp{\beta}}\ar@{-->}[d]_b &&A_{\alpha} \ar[rd]^{\comp{\alpha}}\ar@{-->}[d]_c\\
A_{P_f(\beta)}\ar[d]_{\comp{\beta}^{\ast}f} \ar[r]^{ \comp{P_f(\beta)}} & A\ar[d]^f& B_{P_g(\gamma)}\ar[d]_{\comp{\gamma}^{\ast}g} \ar[r]^{ \comp{P_g(\gamma)}} & B\ar[d]^g &A_{P_{gf}(\beta)}\ar[d]_{\comp{\gamma}^{\ast}(gf)} \ar[r]^{ \comp{P_{gf}(\gamma)}} & A\ar[d]^{gf}\\
B_{\beta} \ar[r]_{\comp{\beta}}& B & C_{\gamma} \ar[r]_{\comp{\gamma}}& C
&C_{\gamma} \ar[r]_{\comp{\gamma}}& C}
\]
which are, $J_P(f)$, $J_P(g)$ and $J_P(gf)$ respectively. We have that $J_P(g)J_P(f)=(\comp{\gamma}^{\ast}g)b(\comp{\beta}^{\ast}f)a$ and $J_P(gf)=c(\comp{\gamma}^{\ast}(gf))$. Now notice that the following diagram 

\[\xymatrix@+2pc{
A_{\alpha} \ar@/_2pc/[ddr]_{J_P(g)J_P(f)} \ar@/^2pc/[rrd]^{\comp{\alpha}}\ar@{-->}[dr]_c\\
&A_{P_{gf}(\beta)}\ar[d]_{\comp{\gamma}^{\ast}(gf)} \ar[r]^{ \comp{P_{gf}(\gamma)}} & A\ar[d]^{gf}\\
&C_{\gamma} \ar[r]_{\comp{\gamma}}& C
}\]
commutes because $$\comp{\gamma}J_P(g)J_P(f)=\comp{\gamma}(\comp{\gamma}^{\ast}g)b(\comp{\beta}^{\ast}f)a=g\comp{P_g(\gamma)}b(\comp{\beta}^{\ast}f)a=g\comp{\beta}(\comp{\beta}^{\ast}f)a$$
and then 
$$\comp{\gamma}J_P(g)J_P(f)= gf\comp{P_f(\beta)}a=gf\comp{\alpha}.$$
Then we have that
\[J_P(g)J_P(f)=J_P(gf).\] 
Moreover, it is direct to see that $J_P(\id_{(A,\alpha)}) =\id_{A_{\alpha}}$. Hence $\freccia{\mG_P}{J_P}{\mC}$ is a functor, and it is direct to show that it preserves finite products.
Now show the naturality of this assignment $j_p$.

Let $\freccia{(A,\alpha)}{f}{(B,\beta)}$ be an arrow of $\mG_P$, we have that the diagram 
\[\xymatrix@+2pc{
\comprehensiondoctrine{P}(B,\beta)\ar[d]_{j_{(B,\beta)}}\ar[r]^{\comprehensiondoctrine{P}(f)} & \comprehensiondoctrine{P}(A,\alpha)\ar[d]^{j_{(A,\alpha)}}\\
P(B_{\beta})\ar[r]_{P_{J(f)}} & P(A_{\alpha})
}\]
commutes because if $\gamma\in \comprehensiondoctrine{P}(B,\beta)$, we have that
\[P_{J_P(f)}(j_{(B,\beta)}(\gamma))=P_{(\comp{\beta}^{\ast}f)a}(P_{\comp
{\beta}}(\gamma))
\]
and by definition, this is equal to $P_{f\comp{\alpha}}(\gamma)$, which is exactly $j_{(A,\alpha)}(\comprehensiondoctrine{P}(f)(\gamma))$. Therefore $\freccia{\comprehensiondoctrine{P}}{j_P}{PJ^{\op}}$ is a natural transformation. Finally, we have to prove that the functor $J_P$ preserves comprehensions. Observe that every comprehension of an element $\alpha\in P_c(A,\beta)$ is of the form $\freccia{(A,\alpha)}{\comp{\alpha}=\id_A}{(A,\beta)}$, and then  $J_P(\comp{\alpha})= a$ where $a$ is the arrow such that the diagram
\[\xymatrix@+2pc{ A_{\alpha} \ar@{-->}[d]_a\ar[rd]^{\comp{\alpha}}\\
A_{\beta} \ar[r]_{\comp{\beta}}& A}\]
commutes. By Remark \ref{remark compr has pullbacks}, we have that $J_P(\comp{\alpha})=a=\comp{P_{\comp{\beta}}(\alpha)}$, and this is exactly the comprehension of $j_{(A,\beta)}(\alpha)=P_{\comp{\beta}}(\alpha)$. Therefore we have proved that $(J_P,j_P)$ is an 1-cell of $\CE$.
\end{proof}

\begin{theorem}\label{theorem comprehension completion}
The 2-functor $\freccia{\ED}{\funC}{\CE}$ is 2-left adjoint to the forgetful functor $\freccia{\CE}{\funU}{\ED}$. The unit of this 2-adjunction $\freccia{\id_{\PD}}{\eta}{\funU\funC}$ is given by $\eta_P=(I_P,i_P)$ and the counit $\freccia{\funC\funU}{\varepsilon}{\id_{\CompAdj}}$ is given by $\varepsilon_P=(J_P,j_P)$.
\end{theorem}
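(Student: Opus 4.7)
The plan is to verify the 2-adjunction $\funC \dashv \funU$ directly by establishing that $\eta$ and $\varepsilon$ are 2-natural transformations satisfying the triangle identities, and then observing that, because all constructions commute with vertical composition of 2-cells, the adjunction lifts automatically from the 1-categorical to the 2-categorical setting. The preceding lemmas already supply the components $\eta_P=(I_P,i_P)$ as a 1-cell in $\ED$ and $\varepsilon_P=(J_P,j_P)$ as a 1-cell in $\CE$, so what remains is essentially bookkeeping, modulo one point that deserves care.

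First I would check 2-naturality of $\eta$. For a 1-cell $(F,b)\:P\to Q$ in $\ED$, both composites $\funU\funC(F,b)\circ\eta_P$ and $\eta_Q\circ(F,b)$ send an object $A$ of the base category to $(FA,\top_{FA})$ and an element $\alpha\in P(A)$ to $b_A(\alpha)$; for a 2-cell $\theta\:(F,b)\Rightarrow (G,c)$, the equality $\widehat{\theta}\ast\eta_P=\eta_Q\ast\theta$ follows immediately from the definition of $\widehat{\theta}$ given in Lemma \ref{prop C is well def on 2-cell}. For 2-naturality of $\varepsilon$, given a 1-cell $(F,b)\:R\to S$ in $\CE$, the key fact is that $F$ preserves the chosen comprehensions, so $F(A_\alpha)=(FA)_{b_A(\alpha)}$ and $F(\comp{\alpha})=\comp{b_A(\alpha)}$; this identifies the two composites on objects, and the formula $J_P(f)=(\comp{\beta}^{\ast}f)\comp{P_{\comp{P_f(\beta)}}(\alpha)}$ from diagram (\ref{diagram J(f)}) is preserved by $F$ by uniqueness of the mediating arrow. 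Naturality in 2-cells is analogous.

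The heart of the argument is the triangle identities. For $\funU(\varepsilon_R)\circ\eta_{\funU R}=\id_{\funU R}$ with $R\in\CE$: the composite sends $A\mapsto(A,\top_A)\mapsto A_{\top_A}$; since $\id_A$ satisfies the universal property of a comprehension for $\top_A$, choosing comprehensions canonically forces $A_{\top_A}=A$ and $\comp{\top_A}=\id_A$, and the action on predicates is $R_{\id_A}=\id_{R(A)}$. For $\varepsilon_{\funC P}\circ\funC(\eta_P)=\id_{\funC P}$: by the construction of $\widehat{I_P}$, an object $(A,\alpha)\in\mG_P$ maps to $((A,\top_A),\alpha)\in\mG_{P_c}$, and $J_{P_c}$ then picks out the comprehension of $\alpha\in P_c(A,\top_A)$, which by the explicit description given just after the definition of $P_c$ is the arrow $\id_A\:(A,\alpha)\to(A,\top_A)$ with domain $(A,\alpha)$; so the composite is the identity 1-cell on $P_c$. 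The corresponding equality for the natural transformations $i$ and $j$ reduces to $P_{\id}=\id$.

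The main obstacle is the strictness of the triangle identities, which relies entirely on having a \emph{choice} of comprehensions available and on that choice being canonical in the sense that $\comp{\top_A}=\id_A$; without such a choice one would only obtain a biadjunction. This is exactly why the standing notational convention was fixed just before Remark \ref{remark fibration and grot. constr}, and, as the rest of the paper will exploit, it is also what allows the associated 2-monad $\mT_c=\funU\funC$ to be colax-idempotent and hence property-like.
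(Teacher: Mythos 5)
Your proposal is correct and follows essentially the same route as the paper, whose proof simply asserts that the 2-naturality of $\eta$ and $\varepsilon$ and the triangle identities are ``direct to verify''; you have carried out that verification explicitly. Your closing observation that the strict identity $\funU\varepsilon\circ\eta\funU=\id_{\funU}$ depends on the chosen comprehension of $\top_A$ being $\id_A$ (not merely on having \emph{some} choice) is a genuine subtlety the paper leaves implicit, remarking only that dropping the choice altogether degrades $\mT_c$ to a pseudo-monad.
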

\begin{proof}
It is direct to verify that $\varepsilon$ and $\eta$ are 2-natural transformations and that $\id_{\funC}=\varepsilon\funC\circ \funC\eta$ and that $\id_{\funU}=\funU \varepsilon\circ \eta\funU$. 
\end{proof}
The 2-adjunction of Theorem  \ref{theorem comprehension completion} induces a 2-monad $\freccia{\PD}{\gromonad}{\PD}$, whose unit is given by the unit of the 2-adjunction, and whose multiplication is defined by $\mu=\varepsilon \funC$, as in 1-dimensional case.
Now we show that 2-monad $\gromonad$ is \bemph{colax-idempotent}, and that we have the equivalence of 2-categories
$$\alg{\gromonad}\equiv \CE .$$
This means that for an elementary doctrine, the structure of \emph{doctrine with full comprehensions} is a more than a structure: it is a \emph{property} in the sense of \cite{OPLS}.

We start by showing that every elementary doctrine with full comprehensions is  a $\gromonad$-algebra . 
\begin{proposition}\label{proposition P has comp imples T-alg
}
Let $\doctrine{\mC}{P}$ be an elementary doctrine of $\CompAdj$, then $(P,\varepsilon_P)$ is a $\gromonad$-algebra.
\end{proposition}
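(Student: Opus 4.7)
The plan is to verify the two $\gromonad$-algebra axioms for the pair $(P,\varepsilon_P)$, namely the unit law $\varepsilon_P\verticalcomp \eta_P=\id_P$ and the multiplication law $\varepsilon_P\verticalcomp\mu_P=\varepsilon_P\verticalcomp\gromonad\varepsilon_P$, where $\mu_P=\varepsilon_{\gromonad P}$ and $\gromonad=\funU\funC$. Both reduce to a bookkeeping verification once the explicit descriptions of $\funC$, $(I_P,i_P)$, and $(J_P,j_P)$ are unwound; the substantive input is the strictness supplied by the choice of full comprehensions together with Remark~\ref{remark compr has pullbacks}.

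For the unit law I would first compute $J_P\circ I_P$ on objects: $A\mapsto(A,\top_A)\mapsto A_{\top_A}$. Any comprehension of $\top_A$ is forced by Definition~\ref{def comrehension} to be an isomorphism, so the choice of comprehensions can be normalized so that $\comp{\top_A}=\id_A$ and $A_{\top_A}=A$. With this normalization the underlying functor is the identity on objects; on arrows, the diagram \eqref{diagram J(f)} defining $J_P(I_P(f))$ collapses to $f$ itself because every comprehension appearing in it is an identity. On fibres, $(j_P)_{(A,\top_A)}\verticalcomp(i_P)_A$ sends $\alpha\in P(A)$ to $P_{\comp{\top_A}}(\alpha)=\alpha$, so the natural-transformation component is also the identity.

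For the multiplication law I would first describe $\gromonad^2 P=\funC(P_c)$ explicitly: the total category $\mG_{P_c}$ has objects triples $((A,\alpha),\gamma)$ with $\gamma\le\alpha$ in $P(A)$, and by construction the comprehension of $\gamma\in P_c(A,\alpha)$ inside $P_c$ is the identity $\id_A\colon(A,\gamma)\to(A,\alpha)$. Consequently $\mu_P=\varepsilon_{\gromonad P}$ acts on objects as $((A,\alpha),\gamma)\mapsto(A,\gamma)$, so $\varepsilon_P\verticalcomp\mu_P$ sends $((A,\alpha),\gamma)$ to $A_\gamma$. On the other hand $\gromonad\varepsilon_P=(\wht{J_P},\wht{j_P})$ sends $((A,\alpha),\gamma)$ to $(A_\alpha,P_{\comp{\alpha}}(\gamma))$, hence $\varepsilon_P\verticalcomp\gromonad\varepsilon_P$ sends it to the domain of $\comp{P_{\comp{\alpha}}(\gamma)}$. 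Remark~\ref{remark compr has pullbacks} identifies $\comp{P_{\comp{\alpha}}(\gamma)}$ with the canonical factorization $A_\gamma\to A_\alpha$ of $\comp{\gamma}$ through $\comp{\alpha}$, so in particular $\comp{\alpha}\circ\comp{P_{\comp{\alpha}}(\gamma)}=\comp{\gamma}$; this gives the strict equality of domains required. The same identity, combined with functoriality of $P$, matches the fibre components: both composites send $\delta\le\gamma$ to $P_{\comp{\gamma}}(\delta)$, once directly and once as $P_{\comp{P_{\comp{\alpha}}(\gamma)}}(P_{\comp{\alpha}}(\delta))$.

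The point I expect to need most care with is keeping straight which layer of comprehension is which: those inside $P_c$ are normalized to identities, whereas those inside $P$ are genuine subobject inclusions, and the associativity axiom is exactly the statement that taking a subobject in two stages agrees with taking it in one. The strict equality of underlying objects, rather than a mere canonical isomorphism, is the entire reason $(P,\varepsilon_P)$ qualifies as a strict $\gromonad$-algebra; this is what Remark~\ref{remark compr has pullbacks} and the hypothesis of a fixed choice of full comprehensions buy us. Once these strict identifications are in place, both algebra diagrams commute on the nose, and no further computation is needed.
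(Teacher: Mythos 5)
Your verification is correct, but it takes a different route from the paper. The paper's proof is the standard one-line formal argument: since $\mu_P=\varepsilon_{\funC P}$ and $\freccia{\gromonad}{\varepsilon}{\id_{\CE}}$ is a 2-natural transformation, the associativity square is just (the image under $\funU$ of) the naturality square of $\varepsilon$ at the 1-cell $\varepsilon_P$, and the unit law is one of the triangle identities of the 2-adjunction of Theorem~\ref{theorem comprehension completion}. This is the general fact that $(\funU A,\funU\varepsilon_A)$ is an algebra for the monad induced by any adjunction, and it costs nothing once the adjunction is in place. What you do instead is unwind $\funC$, $(I_P,i_P)$ and $(J_P,j_P)$ and check the two diagrams by hand on objects, arrows and fibres. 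Your computation is sound (in particular the identification $(A_\alpha)_{P_{\comp{\alpha}}(\gamma)}=A_\gamma$ via Remark~\ref{remark compr has pullbacks}, and the fibre identity $P_{\comp{P_{\comp{\alpha}}(\gamma)}}\circ P_{\comp{\alpha}}=P_{\comp{\gamma}}$), and it is essentially the content that the paper defers to the "direct to verify" claims of 2-naturality and the triangle identities in Theorem~\ref{theorem comprehension completion}; so your argument makes explicit the coherence of the chosen comprehensions (e.g.\ the normalization $\comp{\top_A}=\id_A$) on which the strictness of the algebra secretly depends, at the price of redoing work the formal argument gets for free. Only minor gap: for the multiplication law you check objects and fibres but not the arrow components of the two composite functors; these do follow, by monicity of comprehensions, from the object-level identifications you establish, but it is worth saying so.
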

\begin{proof}
The diagram
\[\comsquare{\gromonad^2 P}{\gromonad P}{\gromonad P}{P}{\gromonad \varepsilon_P}{\mu_P}{\varepsilon_P}{\varepsilon_P}\]
commutes because $\mu_P=\varepsilon_{\funC P}$ and $\freccia{\gromonad}{\varepsilon}{\id_{\CompAdj}}$ is a 2-natural transformation. Similarly we have that the unit axiom for strict algebras is satisfied.
\end{proof}

\begin{proposition}\label{proposition P algebras imples comprehensios}
Let $(P,(F,b))$ be a $\alg{\gromonad}$. Then the doctrine $P$ has full comprehensions. Moreover $(F,b)=\varepsilon_P$.
\end{proposition}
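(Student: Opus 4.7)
The plan is to extract comprehension data from the algebra structure and show it matches $\varepsilon_P$ component by component. I proceed in four steps.

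First, I set up the candidate comprehensions. For each $\alpha\in P(A)$ define $A_\alpha:=F(A,\alpha)$ and $c_\alpha:=F(\comp{\alpha})$, where $\comp{\alpha}=\id_A$ is the canonical $P_c$-comprehension $(A,\alpha)\to(A,\top_A)$ in $\mG_P$. The unit law $(F,b)\circ\eta_P=\id_P$ forces $F(A,\top_A)=A$ and $b_{(A,\top_A)}=\id_{P(A)}$. Naturality of $b$ at $\comp{\alpha}$, combined with the fact that $\alpha$ is the top element of $P_c(A,\alpha)$ and $b$ preserves finite meets, gives $P_{c_\alpha}(\alpha)=b_{(A,\alpha)}(\alpha)=\top_{A_\alpha}$.

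Second, I verify the universal property of $c_\alpha$. Existence is immediate: given $f\colon Z\to A$ with $P_f(\alpha)=\top_Z$, view $f$ as a morphism $(Z,\top_Z)\to(A,\alpha)$ of $\mG_P$ and set $g:=F(f)$; functoriality of $F$ and the unit law on $\comp{\alpha}\circ f$ (which lies over $(Z,\top_Z)\to(A,\top_A)$) yield $c_\alpha\circ g=f$. For uniqueness, which is the technical heart of the proof, I invoke the multiplication axiom at the morphism $\id_A\colon((A,\alpha),\alpha)\to((A,\top_A),\alpha)$ of $\mG_{P_c}$ (valid since $\alpha\le\alpha\wedge\alpha$). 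One side evaluates to $F(\id_{(A,\alpha)})=\id_{A_\alpha}$, the other to $F(c_\alpha\colon(A_\alpha,\top)\to(A,\alpha))$; the strict equation $F\mu_P=FT_c a$ yields the crucial triangle identity $F(c_\alpha\colon(A_\alpha,\top)\to(A,\alpha))=\id_{A_\alpha}$. Then for any $g'$ with $c_\alpha g'=f$, factor $f$ in $\mG_P$ as $(Z,\top_Z)\xrightarrow{g'}(A_\alpha,\top)\xrightarrow{c_\alpha}(A,\alpha)$ and apply $F$: the unit law forces $F(g')=g'$, so $F(f)=\id_{A_\alpha}\circ g'=g'$, and $g'=g$.

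Third, I derive fullness. From the triangle identity and naturality of $b$ at $c_\alpha\colon(A_\alpha,\top)\to(A,\alpha)$ one obtains the formula $b_{(A,\alpha)}(\gamma)=P_{c_\alpha}(\gamma)$ for all $\gamma\leq\alpha$. Assume $c_\alpha=c_\beta\circ k$; then $P_{c_\alpha}(\beta)=\top$, hence $P_{c_\alpha}(\alpha\wedge\beta)=\top$. Applying the multiplication axiom to $\id_A\colon((A,\alpha),\alpha\wedge\beta)\to((A,\top_A),\beta)$ of $\mG_{P_c}$ yields the strict identities $A_{\alpha\wedge\beta}=A_\alpha$ and $c_{\alpha\wedge\beta}=c_\alpha$. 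Symmetrising the argument of the uniqueness step applied to the two elements $\alpha\wedge\beta\leq\alpha$ of $P_c(A,\alpha)$, whose images under $b_{(A,\alpha)}$ both equal $\top_{A_\alpha}$, forces $\alpha\wedge\beta=\alpha$, i.e.\ $\alpha\leq\beta$.

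Finally, identifying $(F,b)$ with $\varepsilon_P=(J_P,j_P)$ is routine with the previous work in hand. By construction $F(A,\alpha)=A_\alpha=J_P(A,\alpha)$. For a morphism $f\colon(A,\alpha)\to(B,\beta)$ of $\mG_P$, naturality of the counit-like transformation $c\colon F\Rightarrow p_P$ (whose components are the $c_\alpha$, established by computing $F(\comp{\beta}\circ f)=F(f\circ\comp{\alpha})$) gives $c_\beta\circ F(f)=f\circ c_\alpha$, and the universal property of $c_\beta$ proved above uniquely identifies $F(f)$ with $J_P(f)$. The formula $b_{(A,\alpha)}(\gamma)=P_{c_\alpha}(\gamma)$ is exactly the definition of $j_{P,(A,\alpha)}(\gamma)$. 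Hence $(F,b)=\varepsilon_P$ on the nose. The main obstacle is choosing the right morphisms in $\mG_{P_c}$ on which the strict multiplication axiom unlocks both the uniqueness of factorisation and the fullness identifications.
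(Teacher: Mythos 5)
Most of your argument is a careful and essentially correct elaboration of the paper's very terse proof, and in places it goes beyond it in a genuinely useful way: the derivation of the ``triangle identity'' $F\bigl(\comp{\alpha}\colon (A_\alpha,\top)\to(A,\alpha)\bigr)=\id_{A_\alpha}$ from the strict multiplication axiom, and the resulting uniqueness of the mediating arrow, is exactly the mechanism the paper hides behind ``it is direct to check''. Steps 1, 2 and 4 (existence of the comprehension, its universal property, and the identification $(F,b)=\varepsilon_P$ via monicity of $\comp{\beta}$ and the formula $b_{(A,\alpha)}=P_{\comp{\alpha}}$) all check out.

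The gap is in Step 3, fullness. Everything up to and including $A_{\alpha\wedge\beta}=A_\alpha$, $c_{\alpha\wedge\beta}=c_\alpha$ and $b_{(A,\alpha)}(\alpha\wedge\beta)=\top_{A_\alpha}=b_{(A,\alpha)}(\alpha)$ is fine, but the final inference --- that this ``forces $\alpha\wedge\beta=\alpha$'' by ``symmetrising the uniqueness step'' --- is not an argument. The uniqueness you proved concerns mediating \emph{arrows} $Z\to A_\alpha$ over a fixed predicate; it says nothing about $b_{(A,\alpha)}$ being injective, i.e.\ about $P_{c_\alpha}$ reflecting $\top$ on the downset of $\alpha$. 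Nothing you have established rules out two distinct predicates $\gamma<\alpha$ with literally the same comprehension arrow $c_\gamma=c_\alpha$ and the same image $\top_{A_\alpha}$ under $b_{(A,\alpha)}$; indeed all the algebra axioms are compatible with that situation (consider, on $\Set$, the elementary doctrine $P(A)=\{(S,T): S\subseteq T\subseteq A\}$ ordered componentwise, whose comprehension of $(S,T)$ is the inclusion of $S$ and hence ignores $T$: it carries a strict $\gromonad$-action of the form $(J_P,j_P)$ but its comprehensions are not full). So the implication ``strict algebra $\Rightarrow$ \emph{full} comprehensions'' is precisely the non-routine part of the statement, it is also the part the paper's own proof passes over in silence, and your proposal does not close it. You need either to locate an additional consequence of the algebra axioms that makes $b_{(A,\alpha)}$ reflect the top element, or to flag that the fullness claim requires a hypothesis beyond the bare strict-algebra structure.
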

\begin{proof}
It is direct to check that given $\alpha\in P(A)$, then $\freccia{F(A,\alpha)}{F(\id_A)}{F(A,\top_A)}$ is a full comprehension of $\alpha$, where $\freccia{(A,\alpha)}{\id_A}{(A,\top_A)}$ is the comprehension of $\alpha\in P_c(A,\top_A)$. Then the doctrine $P$ has full comprehensions and the action $(F,b)$ preserves them. Now we show that $(F,b)=\varepsilon_P$.
By the unit axiom of algebras, we have that $(F,b)\eta_P=id_P$, but since $P$ has comprehensions, we also have $\varepsilon_P \eta_P=\id_P$. Therefore we have that $(F,b)\eta_P=\varepsilon_P\eta_P$ implies that $(F,b)=\varepsilon_P$, because $F(A,\alpha)=F((A,\top_A)_{\alpha})=(F(A,\top_A))_{b_{(A,\top)}(\alpha)}=(\varepsilon_P\eta_P(A))_{\alpha}=\varepsilon_P(A,\alpha)$. Similarly one can prove that $F(f)=\varepsilon_P(f)$, because every arrow $\freccia{(A,\alpha)}{f}{(B,\beta)}$ is the unique arrow such that the following diagram commutes 
\[\xymatrix@+2PC{
(A,\alpha)\ar[r]^{\comp{\alpha}}\ar[d]_f & (A,\top_A)\ar[d]^{f}\\
(B,\beta) \ar[r]_{\comp{\beta}} & (B,\top)
}\]
since $(P_c)_{f\comp{\alpha} }(\beta)=\top_{(A,\alpha)}$, since $\alpha\leq P_f(\beta)$. Since both $F$ and $\varepsilon_P$ preserve comprehensions, and since $F(\comp{\alpha})=\varepsilon_P(\comp{\alpha})$ and $F(\frecciasopra{(A,\top_A)}{f}{(B,\top_B)})=\varepsilon_P(\frecciasopra{(A,\top_A)}{f}{(B,\top_B)})$, then $F(\frecciasopra{(A,\alpha)}{f}{(B,\beta)})$ must be equal to the arrow $\varepsilon_P(\frecciasopra{(A,\alpha)}{f}{(B,\beta)})$ (by the unicity of the mediating arrow in the universal property of comprehensions). Hence $F=\varepsilon_P$. Finally it is direct to check that $b=j_P$.
\end{proof}

\begin{proposition}\label{proposition every morf extend uniquile to T-morf}
Let $P$ and $R$ be two doctrines of $\CompAdj$, and let $\freccia{P}{(F,b)}{R}$ be a 1-cell of $\PD$, then there exists a unique 2-cell $\tau$ such that $\freccia{(P,\varepsilon_P)}{((F,b),\tau)}{(R,\varepsilon_R)}$ is a colax morphism of $\alg{\gromonad}$.
\end{proposition}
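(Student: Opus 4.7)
The plan is to construct $\tau$ pointwise using the universal property of full comprehensions in $R$, and then derive uniqueness from the unit coherence combined with the fact that comprehensions are monic.

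First, for each object $(A,\alpha) \in \mG_P$, I would define $\tau_{(A,\alpha)} \colon F(A_\alpha) \to (FA)_{b_A(\alpha)}$ by observing that the arrow $F(\comp{\alpha}) \colon F(A_\alpha) \to FA$ satisfies
\[R_{F(\comp{\alpha})}(b_A(\alpha)) \,=\, b_{A_\alpha}(P_{\comp{\alpha}}(\alpha)) \,=\, b_{A_\alpha}(\top_{A_\alpha}) \,=\, \top_{F(A_\alpha)}\]
by naturality of $b$ and the defining property of $\comp{\alpha}$. Since $R$ has full comprehensions, the universal property of $\comp{b_A(\alpha)}$ yields a unique $\tau_{(A,\alpha)}$ with $\comp{b_A(\alpha)} \circ \tau_{(A,\alpha)} = F(\comp{\alpha})$. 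Naturality of $\tau$ in $(A,\alpha)$ follows from the uniqueness clause in the same universal property together with the explicit description of $J_P(f)$ and $J_R(\wht{F}(f))$ as mediating pullback arrows (Remark \ref{remark compr has pullbacks}). The 2-cell condition in $\ED$ amounts to the inequality $b_{A_\alpha}(P_{\comp{\alpha}}(\gamma)) \leq R_{\tau_{(A,\alpha)}}(R_{\comp{b_A(\alpha)}}(b_A(\gamma)))$ for $\gamma \in P_c(A,\alpha)$; its right-hand side rewrites as $R_{F(\comp{\alpha})}(b_A(\gamma)) = b_{A_\alpha}(P_{\comp{\alpha}}(\gamma))$ by the defining equation of $\tau$ and naturality of $b$, so the inequality holds with equality.

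Next, to check that $((F,b),\tau)$ is a colax $\gromonad$-morphism, I would verify the two coherences. The unit coherence collapses to $\tau_{(A,\top_A)} = \id_{FA}$, which is immediate since $A_{\top_A} = A$, $\comp{\top_A} = \id_A$, and $(FA)_{\top_{FA}} = FA$. The multiplication coherence is the more delicate step: it requires comparing two pastings involving $\tau$, $\gromonad\tau$, and $\mu = \varepsilon\funC$, and I expect it to be handled by showing that both pastings, post-composed with the comprehension $\comp{b_A(\alpha)}$, yield the same underlying arrow in $\mD$ (namely $F(\comp{\alpha})$ applied to the appropriate iterated Grothendieck datum), so the uniqueness of the mediating arrow defining $\tau$ forces the two to coincide.

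Finally, for uniqueness, suppose $\sigma$ is any other 2-cell equipping $(F,b)$ with a colax $\gromonad$-morphism structure. The unit coherence forces $\sigma_{(A,\top_A)} = \id_{FA}$. Applying naturality of $\sigma$ to the canonical arrow $\comp{\alpha} \colon (A,\alpha) \to (A,\top_A)$ in $\mG_P$, whose image under $F \circ J_P$ is $F(\comp{\alpha})$ and whose image under $J_R \circ \wht{F}$ is $\comp{b_A(\alpha)}$, yields $\comp{b_A(\alpha)} \circ \sigma_{(A,\alpha)} = F(\comp{\alpha}) = \comp{b_A(\alpha)} \circ \tau_{(A,\alpha)}$. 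Since every comprehension is monic (Remark \ref{remark compr has pullbacks}), we conclude $\sigma_{(A,\alpha)} = \tau_{(A,\alpha)}$, and so $\tau$ is the unique such 2-cell.
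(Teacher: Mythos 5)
Your proposal is correct and follows essentially the same route as the paper: $\tau_{(A,\alpha)}$ is defined as the unique factorisation of $F(\comp{\alpha})$ through $\comp{b_A(\alpha)}$, the unit coherence forces $\tau_{(A,\top_A)}=\id_{FA}$, and uniqueness is obtained by applying naturality to $\comp{\alpha}\colon(A,\alpha)\to(A,\top_A)$ and using that comprehensions are monic. You are in fact slightly more explicit than the paper on the 2-cell inequality and the multiplication coherence, which the paper leaves as ``direct to show''.
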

\begin{proof}
Consider the square
\[ \comsquare{P_c}{R_c}{P}{R.}{\gromonad (F,b)}{\varepsilon_P}{\varepsilon_R}{(F,b)}\]
Let $(A,\alpha)$ be an object of $\mG_P$. Then we have that \[\varepsilon_R \gromonad(F,b)(A,\alpha)=(FA)_{b_A(\alpha)}\]
and
\[(F,b)\varepsilon_P (A,\alpha)=F(A_{\alpha}).\]
We define $\tau_{(A,\alpha)}$ as the morphism
\[ \xymatrix@+2pc{
(FA)_{b_A(\alpha)} \ar[r]^{\comp{b(\alpha)}} & FA \\
& F(A_{\alpha}) \ar[u]_{F(\comp{\alpha})} \ar[lu]^{\tau_{(A,\alpha)}}
}\]
which exists by the universal property of comprehensions, because
\[ R_{F(\comp{\alpha})}(b_A(\alpha))=b_{A_{\comp{\alpha}}}P_{\comp{\alpha}}(\alpha)=\top.\]
Now we show that the $\tau$  is a natural transformation $\duefreccia{FJ_P}{\tau}{J_R\wht{F}}$. Let us consider an arrow $\freccia{(A,\alpha)}{f}{(B,\beta)}$ of the category $\mG_P$. Then the diagram 
\[\xymatrix@+2pc{
F(A_{\alpha})\ar[dd]_{FJ_P(f)}\ar[rr]^{F(\comp{\alpha})} \ar[rd]_{\tau_{(A,\alpha)}}& &FA\ar[dd]^{F(f)}\\
& (FA)_{b_A(\alpha)}\ar[dd]_(0.3){J_R\wht{F}(f)}\ar[ru]_{\comp{b_A(\alpha)}}\\
F(B_{\beta})\ar[dr]_{\tau_{(B,\beta)}}\ar[rr]|!{[d];[r]}\hole_(0.55){\;\;\;\;\;\;\;F(\comp{\beta})} & &FB\\
& (FB)_{b_B(\beta)}\ar[ru]_{\comp{b_B{\beta}}}.
}\]
commutes, because every triangle commutes and the right and back squares commute, hence, using the fact that comprehensions are mono, we can show that the left square commutes. Therefore, $\tau$ is a natural transformation. Moreover we have that 
\[
\xymatrix@+1pc{
&P(A_{\alpha})\ar[r]^{b_{A_{\alpha}}}& R(FA_{\alpha})\ar[dd]^{R(\tau_{(A,\alpha)})}\\
P_c(A,\alpha) \ar[ur]^{j_P} \ar[dr]_{\wht{b}_{(A,\alpha)}}\\ &R_c(FA,b_A(\alpha))\ar[r]_{j_R}& R((FA)_{b_A(\alpha)})
}\]
commutes, and hence we can conclude that $\tau$ is a 2-cell of $\CE$.

Finally, it is direct to show that $((F,b), \tau)$ satisfies the coherence axioms of colax morphisms of algebras. For example, we have that the following axiom is satisfied 
\[\xymatrix@+1pc{
P \ar[d]_{\eta_P}\ar[r]^{(F,b)} &R\ar[d]^{\eta_R}&&P\ar[dd]_{1_P}\ar[r]^{(F,b)}& R\ar[dd]^{1_B}\\
P_c \ar[d]_{\varepsilon_P} \ar[r]^{(\wht{F},\wht{b})} \xtwocell[r]{}<>{^<5>\tau}&R_c \ar[d]^{\varepsilon_R}\ar@{}[rr]|{=}&& \\
P \ar[r]_{(F,b)} & R&& P\ar[r]_{(F,b)} &R
}\]
because, when $\alpha=\top_A$, then we have that $\tau_{(A,\top_A)}=\id_{FA}$.
Now we show that this $\tau$ is unique. Let us consider another 2-cell $\duefreccia{(F,b)\varepsilon_P}{\theta}{\varepsilon_R\mT_c(F,b)}$ such that $((F,b),\theta)$ is a colax-morphism 
\[\xymatrix@+1pc{
P_c \ar[r]^{(\wht{F},\wht{b})} \ar[d]_{\varepsilon_P} \xtwocell[r]{}<>{^<5>\;\theta
}& R_c \ar[d]^{\varepsilon_R}\\
P \ar[r]_{(F,b)} &R
}\]
of $\mT_c$ algebras. Then it must satisfy the following condition
\[\xymatrix@+1pc{
P \ar[d]_{\eta_A}\ar[r]^{(F,b)} &R\ar[d]^{\eta_B}&&P\ar[dd]_{1_P}\ar[r]^{(F,b)}& R\ar[dd]^{1_B}\\
P_c \ar[d]_{\varepsilon_P} \ar[r]^{(\ltil{F},b)} \xtwocell[r]{}<>{^<5>\theta}&R_c \ar[d]^{\varepsilon_R}\ar@{}[rr]|{=}&& \\
P \ar[r]_{(F,b)} & R&& P\ar[r]_{(F,b)} &R
}\]
and this means that $\theta_{(A,\top_A)}=\id_{FA}$. Therefore, since $\theta$ is a natural transformation from $FJ_P$ to $J_R\wht{F}$, then the following diagram
\[\xymatrix@+2pc{
F(A_{\alpha})\ar[d]_{\theta_{(A,\alpha)}} \ar[r]^{F(\comp{\alpha})}&FA\ar[d]^{\theta_{(A,\top)}}\\
(FA)_{b_A(\alpha)} \ar[r]_{\comp{b_A(\alpha)}}& FA
}\]
commutes and since $\theta_{(A,\top_A)}=\id_{FA}$, then we have that $\theta_{(A,\alpha)}$ must be $\tau_{(A,\alpha)}$ because, by definition, $\tau_{(A,\alpha)}$ is the unique arrow such that the diagram
\[ \xymatrix@+2pc{
(FA)_{b_A(\alpha)} \ar[r]^{\comp{b(\alpha)}} & FA \\
& F(A_{\alpha}) \ar[u]_{F(\comp{\alpha})} \ar[lu]^{\tau_{(A,\alpha)}}
}\]
commutes. Hence $\theta=\tau$.
\end{proof}
\begin{corollary}
Let $P$ and $R$ be two doctrines of $\CompAdj$, and let $\freccia{P}{(F,f)}{R}$ be an invertible 1-cell of $\CompAdj$, then $\tau$ is the identity.
\end{corollary}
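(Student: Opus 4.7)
The plan is to unfold the explicit construction of $\tau$ from Proposition \ref{proposition every morf extend uniquile to T-morf} and then exploit the strict preservation of comprehensions that defines 1-cells of $\CompAdj$.

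Recall from the proof of Proposition \ref{proposition every morf extend uniquile to T-morf} that for each $(A,\alpha)$ in $\mG_P$ the component
\[ \freccia{F(A_\alpha)}{\tau_{(A,\alpha)}}{(FA)_{f_A(\alpha)}} \]
is defined via the universal property of the comprehension $\comp{f_A(\alpha)}$ in $R$ as the unique arrow satisfying
\[ \comp{f_A(\alpha)} \circ \tau_{(A,\alpha)} = F(\comp{\alpha}), \]
where the existence of such a factorization rests on $R_{F(\comp{\alpha})}(f_A(\alpha)) = f_{A_\alpha}(P_{\comp{\alpha}}(\alpha)) = \top$.

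The key step is then to use the assumption that $(F,f)$ lies in $\CompAdj$: by definition of the 1-cells of $\CompAdj$ recalled above, the functor $F$ preserves the chosen comprehensions strictly, meaning $F(\comp{\alpha}) = \comp{f_A(\alpha)}$ in $\mD$. Equating domains of these two arrows gives $F(A_\alpha) = (FA)_{f_A(\alpha)}$ as objects, so $\id_{F(A_\alpha)}$ itself makes the defining diagram of $\tau_{(A,\alpha)}$ commute. By uniqueness of the mediating arrow for the universal property of $\comp{f_A(\alpha)}$, or equivalently by the monicity of comprehensions recorded in Remark \ref{remark compr has pullbacks}, we conclude $\tau_{(A,\alpha)} = \id$ for every $(A,\alpha)$, and hence $\tau$ is the identity 2-cell.

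No substantive obstacle arises: the content of the corollary is essentially the observation that strict preservation of chosen comprehensions, which is built into the very definition of $\CompAdj$, collapses the mediating arrow to the identity. The role of the invertibility hypothesis is conceptual rather than computational: it ensures that $(F,f)$ is promoted from a merely colax $\gromonad$-morphism to a strict isomorphism of $\gromonad$-algebras, in accordance with the property-like (indeed colax-idempotent) nature of $\gromonad$, and witnesses the strictness side of the equivalence $\CE \equiv \alg{\gromonad}$ for invertible data.
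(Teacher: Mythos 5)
Your proof is correct and follows essentially the same route as the paper: the paper simply observes that $\tau$ is unique by Proposition \ref{proposition every morf extend uniquile to T-morf} and that the 2-naturality of $\varepsilon$ on $\CompAdj$ makes the relevant square commute strictly, so the identity must be that unique filler. Your component-wise unfolding (strict preservation of chosen comprehensions forces $F(\comp{\alpha})=\comp{f_A(\alpha)}$, hence the identity satisfies the defining equation of $\tau_{(A,\alpha)}$) is just the explicit verification of that naturality, and your observation that invertibility of $(F,f)$ plays no computational role is consistent with how the paper uses the corollary in the remark that follows it.
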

\begin{proof}
By Proposition \ref{proposition every morf extend uniquile to T-morf} $\tau$ exists and it is unique, and since $\varepsilon$ is 2-natural, it must be the identity.
\end{proof}
\begin{remark}
Observe that if we have a $\gromonad$-morphism $((F,b),\gamma)$
\[\xymatrix@+2pc{
\gromonad P \ar[d]_{\varepsilon_P} \ar[r]^{\gromonad (F,f)} \xtwocell[r]{}<>{^<6>\gamma}& \gromonad R \ar[d]^{\varepsilon_R}\\
P \ar[r]_{(F,f)} &R
}\] 
then, since $\gamma$ is invertible, we have that $(F,b)$ is a 1-cell of $\CompAdj$, so by the previous corollary, $\gamma$ must be the identity.
\end{remark}
Combining previous results, we directly show that the comprehensions completions is 2-monadic.
\begin{theorem}\label{theorem comp. comp. is monadic}
The 2-category $\alg{\gromonad}$ of strict algebras, algebras morphisms and $\gromonad$-transformation is 2-equivalent to the 2-category $\CompAdj$.
\end{theorem}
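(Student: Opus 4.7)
The plan is to exhibit explicit 2-functors $\freccia{\CompAdj}{\Phi}{\alg{\gromonad}}$ and $\freccia{\alg{\gromonad}}{\Psi}{\CompAdj}$ and verify that they are mutually inverse 2-equivalences. All the non-trivial content has already been established in Propositions \ref{proposition P has comp imples T-alg}, \ref{proposition P algebras imples comprehensios} and \ref{proposition every morf extend uniquile to T-morf} together with the Corollary and the Remark that follow; the theorem is really an assembly statement.

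The 2-functor $\Phi$ sends an elementary doctrine $P$ with full comprehensions to the strict $\gromonad$-algebra $(P,\varepsilon_P)$ provided by Proposition \ref{proposition P has comp imples T-alg}. On a 1-cell $\freccia{P}{(F,b)}{R}$ of $\CompAdj$, Proposition \ref{proposition every morf extend uniquile to T-morf} supplies the canonical colax comparison 2-cell $\tau$; however, under the hypothesis that $F$ preserves comprehensions the components $\tau_{(A,\alpha)}$ are forced by the universal property of the comprehension $\comp{b_A(\alpha)}$ to be identities, so $(F,b)$ lifts to a (strict, in particular pseudo) $\gromonad$-morphism. Two-cells of $\CompAdj$ are sent to themselves, the required compatibility with the algebra actions being vacuous once the comparison 2-cells are identities.

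The 2-functor $\Psi$ in the opposite direction is essentially forgetful: a strict algebra $(P,(F,b))$ is sent to $P$, which by Proposition \ref{proposition P algebras imples comprehensios} has full comprehensions and moreover satisfies $(F,b)=\varepsilon_P$. A $\gromonad$-morphism $((F,b),\gamma)$ is sent to the underlying 1-cell $(F,b)$; by the Remark following the Corollary, invertibility of $\gamma$ forces $(F,b)$ to belong to $\CompAdj$ and $\gamma$ itself to be the identity, so the image lies in $\CompAdj$. On 2-cells $\Psi$ is again the identity on underlying data.

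Finally, one checks $\Psi\circ\Phi=\mathrm{id}_{\CompAdj}$ directly from the constructions, and $\Phi\circ\Psi=\mathrm{id}_{\alg{\gromonad}}$ on objects by Proposition \ref{proposition P algebras imples comprehensios} and on 1-cells and 2-cells by the identity nature of $\tau$ and $\gamma$ just described. The subtlest step is the identification of general $\gromonad$-morphisms with 1-cells of $\CompAdj$ equipped with identity comparisons; this is precisely where the colax-idempotency of $\gromonad$ (hence its property-likeness) is used, as it collapses the choice of structural 2-cell to the unique identity one. Everything else reduces to routine bookkeeping on the naturality of $\varepsilon$.
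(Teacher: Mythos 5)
Your proposal is correct and follows essentially the same route as the paper: the theorem is indeed just an assembly of Propositions \ref{proposition P has comp imples T-alg}, \ref{proposition P algebras imples comprehensios} and \ref{proposition every morf extend uniquile to T-morf} together with the subsequent Corollary and Remark, which is exactly how the paper intends it to be read. Your explicit observation that $\tau$ collapses to the identity when $F$ preserves comprehensions, and that invertibility of $\gamma$ forces a $\gromonad$-morphism to come from a 1-cell of $\CompAdj$, is precisely the content the paper leaves implicit.
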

Finally, again by directly applying the previous results, we can prove the following theorem.
\begin{theorem}
The 2-monad $\freccia{\PD}{\gromonad}{\PD}$ is colax-idempotent.
\end{theorem}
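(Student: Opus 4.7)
The plan is to unwind the definition of colax-idempotence and reduce it to results already established in the section. By duality with the lax-idempotent case, a 2-monad $\mT$ is \emph{colax-idempotent} precisely when, for every pair of $\mT$-algebras $(A,a)$ and $(B,b)$ and every 1-cell $\freccia{A}{f}{B}$, there is a unique 2-cell $\ovln{f}\colon f\circ a \Rightarrow b\circ \mT f$ making $((f,\ovln{f}))$ a colax morphism of $\mT$-algebras (equivalently, the forgetful 2-functor from colax morphisms to 1-cells in $\mA$ is an isomorphism of categories on each hom).

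The first step is to reduce to the canonical form of algebras. By Theorem~\ref{theorem comp. comp. is monadic} the comparison 2-functor $\alg{\gromonad}\equiv \CompAdj$ is a 2-equivalence, and by Proposition~\ref{proposition P algebras imples comprehensios} every strict $\gromonad$-algebra has the canonical form $(P,\varepsilon_P)$ for some $P\in\CompAdj$. Hence it is enough to verify the colax-idempotence condition for pairs of algebras of the form $(P,\varepsilon_P)$ and $(R,\varepsilon_R)$, and for arbitrary 1-cells $\freccia{P}{(F,b)}{R}$ of $\ED$ between them.

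The second step is to apply Proposition~\ref{proposition every morf extend uniquile to T-morf}, which already constructs and characterizes the filling 2-cell: for every such $(F,b)$ there exists a \emph{unique} 2-cell $\tau$ fitting in the square
\[\xymatrix@+1pc{
\gromonad P \ar[d]_{\varepsilon_P} \ar[r]^{\gromonad(F,b)} \xtwocell[r]{}<>{^<5>\tau} & \gromonad R \ar[d]^{\varepsilon_R}\\
P \ar[r]_{(F,b)} &R
}\]
and making $((F,b),\tau)$ a colax morphism of $\gromonad$-algebras; concretely $\tau_{(A,\alpha)}$ is the mediating arrow $\freccia{F(A_\alpha)}{}{(FA)_{b_A(\alpha)}}$ provided by the universal property of the full comprehension $\comp{b_A(\alpha)}$. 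Combined with the reduction above, this is exactly the colax-idempotence property.

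The hard work has already been done in Proposition~\ref{proposition every morf extend uniquile to T-morf} (construction of $\tau$ via comprehensions, naturality, and compatibility with $\wht{b}$), so the remaining argument is essentially a bookkeeping assembly: write out the definition, invoke the 2-monadicity result to pass to canonical algebras, and quote the proposition. No further obstacle is expected.
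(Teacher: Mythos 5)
Your proposal is correct and follows essentially the same route as the paper, whose proof of this theorem consists precisely of invoking Proposition \ref{proposition P algebras imples comprehensios} (every strict $\gromonad$-algebra is of the canonical form $(P,\varepsilon_P)$) together with Proposition \ref{proposition every morf extend uniquile to T-morf} (existence and uniqueness of the colax structure 2-cell $\tau$ on any 1-cell between such algebras). Your write-up just spells out the bookkeeping that the paper leaves implicit.
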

\begin{proof}
It follows by Proposition \ref{proposition every morf extend uniquile to T-morf} and by Proposition \ref{proposition P algebras imples comprehensios}.
\end{proof}
Notice that, in particular, the previous theorem implies that the 2-monad $\mT_c$ is property-like, and so we can conclude that having full comprehensions is not only a structure, but it is a \emph{property} of an elementary doctrine.
\begin{remark}
Observe that having a \emph{choice of comprehensions} in the doctrine plays a fundamental role in the development of the results of this section. For example, in the definition of the counit of the 2-adjunction. However, all the results we presented can be generalized if we do not assume any choice of comprehensions. In this case, the functor $\mC$ remains a 2-functor, the unit is a 2-natural transformation, but the counit becomes a pseudo-natural transformation, and then the monad $\mT_c$ is just a pseudo-monad, and not a 2-monad. Then, all the results can be reformulated in terms of pseudo-monads.

\end{remark}
\section{Elementary doctrines with comprehensive diagonals}\label{section comprensive diag}
The notion of \emph{comprehensive diagonals}, together with the construction called \emph{extensional collapse} of an elementary doctrine, was employed by Maietti and Rosolini in \cite{QCFF} to obtain "extensional” models of constructive theories.

We start this section by recalling this notion, which is a special case of comprehension, see Definition \ref{def comrehension}, and then presenting the free construction that forces the equalities of an elementary doctrine to be "extensional".
\begin{definition}\label{def comprehensive diagonals}
An elementary doctrine $\doctrine{\mC}{P}$ \bemph{has comprehensive diagonals} if every diagonal arrow
$  \freccia{A}{\angbr{\id_A}{\id_A}}{A\times A}$ is the comprehension of $\delta_A$.
\end{definition}
The denote by $\CED$ the 2-category whose objects are elementary doctrines with comprehensive diagonals, and whose 1-cells and 2-cells are the same of $\ED$. 

Now we recall the free construction which freely adds comprehensive diagonals to an elementary doctrine

\noindent
Let $\doctrine{\mC}{P}$ be an elementary doctrine, we define $\mX_P$ the \bemph{extensional collapse} of $P$:
\begin{itemize}
\item \textbf{the objects} of $\mX_P$ are the objects of $\mC$;
\item \textbf{a morphism} $\freccia{A}{[f]}{B}$ is an equivalence class of morphisms $\freccia{A}{f}{B}$ such that $\delta_A \leq_{A\times A} P_{f\times f}(\delta_B)$ with respect to the equivalence $f\sim f'$ when $\delta_A\leq_{A\times A} P_{f\times f'}(\delta_B)$.
\end{itemize}
The indexed inf-semilattice $\doctrine{\mX_P}{P_x}$ will
be given by $P$ itself: indeed for every $A$ in $\mC$, $P_x(A)=P(A)$
and for every $\freccia{A}{[f]}{B}$, $P_x([f])=P(f)$ as one shows that
$P(f)=P(f')$ when $f\sim f'$. See \cite[Lemma 5.5]{EQC}.

With the previous assignments the functor $\doctrine{\mX_P}{P_x}$  is an elementary doctrine with comprehensive diagonals. Now we show that, as for the case of ordinary comprehensions, the assignment $P\mapsto P_x$ can be extended to 2-functor 
\[\freccia{\ED}{\funD}{\CED}\]
and we start defining how it acts on the 1-cells and 2-cells in $\ED$.

Let $\doctrine{\mC}{P}$ and $\doctrine{\mD}{R}$ be elementary doctrines, and consider a 1-cell $(F,b)$:
\[\duemorfismo{\mC}{P}{F}{b}{\mD}{R}\]
Let $(\ltil{F},b)$ be the pair where
\begin{itemize}
\item $\ltil{F}(A)$ is $F(A)$ for every $A\in \mX_P$;
\item $\ltil{F}([f])$ is $[F(f)]$ for every $\freccia{A}{[f]}{B}$.
and $b$ remains the same.
\end{itemize}

\begin{lemma}
$(\ltil{F},b)$ is a 1-morphism in $\CED$.
\end{lemma}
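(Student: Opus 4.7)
The plan is to verify that the pair $(\ltil{F},b)$ satisfies the three conditions defining a 1-cell in $\ED$ between $(\mX_P,P_x)$ and $(\mX_R,R_x)$; this suffices because the 1-cells of $\CED$ are exactly those of $\ED$ between doctrines with comprehensive diagonals. I first check that $\ltil{F}\colon\mX_P\to\mX_R$ is a well-defined, finite-product-preserving functor; then that $b$ remains a natural transformation $P_x\Rightarrow R_x\circ\ltil{F}^{\op}$; and finally that $b$ preserves the elementary structure.

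The main obstacle is the well-definedness of $\ltil{F}$ on morphism classes, since this is the only place where the extensional collapse interacts non-trivially with $(F,b)$. I plan to handle both the ``target is in $\mX_R$'' requirement and the ``respects $\sim$'' requirement by a single computation. Given $f,g\colon A\to B$ in $\mC$ with $\delta_A \leq P_{f\times g}(\delta_B)$, apply the monotone map $b_{A\times A}$, use the naturality square
\[ b_{A\times A}\circ P_{f\times g} = R_{F(f\times g)}\circ b_{B\times B},\]
together with the structure-preservation identity $b_{B\times B}(\delta_B) = R_{\angbr{F(\pr_1)}{F(\pr_2)}}(\delta_{FB}) = \delta_{FB}$ modulo the canonical iso $F(B\times B)\cong FB\times FB$ coming from $F$ preserving products. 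This yields
\[ \delta_{FA} \leq R_{F(f)\times F(g)}(\delta_{FB}).\]
Specialising $g=f$ shows that $F(f)$ is a legitimate representative of a morphism in $\mX_R$; specialising $g=f'$ for a second representative with $f\sim f'$ shows $F(f)\sim F(f')$, so the assignment $[f]\mapsto[F(f)]$ descends to equivalence classes.

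Once well-definedness is in hand, the rest is routine and I would dispatch it quickly. Functoriality of $\ltil{F}$ follows from $[F(g)F(f)]=[F(gf)]$ and $\ltil{F}(\id)=[\id]$. Preservation of finite products is inherited from $F$, since the products in $\mX_P$ and $\mX_R$ have the same underlying objects as in $\mC$ and $\mD$ and the projections are the classes of those of $\mC$ and $\mD$. Naturality of $b$ with respect to $\ltil{F}$ is automatic, because $P_x([f])=P(f)$ and $R_x([F(f)])=R(F(f))$ reduce the required squares to those already valid for $(F,b)$ in $\ED$. Finally, the identity $b_{A\times A}(\delta_A) = R_{\angbr{F(\pr_1)}{F(\pr_2)}}(\delta_{FA})$ persists because neither the fibres, nor the diagonals, nor the components of $b$ have changed in passing from $P,R$ to $P_x,R_x$.
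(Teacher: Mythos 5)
Your proposal is correct and follows essentially the same route as the paper: the single computation applying $b_{A\times A}$ to $\delta_A\leq P_{f\times g}(\delta_B)$, then using naturality of $b$, the identity $b_{B\times B}(\delta_B)=R_{\angbr{F(\pr_1)}{F(\pr_2)}}(\delta_{FB})$, and the product-comparison isomorphism to conclude $\delta_{FA}\leq R_{F(f)\times F(g)}(\delta_{FB})$, is exactly the paper's argument for well-definedness, and the remaining verifications are dispatched in the same routine way (the paper additionally invokes \cite[Lemma 5.4]{EQC} to identify $\angbr{\ltil{F}[\pr_1]}{\ltil{F}[\pr_2]}$ with $[\angbr{F\pr_1}{F\pr_2}]$, which is the precise form of your closing observation).
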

\begin{proof}
First we prove that $\freccia{\mX_P}{\tilde{F}}{\mX_R}$ is a
well-defined functor.
If $\freccia{A}{f}{B}$ and $\freccia{A}{g}{B}$ are a morphism in
$\mC$, such that $ \delta_A \leq P_{g \times f }(\delta_B)$, then we
have 
\[b_{A \times A}(\delta_A)\leq b_{A \times A}(P_{g \times f }(\delta_B))\]
Since $b$ is a natural transformation, the following diagram commutes
\[ \comsquare{P(B\times B)}{P(A\times A)}{RF(B\times B)}{RF(A\times A).}{P_{g\times f}}{b_{B\times B}}{b_{A\times A}}{R_{F(g\times f)}}\]
Hence we have
\[b_{A \times A}(\delta_A)\leq R_{F(g \times f )}(b_{B \times B}(\delta_B)).\]
By definition,
$b_{A \times
  A}(\delta_A)=R_{\angbr{F(\pr_1)}{F(\pr_2)}}(\delta_{F(B)})$, thus
\[R_{\angbr{F(\pr_1)}{F(\pr_2)}}(\delta_{F(A)})\leq R_{\angbr{F(\pr'_1)}{F(\pr'_2)}\circ F(g\times f)}(\delta_{F(B)})\]
where $\freccia{A\times A}{\pr_i}{A}$ and $\freccia{B\times B}{\pr'_i}{B}$ are the projections.
Finally 
\[F(g \times f) \circ \angbr{F(\pr_1)}{F(\pr_2)}^{-1}=\angbr{F(\pr'_1)}{F(\pr'_2)}\circ F(g)\times F(f),\]
so
\[ \delta_A\leq R_{F(g)\times F(f)}(\delta_B).\]
It is now easy to check that $\tilde{F}$ is a functor from
$\mX_P$ to $\mX_R$.
Then, we have that $(\tilde{F}, b)$ is a 1-cell observing that
\[ b_{A\times A}(\delta_A)=(R_x)_{\angbr{\ltil{F}([\pr_1])}{\ltil{F}([\pr_2])}}(\delta_{\ltil{F}(B)})\]
because $\ltil{F}([\pr_i])=[F(\pr_i)]$, $\ltil{F}(B)=F(B)$ by definition of $\ltil{F}$, and 
\[ \angbr{\ltil{F}([\pr_1])}{\ltil{F}([\pr_2])}=[\angbr{F(\pr_1)}{F(\pr_2)}]\]
by \cite[Lemma 5.4]{EQC}, and 
\[ (R_x)_{\angbr{\ltil{F}([\pr_1])}{\ltil{F}([pr2])}}=(R_x)_{[\angbr{F(\pr_1)}{F(\pr_2)}]}=R_{\angbr{F(\pr_1)}{F(\pr_2)}}. \]
\end{proof}
\begin{lemma}
Given a 2-cell $\duefreccia{(F,b)}{\theta}{(G,c)}$, where $(F,b)$ and
$(G,c)$ are 1-cells in $\ED(P,R)$, we define
$\freccia{\ltil{F}}{\ltil{\theta}}{\ltil{G}}$ as the natural
transformation with $\ltil{\theta}_A=[\theta_A]$. Then $\ltil{\theta}$ is a 2-cell of $\CED$.
\end{lemma}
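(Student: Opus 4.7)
The plan is to verify the three conditions defining a 2-cell of $\CED$: that each component $\tilde{\theta}_A = [\theta_A]$ is a legitimate morphism of $\mX_R$, that the resulting family is natural, and that the 2-cell inequality $b_A(\alpha) \leq (R_x)_{\tilde{\theta}_A}(c_A(\alpha))$ holds for every $\alpha \in P(A)$.

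The main technical step, and the only non-trivial one, is showing that $\delta_{FA} \leq R_{\theta_A \times \theta_A}(\delta_{GA})$, which is precisely the condition for $[\theta_A]$ to be a morphism in $\mX_R$. Applying the 2-cell condition on $\theta$ at the element $\delta_A \in P(A \times A)$ gives
\[ b_{A \times A}(\delta_A) \leq R_{\theta_{A \times A}}(c_{A \times A}(\delta_A)).\]
Using that $(F,b)$ and $(G,c)$ are 1-cells of $\ED$, this rewrites as
\[ R_{\angbr{F(\pr_1)}{F(\pr_2)}}(\delta_{FA}) \leq R_{\theta_{A \times A}} R_{\angbr{G(\pr_1)}{G(\pr_2)}}(\delta_{GA}).\]
Naturality of $\theta$ at the projections yields $\angbr{G(\pr_1)}{G(\pr_2)} \circ \theta_{A \times A} = (\theta_A \times \theta_A) \circ \angbr{F(\pr_1)}{F(\pr_2)}$, so the right-hand side equals $R_{\angbr{F(\pr_1)}{F(\pr_2)}} R_{\theta_A \times \theta_A}(\delta_{GA})$. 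Since $F$ preserves finite products, $\angbr{F(\pr_1)}{F(\pr_2)}$ is an isomorphism in $\mD$, hence $R_{\angbr{F(\pr_1)}{F(\pr_2)}}$ is an order-isomorphism between fibres, and the required inequality $\delta_{FA} \leq R_{\theta_A \times \theta_A}(\delta_{GA})$ follows by cancellation.

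Naturality of $\tilde{\theta}$ is then immediate: given $[f] \colon A \to B$ in $\mX_P$, naturality of $\theta$ in $\mC$ gives $G(f) \circ \theta_A = \theta_B \circ F(f)$, and passing to equivalence classes one obtains $\tilde{G}([f]) \circ \tilde{\theta}_A = \tilde{\theta}_B \circ \tilde{F}([f])$ in $\mX_R$.

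Finally, since $P_x$ coincides with $P$ on fibres and $(R_x)_{[\theta_A]} = R_{\theta_A}$ by definition of $R_x$, the 2-cell inequality $b_A(\alpha) \leq (R_x)_{\tilde{\theta}_A}(c_A(\alpha))$ in $\CED$ is literally the 2-cell condition $b_A(\alpha) \leq R_{\theta_A}(c_A(\alpha))$ for $\theta$ in $\ED$, and so holds by hypothesis. The main obstacle is thus confined to the first step, which is a controlled diagram chase exploiting product-preservation of $F$ together with the compatibility of $b$ and $c$ with the fibred equalities.
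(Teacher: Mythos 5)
Your proof is correct and its final step coincides with the paper's entire argument: since $P_x$ agrees with $P$ on fibres and $(R_x)_{[\theta_A]}=R_{\theta_A}$, the 2-cell inequality for $\ltil{\theta}$ is literally the one already assumed for $\theta$. You additionally verify that each $[\theta_A]$ is a legitimate arrow of $\mX_R$ (i.e.\ $\delta_{FA}\leq R_{\theta_A\times\theta_A}(\delta_{GA})$, obtained by evaluating the 2-cell condition at $\delta_A$ and cancelling the isomorphism $\angbr{F(\pr_1)}{F(\pr_2)}$), a point the paper leaves implicit but which is genuinely needed and is handled by exactly the same diagram chase the paper uses in the preceding lemma for $\ltil{F}$.
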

\begin{proof} Since $\theta$ it is a 2-cell
in $\ED$ we have that
\[ b_A(\alpha)\leq_{F(A)} R_{\theta_A}(c_A(\alpha)).\]
Hence, by definition of $R_x$ and $\ltil{F}$, we have
\[ R_{\theta_A}(c_A(\alpha))=(R_x)_{[\theta_A]}(c_A(\alpha))=(R_x)_{\ltil{\theta}_A}(c_A(\alpha)),\]
so 
\[ b_A(\alpha)\leq_{\ltil{F}(A)}(R_x)_{\ltil{\theta}_A}(c_A(\alpha)).\]
Then $\ltil{\theta}$ is a 2-cell of $\ED$, and hence of $\CED$.  
\end{proof}
\begin{proposition}\label{functor D_P,R}
Let $\doctrine{\mC}{P}$ and $\doctrine{\mD}{R}$ be elementary doctrines. The map
\[\freccia{\ED(P,R)}{\funD_{P,R}}{\CED(P_x,R_x)}\]
such that $\funD_{P,R}(F,b)=(\ltil{F},b)$ and $\funD_{P,R}(\theta)=\ltil{\theta}$ is a functor and
\[\freccia{\ED}{\funD}{\CED} \]
is a 2-functor with the assignment $\funD(P)=P_x$.
\end{proposition}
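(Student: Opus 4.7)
The two preceding lemmas already do the heavy lifting: they establish that the pair $(\ltil{F},b)$ is a well-defined 1-cell in $\CED$ whenever $(F,b)$ is a 1-cell in $\ED$, and that $\ltil{\theta}$ is a well-defined 2-cell in $\CED$ whenever $\theta$ is a 2-cell in $\ED$. Thus to prove the proposition it remains to verify the functoriality conditions at each level (hom-functor, identity 1-cells, composition of 1-cells, and compatibility with horizontal composition), which I plan to dispatch in turn.

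First I would check that $\funD_{P,R}$ is a functor between the hom-categories. Given a 1-cell $(F,b)$, the identity 2-cell is $\theta_A=\id_{FA}$, so $\ltil{\theta}_A=[\id_{FA}]=\id_{FA}$ in $\mX_R$, which is the identity 2-cell on $(\ltil F,b)$. Given two composable 2-cells $\duefreccia{(F,b)}{\theta}{(G,c)}$ and $\duefreccia{(G,c)}{\sigma}{(H,d)}$ in $\ED(P,R)$, I have $\ltil{\sigma\theta}_A=[(\sigma\theta)_A]=[\sigma_A][\theta_A]=\ltil\sigma_A\,\ltil\theta_A$, using that composition in $\mX_R$ is defined on representatives. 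This shows $\funD_{P,R}$ is a functor.

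Next I would verify 2-functoriality of $\funD$. Identity 1-cells are handled by $\ltil{\id_{\mC}}=\id_{\mX_P}$ since $[\id_A]=\id_A$ in $\mX_P$ and $b=\id$ remains the identity; hence $\funD(\id_P)=\id_{P_x}$. For composition of two 1-cells $(F,b)\colon P\to R$ and $(G,c)\colon R\to S$, a direct computation using $\ltil{G}\ltil{F}([f])=\ltil{G}([F(f)])=[GF(f)]=\ltil{GF}([f])$ together with the fact that the natural transformation component of $(G,c)(F,b)$ is $c_{F-}\circ b$ (unchanged by $\funD$) gives $\funD((G,c)(F,b))=\funD(G,c)\funD(F,b)$. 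Horizontal composition of 2-cells is preserved by the same representative-level argument, since $[\sigma_{FA}\circ G(\theta_A)]=[\sigma_{\ltil FA}]\circ \ltil{G}([\theta_A])$.

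The only step that is not entirely mechanical is making sure that whenever the formula involved in a composition or a horizontal composite invokes the quotient structure of $\mX_P$, the computation is actually independent of the chosen representative. This is precisely the content of \cite[Lemmas 5.4--5.5]{EQC} (already used in the previous lemma to identify $\angbr{\ltil F[\pr_1]}{\ltil F[\pr_2]}$ with $[\angbr{F\pr_1}{F\pr_2}]$), so the verifications reduce to the routine equalities above. I therefore expect no genuine obstacle, and the proposition follows by combining the preceding lemmas with these functoriality checks.
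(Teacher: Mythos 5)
Your proposal is correct and follows exactly the route the paper intends: the paper states this proposition without proof, treating it as an immediate consequence of the two preceding lemmas, and your representative-level verifications of identities, vertical/horizontal composition, and composition of 1-cells are precisely the routine checks being left implicit. The one point of care you flag---well-definedness of operations on equivalence classes, covered by the cited lemmas of the quotient construction---is indeed the only non-mechanical ingredient, so nothing is missing.
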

We prove that the 2-functor $\freccia{\ED}{\funD}{\CED}$ is left adjoint to the forgetful 2-functor $\freccia{\CED}{\funU}{\ED}$.

First, observe that for every elementary doctrine $P$ there is a natural embedding  
\[\duemorfismo{\mC}{P}{K_P}{{k_P}}{\mX_P}{{P_x}}\]
of elementary doctrines, where $\freccia{\mC}{K_P}{\mX_P}$ is the quotient functor, and $k$ is the identity.

Similarly, if $P$ is an elementary doctrine with comprehensive diagonals, we can define a 1-cell
\[\duemorfismo{\mX_P}{{P_x}}{T_P}{{t_P}}{\mC}{P}\]
where $\freccia{\mX_P}{T_P}{\mC}$ is the identity on the objects, and it sends $[f]\mapsto f$, and $t_P$ is the identity. Notice that $T_P$ is a well defined functor, because if $\doctrine{\mC}{P}$ has comprehensive diagonals, then $f\sim g$ implies $f=g$. In details, let $\freccia{A}{f}{B}$ and $\freccia{A}{g}{B}$ be morphisms such that $\delta_A\leq P_{f\times g}(\delta_B)$. Then we have that $$\top_A\leq P_{\Delta_A} (P_{f\times g }(\delta_B))=P_{\angbr{f}{g}}(\delta_B).$$
Thus, there exists a unique morphism $\freccia{A}{h}{B}$ such that the following diagram
\[\xymatrix{
B \ar[rr]^{\Delta_B} && B\times B\\
& A \ar@{-->}[ul]^h \ar[ur]_{\langle f,g \rangle}
}\]
commutes. Hence, if $P\in \CED$ then we have that $f\sim g$ if and only if $f=g$.
Thus, we can define the 1-cell $\freccia{P_x}{(T_P,t_P)}{P}$ as before.

\begin{remark}\label{rem ps idem}
Notice that  if $P$ has comprehensive diagonal, then $\varepsilon_P$ and $\eta_P$ are isomorphism.
\end{remark}
\begin{theorem}\label{theorem diag comprehension completion}
The 2-functor $\freccia{\ED}{\funD}{\CED}$ is 2-left adjoint to the forgetful functor $\freccia{\CED}{\funU}{\ED}$. The unit of this 2-adjunction $\freccia{\id_{\PD}}{\eta}{\funU\funD}$ is given by $\eta_P=(K_P,k_P)$ and the counit $\freccia{\funD\funU}{\varepsilon}{\id_{\CED}}$ is given by $\varepsilon_P=(T_P,t_P)$.
\end{theorem}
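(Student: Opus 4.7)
The plan is to establish the adjunction in the standard way: exhibit the unit $\eta$ and counit $\varepsilon$ as 2-natural transformations and verify the two triangle identities. Since both $K_P$ and $T_P$ are identity-on-objects functors, with $K_P$ passing to equivalence classes of morphisms and $T_P$ picking representatives (well-defined because $P\in\CED$ forces $f\sim g$ to imply $f=g$, as observed in the construction), most verifications reduce to unwinding definitions.

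First I would verify 2-naturality of $\eta$. For a 1-cell $(F,b)\colon P\to R$ in $\ED$, one needs $\funU\funD(F,b)\circ\eta_P=\eta_R\circ(F,b)$, i.e.\ $(\ltil{F},b)\circ(K_P,k_P)=(K_R,k_R)\circ(F,b)$. Both composites act as $F$ on objects and send a morphism $f$ of $\mC$ to $[F(f)]$, while the 2-cell parts both equal $b$ since $k_P$ and $k_R$ are identities. For a 2-cell $\theta\colon(F,b)\Rightarrow(G,c)$, 2-naturality reduces to $\ltil{\theta}_A=[\theta_A]$ coinciding with the composite through $K_R$, which is immediate from the definitions in Proposition \ref{functor D_P,R}.

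Next I would verify 2-naturality of $\varepsilon$. For a 1-cell $(F,b)\colon P\to R$ in $\CED$, the naturality square requires $T_R\ltil{F}=F T_P$, which holds since both functors are the identity on objects and $T_R\ltil{F}([f])=T_R([F(f)])=F(f)=F(T_P([f]))$; the 2-cell component is the identity on both sides. Naturality in 2-cells is analogous. The triangle identities are now immediate: at $P\in\CED$, the composite $\funU\varepsilon_P\circ\eta_{\funU P}=(T_P,t_P)\circ(K_P,k_P)$ acts by $f\mapsto T_P([f])=f$; at $P\in\ED$, the composite $\varepsilon_{\funD P}\circ\funD\eta_P=(T_{P_x},t_{P_x})\circ(\ltil{K_P},k_P)$ acts by $[f]\mapsto T_{P_x}([[f]])=[f]$. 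In both cases the 2-cell components are identities because $k_P$ and $t_P$ are identities.

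The only nontrivial point is ensuring $T_P$ is well-defined on morphisms when $P\in\CED$, which is exactly the argument given before the statement: the universal property of $\angbr{\id_A}{\id_A}=\comp{\delta_A}$ forces $f\sim g\Rightarrow f=g$. I do not expect any significant obstacle beyond careful bookkeeping, since both $K_P$ and $T_P$ are defined to be identities on objects and the natural-transformation components $k_P$, $t_P$ are identities, so all structural conditions (preservation of products, of fibred equality, and compatibility with the inf-semilattice structure) transfer trivially between $P$ and $P_x$.
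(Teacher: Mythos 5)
Your proposal is correct and follows exactly the route the paper intends: the paper omits the proof of this theorem (as it does for the analogous Theorem~\ref{theorem comprehension completion}, where it simply states that the 2-naturality of $\eta$ and $\varepsilon$ and the triangle identities are ``direct to verify''), and your verification of 2-naturality and the two triangle identities, reducing everything to the fact that $K_P$ and $T_P$ are identity-on-objects with $k_P$, $t_P$ identities and that comprehensive diagonals force $f\sim g\Rightarrow f=g$, is precisely that routine check carried out in full.
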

The 2-adjunction of Theorem  \ref{theorem diag comprehension completion} induces a 2-monad $\freccia{\PD}{\mT_d}{\PD}$, whose unit is given by the unit of the 2-adjunction, and whose multiplication is defined by $\mu=\varepsilon \funD$, as in 1-dimensional case.

As for the case of the comprehension completion, we show that 2-monad $\mT_d$ is \bemph{pseudo-idempotent}, and that we have the equivalence of 2-categories
$$\alg{\mT_d}\equiv \CED .$$
However, in this case, it is immediate to show that the 2, because by Remark \ref{rem ps idem} we have that the multiplication of the 2-monad $\mT$ is invertible.
\begin{theorem}\label{theorem T is lax-idempotent}
The 2-monad $\freccia{\ED}{\mT_d}{\ED}$ is pseudo-idempotent.
\end{theorem}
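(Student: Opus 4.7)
My plan is to unwind the definition of \emph{pseudo-idempotent} and then invoke Remark \ref{rem ps idem} directly. Recall that a 2-monad is pseudo-idempotent precisely when its multiplication is a (2-natural) isomorphism; equivalently, when $\mu_P$ is invertible for every object $P$. Since the statement is already flagged in the text (``by Remark \ref{rem ps idem} we have that the multiplication of the 2-monad $\mT$ is invertible''), the proof should consist of putting this observation into context.

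The first step is to use the construction of $\mu$ from the 2-adjunction $\funD \dashv \funU$: by the general recipe, $\mu_P = \varepsilon_{\funD P} = \varepsilon_{P_x}$, where $\varepsilon$ is the counit described in Theorem \ref{theorem diag comprehension completion}. Concretely, $\varepsilon_{P_x} = (T_{P_x}, t_{P_x})$ is the 1-cell that is the identity on objects of $\mX_{P_x}$ and sends $[f]\mapsto f$, with $t_{P_x}$ the identity natural transformation on fibres.

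The second step is to observe that $P_x$ is an object of $\CED$: its diagonals are comprehensive by the very construction of the extensional collapse. We may therefore apply Remark \ref{rem ps idem} to $P_x$: whenever a doctrine has comprehensive diagonals, both $\eta$ and $\varepsilon$ at that doctrine are invertible 1-cells. In particular $\varepsilon_{P_x}$ is invertible, and hence $\mu_P$ is an invertible 1-cell in $\ED$. This holds naturally in $P$, so $\mu \colon \mT_d^2 \Rightarrow \mT_d$ is a 2-natural isomorphism.

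The only mild subtlety is to verify that the inverse of $\mu_P$ is compatible with the unit axioms for pseudo-idempotence — but this is automatic, since having a 2-natural iso multiplication is equivalent to pseudo-idempotence (one then recovers the equivalence $\eta_{\mT_d P} \simeq \mT_d \eta_P$ from the triangle identities). No genuine obstacle arises: the hard work was already done in establishing Theorem \ref{theorem diag comprehension completion} and Remark \ref{rem ps idem}, and the present theorem is essentially a reformulation of the fact that $P_x$ always belongs to the subcategory $\CED$ of fixed points of the adjunction.
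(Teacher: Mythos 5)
Your proposal is correct and follows essentially the same route as the paper: the paper's proof is precisely the observation that $\mu_P=\varepsilon_{P_x}$ is invertible because $P_x$ always has comprehensive diagonals, which is the content of Remark \ref{rem ps idem}. You have merely spelled out the details that the paper leaves implicit, so no further comparison is needed.
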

\begin{proof}
It follow from Remark \ref{rem ps idem}.
\end{proof}
The previous result means that also having comprehensive diagonals is a property of an elementary doctrine.
\begin{proposition}\label{proposition P has comp diag imples T-alg
}
Let $\doctrine{\mC}{P}$ be an elementary doctrine of $\CED$, then $(P,\varepsilon_P)$ is a $\mT_d$-algebra.
\end{proposition}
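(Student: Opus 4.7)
The plan is to verify directly the two axioms of a $\mT_d$-algebra for the pair $(P,\varepsilon_P)$, mirroring the argument given in Proposition \ref{proposition P has comp imples T-alg} for the comprehension-completion case. Recall that $\mT_d = \funU\funD$ and that the multiplication of the 2-monad induced by the 2-adjunction of Theorem \ref{theorem diag comprehension completion} is $\mu = \varepsilon\funD$, so in particular $\mu_P = \varepsilon_{\funD P}$.

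For the multiplication axiom, one must check that the diagram
\[\comsquare{\mT_d^2 P}{\mT_d P}{\mT_d P}{P}{\mT_d \varepsilon_P}{\mu_P}{\varepsilon_P}{\varepsilon_P}\]
commutes. This is a direct instance of the 2-naturality of $\varepsilon\colon \funD\funU \to \id_{\CED}$ at the 1-cell $\varepsilon_P\colon \funD\funU P \to P$ of $\CED$: the naturality square of $\varepsilon$ at $\varepsilon_P$ is exactly the diagram above, once one substitutes $\mu_P = \varepsilon_{\funD P}$.

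For the unit axiom, one needs $\varepsilon_P \circ \eta_P = \id_P$, which is precisely the triangle identity $\funU\varepsilon \circ \eta\funU = \id_{\funU}$ of the 2-adjunction $\funD \dashv \funU$ established in Theorem \ref{theorem diag comprehension completion}, evaluated at $P$. There is no genuine obstacle here: the statement is a completely formal consequence of the fact that every 2-adjunction induces a 2-monad for which the counit of the adjunction provides a (strict) algebra structure on each object of the right-hand 2-category, exactly as in the 1-dimensional case.
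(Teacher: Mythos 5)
Your proof is correct and follows essentially the same route as the paper: the multiplication axiom is the 2-naturality square of $\varepsilon$ at the 1-cell $\varepsilon_P$ after identifying $\mu_P$ with $\varepsilon_{\funD P}$, and the unit axiom is the triangle identity $\funU\varepsilon \circ \eta\funU = \id_{\funU}$ evaluated at $P$. The paper's proof is just a terser statement of the same argument.
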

\begin{proof}
The diagram
\[\comsquare{\mT_d^2 P}{\mT_d P}{\mT_d P}{P}{\mT_d \varepsilon_P}{\mu_P}{\varepsilon_P}{\varepsilon_P}\]
commutes because $\mu_P=\varepsilon_{\funD P}$ and $\freccia{\mT_d}{\varepsilon}{\id_{\CED}}$ is a 2-natural transformation. Similarly we have that the unit axiom for strict algebras is satisfied.
\end{proof}

\begin{proposition}\label{remark (P,a) pseudo-T-algebra implies P in CED}
Let $(P,(F,b))$ be a $\alg{\mT_d}$. Then the doctrine $P$ has comprehensive diagonals and $(F,b)=\varepsilon_P$.
\end{proposition}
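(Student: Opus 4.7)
The whole argument is driven by the unit axiom of a strict $\mT_d$-algebra. Writing it out, $(F,b)\circ\eta_P=\id_P$ gives, on the base category, $F\circ K_P=\id_{\mC}$, and on the indexed level $b\circ k_P=\id_{P}$. Since $K_P$ is the identity on objects and $k_P$ is the identity natural transformation, the first identity forces $F$ to be the identity on objects with $F([f])=f$ for every morphism $f$ of $\mC$, while the second forces $b=\id$.

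The crucial consequence of the unit law is well-definedness of $F$ on equivalence classes: if $f\sim g$ in $\mC$ (i.e. $\delta_A\leq P_{f\times g}(\delta_B)$) then $[f]=[g]$, and so $f=F([f])=F([g])=g$. I would then deduce from this that $P$ has comprehensive diagonals. Given any $h\:Z\to A\times A$ with $P_h(\delta_A)=\top_Z$, set $f_1:=\pr_1\circ h$ and $f_2:=\pr_2\circ h$, so that $h=\angbr{f_1}{f_2}=(f_1\times f_2)\circ\Delta_Z$. Then
\[
\top_Z=P_h(\delta_A)=P_{\Delta_Z}P_{f_1\times f_2}(\delta_A),
\]
and by the adjunction $\Einv_{\Delta_Z}\dashv P_{\Delta_Z}$ together with the Frobenius-type computation $\Einv_{\Delta_Z}(\top_Z)=P_{\pr_1}(\top_Z)\wedge\delta_Z=\delta_Z$, this gives $\delta_Z\leq P_{f_1\times f_2}(\delta_A)$, i.e.\ $f_1\sim f_2$. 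The preceding paragraph then yields $f_1=f_2$, so $h$ factors through $\Delta_A$ by the common value $f_1=f_2$; uniqueness of the factorisation is automatic because $\Delta_A$ is monic. Hence $\Delta_A$ is a comprehension of $\delta_A$, and $P\in\CED$.

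Once $P$ is known to have comprehensive diagonals, $\varepsilon_P=(T_P,t_P)$ is defined with $T_P$ the identity on objects, $T_P([f])=f$ and $t_P=\id$. Comparing with the conclusions of the first paragraph, we get $F=T_P$ and $b=t_P$, so $(F,b)=\varepsilon_P$, as required.

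The only non-routine step is the passage from $(f\sim g\Rightarrow f=g)$ to the universal property of the diagonal; the manipulation above, which hinges on the computation $\Einv_{\Delta_Z}(\top_Z)=\delta_Z$ and the fact that $\Delta_A$ is always monic in an elementary doctrine, is the main (and essentially only) technical point. Everything else is a direct reading-off of the unit law together with the explicit description of $\eta_P$ and $\varepsilon_P$ given just before the proposition.
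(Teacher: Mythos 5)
Your proof is correct and follows essentially the same route as the paper's: both extract $F=\id$ on objects and $F([f])=f$ from the unit axiom and then verify the universal property of $\Delta_A$ directly (indeed, your adjunction computation $\Einv_{\Delta_Z}(\top_Z)=\delta_Z$ completes the verification that the paper's own proof leaves unfinished). The only detail you elide is the first half of the comprehension condition, $P_{\Delta_A}(\delta_A)=\top_A$, which is immediate from the unit of the adjunction $\Einv_{\Delta_A}\dashv P_{\Delta_A}$ applied to $\top_A$.
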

\begin{proof}
Since $(P,(F,b))$ is a $\mT_d$-algebra, the following diagram, i.e. the identity axiom holds
\[\xymatrix@+1pc{
P\ar[rd]_{1_P}\ar[r]^{\eta_P} & \mT_dP \ar[d]^{(F,b)}\\
& P
}\]
commutes. Since $P_x$ has comprehensive diagonals and it acts on morphisms as $P$, i.e. $P_x([f])=P_f$ it is direct to show that $P$ has comprehensive diagonals, which are given by morphisms of the form $F([\Delta_A])$. Moreover, by unit axiom, $F$ must be the identity on the objects of $\mX_P$, and $F([f])=F\eta_P (f)=f$, i.e. $F=\varepsilon_P.$ So, let $\freccia{C}{f}{A\times A}$ be a morphism of $\mC$ such that $\top_C\leq P_f(\delta_A)$.
\end{proof}

\begin{theorem}
We have the following equivalence of categories
\[ \alg{\mT_d} \cong \CED\]
\end{theorem}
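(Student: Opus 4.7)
The plan is to construct an equivalence of 2-categories directly, exploiting the two propositions established immediately before the statement together with pseudo-idempotence.

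I would define a 2-functor $\freccia{\CED}{\Phi}{\alg{\mT_d}}$ on objects by $\Phi(P) = (P,\varepsilon_P)$, where Proposition about $P\in\CED$ implying $(P,\varepsilon_P)$ is a $\mT_d$-algebra guarantees that this assignment lands in $\alg{\mT_d}$. On a 1-cell $\freccia{P}{(F,b)}{R}$ of $\CED$, I would use the fact that $\mT_d$ is pseudo-idempotent (established in the theorem just above) to supply a unique invertible 2-cell filling the naturality square of $\varepsilon$ with respect to $(F,b)$, turning $(F,b)$ into a $\mT_d$-morphism of algebras. On 2-cells $\theta$ in $\CED$, the assignment is the identity, as the coherence condition for $\mT_d$-transformations follows from the 2-naturality of $\varepsilon$ restricted to doctrines in $\CED$.

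In the opposite direction I would take the obvious forgetful 2-functor $\freccia{\alg{\mT_d}}{\Psi}{\CED}$ sending $(P,(F,b))$ to $P$, a lax morphism $((F,b),\ovln{f})$ to $(F,b)$, and a $\mT_d$-transformation $\alpha$ to itself; the proposition stating that a $\mT_d$-algebra $(P,(F,b))$ forces $P\in\CED$ and $(F,b)=\varepsilon_P$ shows that $\Psi$ is well-defined on objects. To see that the two 2-functors yield an equivalence, I would check that $\Psi\Phi$ is the identity on $\CED$ on the nose, since both the underlying doctrine and the 1-cells/2-cells are preserved unchanged. For $\Phi\Psi$ acting on an algebra $(P,(F,b))$, I get $(P,\varepsilon_P)$, but the proposition just recalled says $(F,b)=\varepsilon_P$, so again this is the identity on objects; on 1-cells this is where pseudo-idempotence does its work, giving a canonical invertible $\mT_d$-transformation between the given structure 2-cell of a $\mT_d$-morphism and the one produced by $\Phi$, which is unique and hence coherent.

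The only subtle point, and what I expect to be the main obstacle, is verifying that the canonical 2-cell provided by pseudo-idempotence on 1-cells of $\CED$ assembles into a genuine 2-functor (functoriality in composition and in 2-cells), and that the resulting comparison $\Phi\Psi \Rightarrow \id$ satisfies the axioms of a 2-natural isomorphism. Both are formal consequences of the uniqueness clause in the definition of pseudo-idempotence: any two fillers with the required property must coincide, so composites and whiskerings match on the nose. Once this bookkeeping is done, the equivalence $\alg{\mT_d}\equiv \CED$ follows immediately, and indeed the proof reduces to invoking the two preceding propositions together with pseudo-idempotence, exactly in the style of Theorem \ref{theorem comp. comp. is monadic}.
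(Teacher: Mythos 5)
Your proposal is correct and matches the paper's (implicit) argument: the theorem is stated there as an immediate consequence of the two preceding propositions together with pseudo-idempotence, which is exactly the combination you spell out. The only extra observation worth making is that since $\varepsilon$ is already $2$-natural, the filler on $1$-cells of $\CED$ is simply the identity $2$-cell, with uniqueness guaranteed by the property-likeness of $\mT_d$.
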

Therefore, again as in the case of the comprehension completion, we have that the comprehensive diagonal completion is 2-monadic.

\section{Elementary doctrines with quotients}\label{section doctrine with quot}
In this section we consider the completion with quotients of an
elementary doctrine introduced in \cite{QCFF,EQC}, and we show that this construction is 2-monadic and that its 2-monad is lax-idempotent.

Given an elementary doctrine $\doctrine{\mC}{P}$, an object $A$ in
$\mC$, and an object $\rho$ in $P(A\times A)$, we say that $\rho$ is a
$P$-\bemph{equivalence relation on} $A$ if it satisfies
\begin{itemize}
\item \bemph{reflexivity:} $\delta_A\leq \rho$;
\item \bemph{symmetry:} $\rho\leq P_{\angbr{\pr_2}{\pr_1}}(\rho)$, for $\freccia{A\times A}{\pr_1,\pr_2}{A}$ the first and second projection, respectively;
\item \bemph{transitivity:} $P_{\angbr{\pr_1}{\pr_2}}(\rho)\wedge P_{\angbr{\pr_2}{\pr_3}}(\rho)\leq P_{\angbr{\pr_1}{\pr_3}}(\rho)$ for \[\freccia{A\times A\times A }{\pr_1,\pr_2,\pr_3}{A}\] the first, second, and third projection, respectively.
\end{itemize}

For an elementary doctrine $\doctrine{\mC}{P}$, the object $\delta_A$
is a $P$-equivalence relation, and for every morphism
$\freccia{A}{f}{B}$, the functor
\[\freccia{P(B\times B)}{P_{f\times f}}{P(A\times A)}\] 
takes a $P$-equivalence relation $\sigma$ on
$B$ to a $P$-equivalence relation on $A$. The $P$-\bemph{kernel
equivalence relation of} $\freccia{A}{f}{B}$ is the object 
$P_{f\times f}(\delta_B)$, which  is a $P$-equivalence relation on $A$. 

\begin{definition}
Let $\doctrine{\mC}{P}$ be an elementary doctrine, and let $\rho$ be
an $P$-equivalence relation on $A$. A $P$-\bemph{quotient} of $\rho$
is a morphism $\freccia{A}{q}{C}$ in $\mC$ such that 
$P_{q\times q}(\delta_C)\geq \rho$ and for every morphism
$\freccia{A}{f}{Z}$ 
such that $P_{f\times f }(\delta_Z)\geq \rho$, there exists a unique
morphism $\freccia{C}{g}{Z}$ such that $g\circ q=f$.
\end{definition}
A quotient  $\freccia{A}{q}{C}$ of $\rho$ is said \bemph{effective} if $P_{q\times q}(\delta_D)=\rho$. 
We say that such a $P$-quotient is \bemph{stable} if in every pullback
\[\comsquare{A'}{C'}{A}{C}{q'}{f'}{f}{q}\] 
in $\mC$, the morphism $\freccia{A'}{q'}{C'}$ is a P-quotient.

\begin{definition}
Given an elementary doctrine $\doctrine{\mC}{P}$ and a $P$-equivalence relation $\rho$ on an object $A$ in $\mC$, the partial order of \bemph{descent data} $\des_{\rho}$ is the suborder of $P(A)$ of those $\alpha$ such that
\[ P_{\pr_1}(\alpha)\wedge \rho \leq P_{\pr_2}(\alpha)\]
where $\freccia{A\times A}{\pr_1,\pr_2}{A}$ are projections.
\end{definition}

\begin{remark}
Let $\doctrine{\mC}{P}$ be an elementary doctrine and consider the $P$-kernel $\rho=P_{f\times f}(\delta_B)$, for $\freccia{A}{f}{B}$. The functor $\freccia{P(B)}{P_f}{P(A)}$ takes values in $\des_{\rho}\subseteq P(A)$.
\end{remark}
\begin{definition}
Let $\doctrine{\mC}{P}$ be an elementary doctrine. An arrow $\freccia{A}{f}{B}$ is called \bemph{of effective descent} if the functor $\freccia{P(B)}{P_f}{\des_{\rho}}$, where $\rho=P_{f\times f}(\delta_B)$ is an isomorphism.
\end{definition}

Consider the 2-full 2-subcategory $\QED$ of $\ED$ whose objects are
the elementary doctrines 
$\doctrine{\mC}{P}$ with stable effective quotients of $P$-equivalence relations and of effective descent. 1-cells of $\QED$ are those 1-cells of $\ED$ which preserve quotients, and the 2-cells of $\QED$ are the same of $\ED$. 

As in the case of comprehensions, from now on we consider doctrines with a choice of quotients.
 
Let $\doctrine{\mC}{P}$ be an elementary doctrine, and consider the
category $\mR_P$ of \bemph{P-equivalence relation}:
\begin{itemize}
\item \textbf{an object} of $\mR_P$ is a pair $(A,\rho)$ such that $\rho$ is a $P$-equivalence relation on $A$;
\item \textbf{a morphism} $\freccia{(A,\rho)}{f}{(B,\sigma)}$ is a morphism $\freccia{A}{f}{B}$ such that $\rho\leq P_{f\times f}(\sigma)$.
\end{itemize}
The indexed poset $\doctrine{\mR_P}{P_q}$ will be given by the categories of descent data:
\[ P_q(A,\rho)=\des_{\rho} \]
and for every morphism $\freccia{(A,\rho)}{f}{(B,\sigma)}$ we define
\[ P_q(f)=P(f)\]
This is a well defined elementary doctrine, see \cite[Lemma 4.2]{EQC},
and it has descent quotients of $P$-equivalence relations, see
\cite[Lemma 4.4]{EQC}.

Following the structure of Sections \ref{section comprensive diag} and \ref{sec doctrine with comp} we prove that the assignment $\funQ(P)=P_q$ can be extended to 2-functor 
\[\freccia{\ED}{\funQ}{\QED}\]
and we start defining how it acts on the 1-cells and 2-cells in $\ED$.

Let $\doctrine{\mC}{P}$ and $\doctrine{\mD}{R}$ be elementary doctrines, and consider a 1-cell $(F,b)$:
\[\duemorfismo{\mC}{P}{F}{b}{\mD}{R}\]
We want to prove that the pair $(\ovln{F},\ovln{b})$ where:
\begin{itemize}
\item $\ovln{F}(A,\rho)$ is $(FA,\RFinverso{b_{A\times A}(\rho))}$ for every $A\in \mR_P$;
\item $\ovln{F}(f)$ is $F(f)$ for every $\freccia{(A,\rho)}{f}{(B,\sigma)}$;
\item $\ovln{b}$ is $b$ restricted to the categories of descent data;
\end{itemize}
is a 2-morphism in $\QED$:
\[\xymatrix{
\mR_P^{op} \ar[rrd]^{P_q}_{}="a" \ar[dd]_{\ovln{F}^{op}}\\
&& \infsl\\
\mR_R^{op}  \ar[rru]_{R_q}^{}="b"
\ar_{\ovln{b}}  "a";"b"}\]

\begin{lemma}\label{bar_F is well def on objects}
Let $(A,\rho)$ be an object in $ \mR_P$ and let $\freccia{A\times A}{\pr_1,\pr_2}{A}$ be the two projections. Then $\RFinverso{b_{A\times A}(\rho)}$ is a $P$-equivalence relation on $FA$.
\end{lemma}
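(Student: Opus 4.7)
The plan is to verify the three axioms (reflexivity, symmetry, transitivity) for $\RFinverso{b_{A\times A}(\rho)}$ as an $R$-equivalence relation on $FA$, by transporting the corresponding properties of $\rho$ along the monotone map $b_{A\times A}$ and then reindexing along the isomorphism $\phi := \langle F(\pr_1),F(\pr_2)\rangle \colon F(A\times A) \to FA\times FA$, which is invertible precisely because $F$ preserves binary products.

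For reflexivity, I would apply $b_{A\times A}$ to $\delta_A\leq \rho$ and invoke the 1-cell condition $b_{A\times A}(\delta_A)=R_{\phi}(\delta_{FA})$. Reindexing along $\phi^{-1}$ and using that $R_{\phi^{-1}}R_{\phi}=\id$ then yields $\delta_{FA}\leq \RFinverso{b_{A\times A}(\rho)}$. For symmetry, I would apply $b_{A\times A}$ to $\rho\leq P_{\langle\pr_2,\pr_1\rangle}(\rho)$ and use naturality of $b$ to get $b_{A\times A}(\rho)\leq R_{F\langle\pr_2,\pr_1\rangle}(b_{A\times A}(\rho))$; then the key identity, obtained from the fact that $F$ preserves products, is
\[
F(\langle\pr_2,\pr_1\rangle)\circ \phi^{-1}\;=\;\phi^{-1}\circ \langle \pr_2',\pr_1'\rangle,
\]
where $\pr_i'\colon FA\times FA\to FA$ are the projections. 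Reindexing along $\phi^{-1}$ and using this identity to rewrite the composite gives the symmetry inequality.

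Transitivity is analogous but with three projections. Here I would set $\psi:=\langle F(\pr_1),F(\pr_2),F(\pr_3)\rangle\colon F(A^3)\to (FA)^3$, again an isomorphism since $F$ preserves products. Apply $b_{A^3}$ to the transitivity inequality for $\rho$, using that $b_{A^3}$ preserves binary meets and is natural. The same trick as in the symmetric case --- rewriting $F(\langle\pr_i,\pr_j\rangle)\circ \phi^{-1}$ on $F(A^2)$ in terms of $\phi^{-1}$ and the projections $\langle\pr_i',\pr_j'\rangle$ on $(FA)^3\to (FA)^2$, obtained using $\psi$ and the fact that $\phi^{-1}$ is determined by product preservation --- concludes the proof.

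The only non-routine point is the bookkeeping needed to compare composites of the form $F\langle\pr_i,\pr_j\rangle \circ \phi^{-1}$ with $\phi^{-1}\circ \langle\pr_i',\pr_j'\rangle$ (and the analogous identities on $(FA)^3$), all of which follow mechanically from the universal property of products together with the hypothesis that $F$ preserves them. Everything else is a direct application of the monotonicity of $b$, its preservation of finite meets and the fibered equality, and the functoriality of reindexing.
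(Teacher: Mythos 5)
Your proposal is correct and follows essentially the same route as the paper: apply $b_{A\times A}$ to each axiom for $\rho$, use the $1$-cell condition $b_{A\times A}(\delta_A)=R_{\langle F(\pr_1),F(\pr_2)\rangle}(\delta_{FA})$ together with naturality of $b$, and reindex along the inverse of $\langle F(\pr_1),F(\pr_2)\rangle$, which is invertible because $F$ preserves products. The paper only writes out the reflexivity case and states that symmetry and transitivity are "proved similarly," so your explicit treatment of the product-comparison identities such as $F(\langle\pr_2,\pr_1\rangle)\circ\phi^{-1}=\phi^{-1}\circ\langle\pr_2',\pr_1'\rangle$ simply fills in details the paper omits.
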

\begin{proof}
Reflexivity: $\rho$ is an equivalence relation on $A$ implies $b_{A\times A}(\delta_A)\leq b_{A\times A}(\rho)$ and by definition of $b_{A\times A}$ we have $\RFnormale{\delta_{FA}}\leq b_{A\times A}(\rho)$ .
Since $F$ preserves products $\angbr{F(\pr_1)}{F(\pr_2)}$ is an isomorphism. So
\[ \delta_{FA}\leq\RFinverso{b_{A\times A}(\rho)}.\]
Symmetry and transitivity are proved similarly.
\end{proof}
\begin{lemma}\label{bar_F is well def on morphisms}
Let $\freccia{(A,\rho)}{f}{(B,\sigma)}$ be a morphism in $\mR_P$, and let $\freccia{A\times A}{\pr_i}{A}$ and $\freccia{B\times B}{\pr_i'}{B}$, $i=1,2$ be the projections. Then \[\freccia{(FA, \RFinverso{b_{A\times A}(\rho)})}{F(f)}{(FB,R_{\langle F(\pr_1'),F(\pr_2') \rangle^{-1}}(b_{B\times B}(\sigma)))}\] is a morphism in $\mR_R$.
\end{lemma}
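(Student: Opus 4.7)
The plan is to unfold the definitions of morphism in $\mR_R$: what must be checked is the inequality
\[\RFinverso{b_{A\times A}(\rho)} \;\leq\; R_{F(f)\times F(f)}\bigl(R_{\langle F(\pr_1'),F(\pr_2')\rangle^{-1}}(b_{B\times B}(\sigma))\bigr).\]

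First, since $f\colon (A,\rho)\to (B,\sigma)$ is a morphism in $\mR_P$, one has $\rho\leq P_{f\times f}(\sigma)$. Applying the monotone map $b_{A\times A}$ and invoking naturality of $b$, which gives $b_{A\times A}\circ P_{f\times f}=R_{F(f\times f)}\circ b_{B\times B}$, one obtains
\[b_{A\times A}(\rho)\leq R_{F(f\times f)}(b_{B\times B}(\sigma)).\]

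Next, since $F$ preserves binary products, the projection identities $\pr_i'\circ (f\times f)=f\circ \pr_i$ (for $i=1,2$) yield
\[\langle F(\pr_1'),F(\pr_2')\rangle \circ F(f\times f) \;=\; (F(f)\times F(f))\circ \langle F(\pr_1),F(\pr_2)\rangle,\]
so that $F(f\times f)=\langle F(\pr_1'),F(\pr_2')\rangle^{-1}\circ (F(f)\times F(f))\circ \langle F(\pr_1),F(\pr_2)\rangle$ in $\mD$. Passing to the reindexings along $R$ and using that $R_{\langle F(\pr_1),F(\pr_2)\rangle^{-1}}$ is the inverse of $R_{\langle F(\pr_1),F(\pr_2)\rangle}$ (again because $F$ preserves products), I apply $R_{\langle F(\pr_1),F(\pr_2)\rangle^{-1}}$ to both sides of the previous inequality. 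The left-hand side becomes $\RFinverso{b_{A\times A}(\rho)}$, while the right-hand side collapses, using functoriality of reindexing and cancellation of the product-iso, to $R_{F(f)\times F(f)}\bigl(R_{\langle F(\pr_1'),F(\pr_2')\rangle^{-1}}(b_{B\times B}(\sigma))\bigr)$, yielding exactly the desired inequality.

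The argument is essentially a bookkeeping exercise in reindexing along an isomorphism, combined with the naturality of $b$. I do not expect any genuine obstacle: the only point requiring care is the management of variance when the canonical product-iso is inverted, but this is routine and parallels the verification for the reflexive, symmetric, and transitive conditions carried out in Lemma~\ref{bar_F is well def on objects}.
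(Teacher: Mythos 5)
Your proposal is correct and follows essentially the same route as the paper's proof: reduce to $\rho\leq P_{f\times f}(\sigma)$, apply $b_{A\times A}$ together with naturality to get $b_{A\times A}(\rho)\leq R_{F(f\times f)}(b_{B\times B}(\sigma))$, then reindex along the inverse product-comparison isomorphism using the identity relating $F(f\times f)$ to $F(f)\times F(f)$. The only cosmetic difference is that you state that identity before inverting rather than after, which is equivalent.
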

\begin{proof}
Since $\freccia{(A,\rho)}{f}{(B,\sigma)}$ is a 1-cell, $\rho \leq P_{f\times f}(\sigma)$. Thus \[b_{A\times A}(\rho)\leq b_{A\times A}(P_{f\times f}(\sigma))=R_{F(f\times f)}(b_{B\times B}(\sigma)).\]
Hence
\[ \RFinverso{b_{A\times A}(\rho)}\leq \RFinverso{R_{F(f\times f)}(b_{B\times B}(\sigma))}.\]
Since
\[ F(f\times f)\circ \angbr{F(\pr_1)}{F(\pr_2)}^{-1}=\angbr{F(\pr_1')}{F(\pr_2')}^{-1}\circ F(f)\times F(f) \]
it is
\[ \RFinverso{b_{A\times A}(\rho)}\leq R_{F(f)\times F(f)}(R_{\langle F(\pr_1'),F(\pr_2') \rangle^{-1}}(b_{B\times B}(\sigma))).
\]

\end{proof}

\begin{remark}\label{bar_b=b}
Consider $(A,\rho)\in \mR_P$, if $\alpha\in {\des}_{\rho}$ then \[b_A(\alpha)\in {\des}_{\RFinverso{b_{A\times A}(\rho)}}.\]
\end{remark}

\begin{corollary}
Given $(F,b)\in \ED(P,R)$ then $(\ovln{F},\ovln{b})\in \QED(P_q,R_q)$ .
\end{corollary}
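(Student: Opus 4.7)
The corollary follows from the three preparatory results together with a verification of the remaining structural conditions. I would proceed in two broad steps: first, show that $(\ovln{F}, \ovln{b})$ is a 1-cell in $\ED(P_q, R_q)$, and then verify preservation of quotients so as to land in $\QED$.

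For the first step, functoriality of $\ovln{F}\colon \mR_P \to \mR_R$ and product preservation are inherited from $F$, since $\ovln{F}(f) = F(f)$ on underlying arrows (the compatibility with the equivalence-relation constraints being Lemma \ref{bar_F is well def on morphisms}) and since products in $\mR_P$ are computed on the underlying objects with the product $P$-equivalence relation built via reindexing along projections. Naturality of $\ovln{b}\colon P_q \Rightarrow R_q \circ \ovln{F}^{\op}$ and preservation of finite meets are inherited from the corresponding properties of $b$, once one uses Remark \ref{bar_b=b} to see that $\ovln{b}$ is the fiberwise restriction of $b$ and that $P_q$, $R_q$ act on morphisms exactly as $P$ and $R$ do. Preservation of the fibered equality then reduces to computing both sides of the identity $\ovln{b}_{(A,\rho)\times(A,\rho)}(\delta^q_{(A,\rho)}) = (R_q)_{\langle \ovln{F}(\pr_1), \ovln{F}(\pr_2) \rangle}(\delta^q_{\ovln{F}(A,\rho)})$ using the explicit form of $\delta^q$ in the quotient completion, together with the identity $R_{\langle F(\pr_1), F(\pr_2)\rangle}(\RFinverso{b_{A\times A}(\rho)}) = b_{A \times A}(\rho)$ which falls out of the fact that $\angbr{F(\pr_1)}{F(\pr_2)}$ is an isomorphism in $\mD$.

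For the second step, the canonical stable effective quotient in $\mR_P$ of a $P_q$-equivalence relation arising on $(A,\rho)$ is presented by an identity morphism of $\mC$ viewed as an $\mR_P$-arrow between two objects whose equivalence data differ, as in \cite[Lemma 4.4]{EQC}. Because $\ovln{F}$ sends identity morphisms to identity morphisms and acts as $F$ on underlying arrows, its image is the corresponding canonical quotient in $\mR_R$ for the object $\ovln{F}(A,\rho)$, so quotients are preserved. The main obstacle is the bookkeeping in the fibered-equality check, where the explicit form of $\delta^q_{(A,\rho)}$ in $P_q$ must be unfolded carefully and matched against $\delta^q_{\ovln{F}(A,\rho)}$; once this is in place, the rest of the argument is a direct transcription of the preceding lemmas and of the existing 1-cell structure on $(F,b)$.
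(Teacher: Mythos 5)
Your proposal is correct and follows essentially the same route as the paper: the fibered-equality check rests on the same identity $R_{\angbr{F(\pr_1)}{F(\pr_2)}}(\RFinverso{b_{A\times A}(\rho)})=b_{A\times A}(\rho)$ (which the paper draws from Remark \ref{bar_b=b} and the invertibility of the pairing), the 1-cell structure is assembled from Lemmas \ref{bar_F is well def on objects} and \ref{bar_F is well def on morphisms} exactly as in the text, and the quotient-preservation step is the paper's observation that the canonical $P_q$-quotient of $\tau$ on $(A,\rho)$ is the identity arrow $\freccia{(A,\rho)}{\id_A}{(A,\tau)}$, sent by $\ovln{F}$ to the corresponding canonical $R_q$-quotient.
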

\begin{proof}
By Remark \ref{bar_b=b} and \cite[Lemma 4.2]{EQC}
\[ b_{A\times A}(\rho)=\RFnormale{\RFinverso{b_{A\times A}(\rho)}}\]
So
\[\ovln{b}_{(A,\rho)\times (A,\rho)}(\delta_{(A,\rho)})=(R_q)_{\angbr{\ovln{F}(\pr_1)}{\ovln{F}(\pr_2)}}(\delta_{\ovln{F}(A,\rho)}).\]
By Lemma \ref{bar_F is well def on morphisms} and Lemma \ref{bar_F is
  well def on objects} we can conclude that $(\ovln{F},\ovln{b})\in
\ED(P_q,R_q)$. It remains to verify that $\ovln{F}$ preserves all the
quotients.

Consider a $P_q$-equivalence relation $\tau$ on $(A,\rho)$. A $P_q$-quotient of $\tau$ is
\[ \freccia{(A,\rho)}{\id_A}{(A,\tau)}\]
and
\[ \freccia{(FA,\RFinverso{b_{A\times A}(\rho)}}{\id_{FA}}{(FA,\RFinverso{b_{A\times A}(\tau)}}\]
is a $R_q$-quotient of $\RFinverso{b_{A\times A}(\tau)}$. So $\ovln{F}$ preserves quotients, and $(\ovln{F},b)$ is a 1-cell in $\QED$.
\end{proof}

\begin{proposition}\label{prop Q is well define on 2-cells}
Let $\theta$ be a morphism in $\ED(P,R)$
\[ \freccia{(F,b)}{\theta}{(G,c)}.\]
Then $\theta$ is also a morphism in $\QED(P_q,R_q)$
\[\freccia{(\ovln{F},\ovln{b})}{\theta}{(\ovln{G},\ovln{c})}.\]
\end{proposition}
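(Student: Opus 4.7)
The plan is to unpack what it means to be a 2-cell in $\QED$ and reduce each required condition to the hypothesis that $\theta$ is a 2-cell in $\ED$. A 2-cell in $\QED(P_q,R_q)$ from $(\ovln{F},\ovln{b})$ to $(\ovln{G},\ovln{c})$ is, first, a natural transformation $\duefreccia{\ovln{F}}{\theta}{\ovln{G}}$ (so for each $(A,\rho)\in\mR_P$ the component $\theta_A\colon FA\to GA$ must genuinely live as a morphism $\ovln{F}(A,\rho)\to\ovln{G}(A,\rho)$ in $\mR_R$) and, second, the inequality $\ovln{b}_{(A,\rho)}(\alpha)\leq (R_q)_{\theta_A}(\ovln{c}_{(A,\rho)}(\alpha))$ must hold for every $\alpha\in P_q(A,\rho)=\des_{\rho}$.

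For the first part, the task is to verify
\[\RFinverso{b_{A\times A}(\rho)}\;\leq\; R_{\theta_A\times\theta_A}\!\bigl(\RGinverso{c_{A\times A}(\rho)}\bigr).\]
The hypothesis that $\theta$ is a 2-cell in $\ED$ gives us $b_{A\times A}(\rho)\leq R_{\theta_{A\times A}}(c_{A\times A}(\rho))$, so applying the reindexing along $\angbr{F(\pr_1)}{F(\pr_2)}^{-1}$ yields
\[\RFinverso{b_{A\times A}(\rho)}\;\leq\; R_{\theta_{A\times A}\circ\angbr{F(\pr_1)}{F(\pr_2)}^{-1}}\!\bigl(c_{A\times A}(\rho)\bigr).\]
The key identity is then
\[\angbr{G(\pr_1)}{G(\pr_2)}\circ\theta_{A\times A}=(\theta_A\times\theta_A)\circ\angbr{F(\pr_1)}{F(\pr_2)},\]
which follows from the naturality of $\theta$ on the projections $\pr_1,\pr_2\colon A\times A\to A$ together with the fact that $F$ and $G$ preserve binary products. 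Rearranging this identity converts the composite $\theta_{A\times A}\circ\angbr{F(\pr_1)}{F(\pr_2)}^{-1}$ into $\angbr{G(\pr_1)}{G(\pr_2)}^{-1}\circ(\theta_A\times\theta_A)$, giving exactly the inequality required for $\theta_A$ to be a morphism in $\mR_R$. Naturality of $\theta$ at arbitrary morphisms $\freccia{(A,\rho)}{f}{(B,\sigma)}$ of $\mR_P$ is inherited directly from naturality of $\theta$ as a transformation between $F$ and $G$.

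For the second part, the definitions give $\ovln{b}_{(A,\rho)}(\alpha)=b_A(\alpha)$, $\ovln{c}_{(A,\rho)}(\alpha)=c_A(\alpha)$ and $(R_q)_{\theta_A}=R_{\theta_A}$ on descent data, so the required inequality $b_A(\alpha)\leq R_{\theta_A}(c_A(\alpha))$ is literally the 2-cell condition that $\theta$ already satisfies in $\ED$. Thus everything is inherited.

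The only genuinely non-trivial step is the product-preservation manipulation used to express the reindexing along $\theta_{A\times A}$ in terms of $\theta_A\times\theta_A$; this is the main obstacle, but it is a standard computation and parallels the argument already used in the proofs of Lemmas on $(\ovln{F},\ovln{b})$ being a 1-cell, so no new ideas are needed.
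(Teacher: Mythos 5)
Your proposal is correct and follows essentially the same route as the paper: use the 2-cell inequality $b_{A\times A}(\rho)\leq R_{\theta_{A\times A}}(c_{A\times A}(\rho))$, reindex along $\angbr{F(\pr_1)}{F(\pr_2)}^{-1}$, and invoke naturality of $\theta$ together with product preservation to rewrite the composite as $R_{\theta_A\times\theta_A}$ applied to $\RGinverso{c_{A\times A}(\rho)}$, with the descent-data inequality then being literally the original 2-cell condition. You merely make explicit the identity $\angbr{G(\pr_1)}{G(\pr_2)}\circ\theta_{A\times A}=(\theta_A\times\theta_A)\circ\angbr{F(\pr_1)}{F(\pr_2)}$ that the paper leaves implicit under the phrase ``since $\theta$ is a natural transformation.''
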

\begin{proof}
We must prove that for every $(A,\rho)\in \mR_P$ 
\[\freccia{(FA,\RFinverso{b_{A\times A}(\rho)})}{\theta_A}{(GA,\RGinverso{c_{A\times A}(\rho)})}\]
is a morphism in $\mR_R$. Indeed, by definition of 2-morphism we have $b_{A\times A}(\rho)\leq R_{\theta_{A\times A}}(c_{A\times A}(\rho))$
then
\[ \RFinverso{b_{A\times A}(\rho)}\leq \RFinverso{R_{\theta_{A\times A}}(c_{A\times A}(\rho))}\]
and, since $ \theta$ is a natural transformation, 
\[ \RFinverso{b_{A\times A}(\rho)}\leq R_{\theta_{A}\times \theta_A}(\RGinverso{(c_{A\times A}(\rho))}).\]
Finally for every $\alpha\in \des_{\RFinverso{b_{A\times A}(\rho)}}$ we have
\[\ovln{b}_A(\alpha)\leq (R_q)_{\theta_A}(\ovln{c}_A(\alpha))\]
because $\ovln{b}_A(\alpha)=b_A(\alpha)$, $\ovln{c}_A(\alpha)=c_A(\alpha)$ and $R_q(\theta_A)=R(\theta_A)$.
\end{proof}

\begin{proposition}
The assignment 
\[\freccia{\ED(P,R)}{\funQ_{P,R}}{\QED(P_q,R_q)}\]
which maps $(F,b)$ into $(\ovln{F},\ovln{b})$ and a 2-cell $\freccia{(F,b)}{\theta}{(G,c)}$ into $\freccia{(\ovln{F},\ovln{b})}{\theta}{(\ovln{G},\ovln{c})}$ is a functor and
\[ \freccia{\ED}{\funQ}{\QED}\]
is a 2-functor with the assignment $\funQ(P)=P_q$.
\end{proposition}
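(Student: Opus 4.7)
The plan is to reduce every functoriality axiom for $\funQ_{P,R}$ and $\funQ$ to previously verified data about the underlying pair $(F,b)$ in $\ED$, exploiting the fact that on 2-cells the action is the identity (by Proposition \ref{prop Q is well define on 2-cells}) and that on objects of $\mR_P$ it is given by the explicit reindexing formula $(A,\rho)\mapsto (FA,\RFinverso{b_{A\times A}(\rho)})$.

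For the functoriality of $\freccia{\ED(P,R)}{\funQ_{P,R}}{\QED(P_q,R_q)}$, suppose $\theta,\eta$ are vertically composable 2-cells in $\ED(P,R)$. By Proposition \ref{prop Q is well define on 2-cells}, the very same natural transformations underlie 2-cells in $\QED(P_q,R_q)$, and vertical composition is computed component-wise in both 2-categories, so $\funQ_{P,R}(\eta\circ\theta)=\eta\circ\theta=\funQ_{P,R}(\eta)\circ\funQ_{P,R}(\theta)$, and identities are trivially preserved.

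For 2-functoriality of $\funQ$, preservation of identity 1-cells is immediate: $\id_P=(\id_{\mC},\id)$ and $\ovln{\id_\mC}$ sends $(A,\rho)$ to $(A,P_{\id}(\rho))=(A,\rho)$ with $\ovln{\id}$ the identity, whence $\funQ(\id_P)=\id_{P_q}$. For the composition of 1-cells, given $\freccia{P}{(F,b)}{R}$ and $\freccia{R}{(G,c)}{S}$ with composite $(GF,\,cF\circ b)$, I would verify directly that
\[\ovln{GF}(A,\rho)=\ovln{G}(\ovln{F}(A,\rho))\]
in $\mR_S$. The second component of the left-hand side is $S_{\langle GF(\pr_1),GF(\pr_2)\rangle^{-1}}(c_{FA\times FA}(b_{A\times A}(\rho)))$, while on the right-hand side it reads $S_{\langle G(\pr'_1),G(\pr'_2)\rangle^{-1}}(c_{FA\times FA}(R_{\langle F(\pr_1),F(\pr_2)\rangle^{-1}}(b_{A\times A}(\rho))))$, where $\pr'_i$ are the projections out of $FA\times FA$. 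Equality rests on two observations: (a) the comparison isomorphisms for composite product-preserving functors satisfy $\angbr{GF(\pr_1)}{GF(\pr_2)}=\angbr{G(\pr'_1)}{G(\pr'_2)}\circ G\angbr{F(\pr_1)}{F(\pr_2)}$; (b) naturality of $c$ gives $c_{FA\times FA}\circ R_{\langle F(\pr_1),F(\pr_2)\rangle^{-1}}=S_{G\langle F(\pr_1),F(\pr_2)\rangle^{-1}}\circ c_{F(A\times A)}$. Splicing these together yields the required equality on objects, while on morphisms both sides agree because $\ovln{GF}(f)=GF(f)=\ovln{G}(\ovln{F}(f))$; the equality of $\ovln{cF\circ b}$ with the composite $\ovln{c}\cdot \ovln{b}$ then holds by restriction from the corresponding equality in $\ED$, since $\ovln{(\cdot)}$ is just the restriction of $b$ and $c$ to descent data (and those restrictions are well-typed by Remark \ref{bar_b=b}).

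Compatibility with 2-cells---both vertical and horizontal (whiskering) composites---follows at once because $\funQ$ leaves the underlying natural transformation of a 2-cell unchanged, and the horizontal composites in $\QED$ are computed using exactly the same underlying components as in $\ED$. The only step that requires real bookkeeping is (b), where tracking the comparison isomorphisms through the reindexings is notation-heavy; I expect this to be the main obstacle, but once the two identities above are established the remaining checks are routine.
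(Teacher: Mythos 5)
Your proposal is correct and takes essentially the same route as the paper, which states this proposition as a direct consequence of the preceding lemmas and leaves exactly the routine verifications you carry out (identity-on-2-cells, componentwise vertical composition, and the compatibility of the reindexed relation with composition of 1-cells via the comparison isomorphisms and naturality of $c$). One minor notational slip: in your formula for the left-hand side the relevant component of the composite natural transformation is $c_{F(A\times A)}$ rather than $c_{FA\times FA}$, but your identity (b) already distinguishes the two correctly, so the computation goes through unchanged.
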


Now we prove that the 2-functor $\freccia{\ED}{\funQ}{\QED}$ is left adjoint to the forgetful 2-functor. 

To simplify the notation, given an object $A$ and a $P$-equivalence relation $\rho$, the quotient of $\rho$ is denoted by $\freccia{A}{q_{\rho}}{\qot{A}{\rho}}$. Observe that, as in the case of comprehensions, we assume that an elementary doctrine with quotients is equipped with a choice of quotients.

First, observe that for every elementary doctrine $P$ there is a natural embedding  
\[\duemorfismo{\mC}{P}{L_P}{{l_P}}{\mR_P}{{P_q}}\]
of elementary doctrines, where $\freccia{\mC}{L_P}{\mR_P}$ acts as $A\mapsto (A,\delta_A)$, and $\freccia{P(A)}{(l_P)_A}{P_q(A,\delta_A)}$ sends $\alpha\mapsto \alpha$. It is direct to check that  $(L_P,l_P)$ is a 1-cell of elementary doctrines.

Given an elementary doctrine $P$ of $\QED$, we can define a morphism
\[\duemorfismo{\mR_P}{{P_q}}{V_P}{{v_P}}{\mC}{P}\]
in $\QED$ as follow: the functor $\freccia{\mR_P}{V_P}{\mC}$ sends an object $(A,\rho)$ of $\mR_P$ to the object $\qot{A}{\rho}$ of, and  an arrow $\freccia{(A,\rho)}{f}{(B,\sigma)}$ in $\mR_P$ is sent to the arrow $\freccia{\qot{A}{\rho}}{V_P(f)}{\qot{B}{\sigma}}$ defined as the vertical arrow $a$ of the following diagram
\[
\xymatrix@+2pc{
&& \qot{A}{\rho}\ar[d]^a\\
A \ar[rru]^{q_{\rho}}\ar[r]_f &B \ar[r]_{q_{\sigma}} & \qot{B}{\sigma}.
}\]
The arrow $V_P(f)=a$ exists by the universal property of quotients, because $\rho\leq P_{f\times f}(\sigma)\leq P_{f\times f} P_{q_{\sigma}\times q_{\sigma}}(\delta_{\qot{B}{\sigma}})$. The natural transformation $v_P$ is defined by the following components: for every object $(A,\rho)$ of $\mR_P$, we have that $\freccia{P_q(A,\rho)}{(v_P)_{(A,\rho)}}{P(\qot{A}{\rho})}$ acts as $\alpha\mapsto (P_{q_{\rho}})^{-1}(\alpha)$. Notice that $\freccia{P(\qot{A}{\rho})}{P_{q_{\rho}}}{\des_{\rho}}$ is invertible because $P$ is a doctrine of $\QED$, and then quotients are effective and also effective descent, i.e. $P(\qot{A}{\rho})\cong \des_{\rho}$. 
\begin{lemma}
With the previous assignments $\freccia{P_q}{(V_P,v_P)}{P}$ is a 1-cell of $\QED$.
\end{lemma}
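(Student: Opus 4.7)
The plan is to verify the three conditions for being a 1-cell in $\QED$: (a) $V_P$ is a product-preserving functor; (b) $v_P$ is a natural transformation preserving the elementary structure (finite infima and fibred equalities); (c) $V_P$ preserves quotients. The key tool throughout is the universal property of the chosen quotients, together with the fact that in $\QED$ the quotient maps $q_\rho$ are effective descent, so $\freccia{P(\qot{A}{\rho})}{P_{q_\rho}}{\des_\rho}$ is an isomorphism of inf-semilattices.

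First, I would verify functoriality of $V_P$. Given composable arrows $\frecciasopra{(A,\rho)}{f}{(B,\sigma)}$ and $\frecciasopra{(B,\sigma)}{g}{(C,\tau)}$, both $V_P(gf)$ and $V_P(g)V_P(f)$ are solutions to the universal problem of finding $\freccia{\qot{A}{\rho}}{h}{\qot{C}{\tau}}$ with $h\circ q_\rho = q_\tau \circ gf$; uniqueness gives $V_P(gf)=V_P(g)V_P(f)$, and $V_P(\id)=\id$ by the same argument. Product preservation uses that $(A,\rho)\times (B,\sigma) = (A\times B,\rho\boxtimes\sigma)$ in $\mR_P$ and that stability of quotients implies $\qot{(A\times B)}{\rho\boxtimes\sigma}\cong \qot{A}{\rho}\times \qot{B}{\sigma}$; the mediating arrow is precisely $V_P$ applied to the pair of projections.

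Next, I would check naturality and preservation of the elementary structure of $v_P$. For naturality, given $\freccia{(A,\rho)}{f}{(B,\sigma)}$ and $\alpha\in\des_\sigma$, applying the iso $P_{q_\rho}$ to both sides reduces the claim to $P_f(\alpha) = P_{V_P(f)q_\rho}(P_{q_\sigma})^{-1}(\alpha)$, which holds because $V_P(f)q_\rho = q_\sigma f$ by definition of $V_P(f)$. Preservation of finite infima is immediate since $(v_P)_{(A,\rho)}$ is the inverse of the inf-semilattice iso $P_{q_\rho}$. The crucial point is preservation of fibred equalities: I claim that $\delta_{(A,\rho)}$ in $P_q(A\times A,\rho\boxtimes\rho)=\des_{\rho\boxtimes \rho}$ is $\rho$ itself, so
\[ (v_P)_{(A,\rho)\times (A,\rho)}(\delta_{(A,\rho)}) = (P_{q_\rho\times q_\rho})^{-1}(\rho) = (P_{q_\rho\times q_\rho})^{-1}(P_{q_\rho\times q_\rho}(\delta_{\qot{A}{\rho}})) = \delta_{\qot{A}{\rho}}, \]
where the middle equality uses effectivity of quotients, $P_{q_\rho\times q_\rho}(\delta_{\qot{A}{\rho}})=\rho$, and then precomposition with $\angbr{V_P(\pr_1)}{V_P(\pr_2)}$ (which is an iso by product preservation, identified with the identity up to this iso) gives the required equation.

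Finally, $V_P$ preserves quotients. A $P_q$-equivalence relation $\tau$ on $(A,\rho)$ corresponds to a $P$-equivalence relation $\tau\geq\rho$ on $A$, and its $P_q$-quotient is the 1-cell $\freccia{(A,\rho)}{\id_A}{(A,\tau)}$ in $\mR_P$. Applying $V_P$ yields the canonical factorisation $\freccia{\qot{A}{\rho}}{}{\qot{A}{\tau}}$, which is a $P$-quotient in $\mC$ by the universal property of $q_\tau$ restricted through $q_\rho$. The main obstacle I expect is the equality clause for $v_P$: one has to correctly identify $\delta_{(A,\rho)}$ inside $\des_{\rho\boxtimes\rho}$ and exploit effectiveness of quotients at exactly the right moment; all the other verifications are essentially formal consequences of the universal properties.
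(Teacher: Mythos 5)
Your proposal is correct and follows essentially the same route as the paper: functoriality and quotient-preservation via the universal property of the chosen quotients, and naturality of $v_P$ from $V_P(f)q_\rho=q_\sigma f$ together with the effective-descent isomorphism. You merely fill in details the paper leaves as ``direct to check'' (product preservation via stability, and the identification of $\delta_{(A,\rho)}$ with $\rho$ in $\des_{\rho\boxtimes\rho}$ for the fibred-equality clause), all of which are accurate.
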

\begin{proof}
First we show that $\freccia{\mR_P}{V_P}{\mC}$ is a functor. Let us consider the arrows $\frecciasopra{(A,\rho)}{f}{(B,\sigma)}$ and $\frecciasopra{(B,\sigma)}{g}{(C,\gamma)}$ of $\mR_P$. Now we show that $V_P(gf)=V_P(g)V_P(f)$. Observe that the diagram
\[\xymatrix@C=2pc{
A\ar[rrr]^{q_{\rho}}\ar[dr]_f &&& \qot{A}{\rho}\ar[d]^{V_P(f)}\\
& B\ar[rr]^{q_{\sigma}}\ar[dr]_g&&\qot{B}{\sigma}\ar[dd]^{V_P(g)}\\
&&C\ar[rd]_{q_{\gamma}}\\
&&&\qot{C}{\gamma}
}\]
commutes, and then we have that $V_P(g)V_P(f)$ is the unique arrow such that $V_P(g)V_P(f) q_{\rho}= q_{\gamma}gf$, hence it is exactly $V_P(gf)$. Moreover $V_P(\id)=\id $, and then $\freccia{\mR_P}{V_P}{\mC}$ is a functor, and it is direct to check that it preserves finite products. Therefore, we can conclude that $(V_P,v_P)$ is a 1-cell of $\ED$, because  $V_P$ is a preserving product functor, and $v_P$ is a natural transformation, whose naturality follows from the fact that every components $(v_P)_{(A,\rho)}$ is an iso, and for every 1-cell $(F,b)$, we have that $\ovln{F}(f)$ is $F(f)$ and $\ovln{b}$ acts as $b$.

One can show that $(V_P,v_P)$ is also a morphism of $\QED$, i.e. $V_P$ preserves quotients, by using the same idea of Lemma \ref{prop C is well def on 2-cell}. Since every $P_q$-equivalence relation $\tau$ on $(A,\rho)$ is also a $P$ equivalence relation on $A$, it is easy to see that the $P_q$-quotient of $\tau$ is $\freccia{(A,\rho)}{q_{\tau}=[id_A]}{(A,\tau)}$. Hence $V_P(q_{\tau})$ is the unique arrow
\[\xymatrix@+2pc{
A\ar[r]^{q_{\rho}}\ar[rd]_{q_{\tau}} &\qot{A}{\rho}\ar[d]^{V_P(q_{\tau})}\\
& \qot{A}{\tau}
} \]
which is exactly a quotient map of $\tau\in P(\qot{A}{\rho}\times \qot{A}{\rho})$. Hence we have proved that $(V_P,v_P)$ is a 1-cell of $\QED$.
\end{proof}
\begin{theorem}\label{theorem quotient completion}
The 2-functor $\freccia{\ED}{\funQ}{\QED}$ is 2-left adjoint to the forgetful functor $\freccia{\QED}{\funQ}{\ED}$. The unit of this 2-adjunction $\freccia{\id_{\ED}}{\eta}{\funU\funQ}$ is given by $\eta_P=(L_P,l_P)$ the counit $\freccia{\funQ\funU}{\varepsilon}{\id_{\QED}}$ is given by $\varepsilon_P=(V_P,v_P)$.
\end{theorem}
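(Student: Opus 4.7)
The plan is to follow the same template used in Theorems \ref{theorem comprehension completion} and \ref{theorem diag comprehension completion}: verify that $\eta$ and $\varepsilon$ are 2-natural transformations with the stated components, and then check the two triangle identities $\id_{\funQ} = \varepsilon\funQ \circ \funQ\eta$ and $\id_{\funU} = \funU\varepsilon \circ \eta\funU$. The strategy is essentially computational, unfolding the explicit descriptions of $L_P, l_P, V_P, v_P$ and leaning on Proposition \ref{prop Q is well define on 2-cells} for behaviour on 2-cells.

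First I would check 2-naturality of $\eta$. Given a 1-cell $\freccia{P}{(F,b)}{R}$ of $\ED$, both composites $\funQ(F,b) \circ \eta_P$ and $\eta_R \circ (F,b)$ send an object $A$ to $(FA, \delta_{FA})$, using that $\RFinverso{b_{A\times A}(\delta_A)} = \delta_{FA}$ since $(F,b)$ preserves fibered equalities. On arrows and on natural-transformation components the equality is immediate because $l_P, l_R$ are componentwise identities and $\ovln{F}$ agrees with $F$ on morphisms; compatibility with 2-cells follows from Proposition \ref{prop Q is well define on 2-cells}. Next, for the 2-naturality of $\varepsilon$ along a 1-cell $\freccia{P}{(F,b)}{R}$ of $\QED$, both sides send $(A,\rho)$ to (the chosen representative of) the quotient of $\RFinverso{b_{A\times A}(\rho)}$, which coincides with $F(A/\rho)$ since $(F,b)$ preserves the chosen quotients; compatibility on morphisms and on the natural transformations reduces to uniqueness in the universal property of quotients, using that each component $(v_P)_{(A,\rho)}$ is invertible.

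For the triangle identity $\funU\varepsilon \circ \eta\funU = \id_{\funU}$, evaluated at $P \in \QED$, one computes $V_P L_P(A) = A/\delta_A$. The key observation is that $\id_A$ is a $P$-quotient of $\delta_A$: for any $\freccia{A}{f}{Z}$ the hypothesis $P_{f\times f}(\delta_Z) \geq \delta_A$ is automatic by reflexivity, and the unique factorization through $\id_A$ is $f$ itself. Under the canonical choice of quotients this gives $V_P L_P = \id_{\mC}$ and $(v_P)_{(A,\delta_A)} \circ (l_P)_A = \id_{P(A)}$. For the other triangle identity $\varepsilon\funQ \circ \funQ\eta = \id_{\funQ}$ evaluated at $P \in \ED$, one unwinds that $\funQ(L_P, l_P)$ sends $(A,\rho) \in \mR_P$ to a pair $((A, \delta_A), \sigma)$ in $\mR_{P_q}$, where $\sigma$ is the $P_q$-equivalence relation induced by $\rho$ via $l_P$, and then $V_{P_q}$ computes its $P_q$-quotient; since $P_q$ has effective, stable, effective-descent quotients by construction, this quotient is $(A,\rho)$ on the nose, and the transformation part collapses to the identity using effectiveness of descent.

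The main obstacle is the careful verification of this second triangle identity: one needs to make explicit the $P_q$-equivalence relation produced by $\funQ(L_P, l_P)$ and identify its $P_q$-quotient with $(A,\rho)$ strictly, rather than only up to canonical isomorphism, under the fixed choice of quotients in $P_q$. Once that identification is in place, the corresponding equation on natural transformations is automatic from the fact that $(v_{P_q})_{((A,\delta_A),\sigma)}$ is the inverse of $(P_q)_{q_\sigma}$ which, after the identification, is itself the identity. The remaining checks are routine diagram chases once the definitions of $L_P, l_P, V_P, v_P$ are tracked through.
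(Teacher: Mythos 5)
Your proposal is correct and follows exactly the route the paper intends: the paper omits the proof of this theorem (and for the analogous Theorem \ref{theorem comprehension completion} merely asserts that 2-naturality and the triangle identities are ``direct to verify''), and your computation of the components of $\eta$ and $\varepsilon$, the use of $\RFinverso{b_{A\times A}(\delta_A)}=\delta_{FA}$, and the identification of the $P_q$-quotient of the induced relation with $(A,\rho)$ via the canonical choice $q_\tau=\id_A$ is precisely the expected verification. The strictness caveat you flag (that the chosen quotient of $\delta_A$ must be $\id_A$) is the same implicit normalization the paper uses later when it asserts $\varepsilon_P\eta_P=\id_P$ for $P$ in $\QED$.
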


As in Sections \ref{sec doctrine with comp} and \ref{section comprensive diag} consider the following 2-monad, given 2-adjunction of Theorem  \ref{theorem quotient completion}: this theorem induces a 2-monad $\freccia{\PD}{\mT_q}{\PD}$, whose unit is given by the unit of the 2-adjunction, and whose multiplication is defined by $\mu=\varepsilon \funQ$, as in 1-dimensional case.
Again, we study that 2-monad $\mT_q$ and we show that we have the equivalence of 2-categories
$$\alg{\mT_q}\equiv \QED .$$

We start by showing that every elementary doctrine of $\QED$ is a $\mT_q$-algebra . 
\begin{proposition}\label{proposition P has quotient imples T-alg
}
Let $\doctrine{\mC}{P}$ be an elementary doctrine of $\CompAdj$, then $(P,\varepsilon_P)$ is a $\gromonad$-algebra.
\end{proposition}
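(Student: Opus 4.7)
The plan is to mirror the proofs of Proposition \ref{proposition P has comp imples T-alg} and Proposition \ref{proposition P has comp diag imples T-alg}, since the statement for $\mT_q$ is formally identical to those for $\mT_c$ and $\mT_d$ and relies only on abstract 2-adjunction data. Concretely, I need to verify that the two diagrams
\[\comsquare{\mT_q^2 P}{\mT_q P}{\mT_q P}{P}{\mT_q \varepsilon_P}{\mu_P}{\varepsilon_P}{\varepsilon_P}\qquad
\xymatrix@+1pc{
P \ar[rd]_{1_P}\ar[r]^{\eta_P}& \mT_q P \ar[d]^{\varepsilon_P}\\
& P
}\]
commute in $\ED$, where the ingredients are those coming from Theorem \ref{theorem quotient completion}.

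For the associativity (multiplication) square, I would use that by construction $\mu_P = \varepsilon_{\funQ P}$. Then the square becomes an instance of the naturality square of $\varepsilon\colon \funQ\funU \Rightarrow \id_{\QED}$ applied to the 1-cell $\varepsilon_P\colon \funQ\funU P \to P$: both composites equal $\varepsilon_P \circ \varepsilon_{\funQ P}$, which is exactly the content of 2-naturality of $\varepsilon$ since $\varepsilon_P$ lives in $\QED$ (because $P$ does, by hypothesis).

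For the unit triangle, I would invoke one of the triangle identities for the 2-adjunction $\funQ \dashv \funU$ of Theorem \ref{theorem quotient completion}, namely $\id_{\funU} = \funU\varepsilon \circ \eta\funU$, evaluated at $P$: this gives precisely $\varepsilon_P \circ \eta_P = 1_P$.

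There is no real obstacle here: both conditions reduce to formal consequences of the 2-adjunction of Theorem \ref{theorem quotient completion}, just as in the comprehension and comprehensive-diagonal cases. The only mild subtlety worth checking is that $\varepsilon_P = (V_P, v_P)$ genuinely lies in $\QED$, i.e.\ that $V_P$ preserves quotients; but this was already established when $(V_P,v_P)$ was shown to be a 1-cell of $\QED$, so the argument is complete.
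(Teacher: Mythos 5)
Your proof is correct and follows essentially the same route as the paper: the multiplication square is the naturality square of $\varepsilon$ at the 1-cell $\varepsilon_P$ (using $\mu_P=\varepsilon_{\funQ P}$), and the unit axiom is the triangle identity $\id_{\funU}=\funU\varepsilon\circ\eta\funU$ from Theorem \ref{theorem quotient completion}. Your explicit mention of the triangle identity and of the fact that $\varepsilon_P$ is a 1-cell of $\QED$ only makes more precise what the paper leaves as ``similarly''.
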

\begin{proof}
The diagram
\[\comsquare{\mT_q^2 P}{\mT_q P}{\mT_q P}{P}{\mT_q \varepsilon_P}{\mu_P}{\varepsilon_P}{\varepsilon_P}\]
commutes because $\mu_P=\varepsilon_{\funQ P}$ and $\freccia{\mT_q}{\varepsilon}{\id_{\QED}}$ is a 2-natural transformation. Similarly we have that the unit axiom for strict algebras is satisfied.
\end{proof}

\begin{proposition}\label{proposition P algebras imples comprehensios}
Let $(P,(F,b))$ be a $\alg{\mT_q}$. Then the doctrine $P$ is a doctrine of $\QED$. Moreover $(F,b)=\varepsilon_P$.
\end{proposition}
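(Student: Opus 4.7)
The plan is to follow the pattern of Propositions~\ref{proposition P algebras imples comprehensios} and~\ref{remark (P,a) pseudo-T-algebra implies P in CED}. First I would apply the unit axiom $(F,b)\circ\eta_P=\id_P$: since $\eta_P=(L_P,l_P)$ with $L_P(A)=(A,\delta_A)$ and $L_P(f)=f$, this forces $F(A,\delta_A)=A$ for every $A\in\mC$, $F(f)=f$ whenever $f$ is regarded as a morphism $(A,\delta_A)\to(B,\delta_B)$ in $\mR_P$, and $b_A$ to be the identity on $P_q(A,\delta_A)=P(A)$.

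Next, for a $P$-equivalence relation $\rho$ on $A$, recall from the discussion of $(V_P,v_P)$ that the morphism $\freccia{(A,\delta_A)}{\id_A}{(A,\rho)}$ is the $P_q$-quotient of $\rho$ in $\mR_P$. Applying $F$ yields a candidate quotient map $q_\rho:=F(\id_A)\colon A\to F(A,\rho)$, which I claim is a $P$-quotient of $\rho$. For the existence of the mediating arrow: given $\freccia{A}{f}{Z}$ with $\rho\leq P_{f\times f}(\delta_Z)$, the same underlying $f$ is a morphism $(A,\rho)\to(Z,\delta_Z)$ in $\mR_P$, and $F(f)\colon F(A,\rho)\to Z$ together with functoriality of $F$ and $F\circ L_P=\id_{\mC}$ produces the factorisation $F(f)\circ q_\rho=f$.

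The main obstacle is to verify the remaining properties in the definition of $\QED$: effectivity $P_{q_\rho\times q_\rho}(\delta_{F(A,\rho)})=\rho$, uniqueness of the mediating arrow, stability under pullback, and effective descent. My strategy is to transfer them from $P_q$, which enjoys them by construction, using the fact that $(F,b)$ is a 1-cell of $\ED$ (hence preserves finite products and elementary structure) and that $b$ satisfies the associativity axiom $(F,b)\circ\mT_q(F,b)=(F,b)\circ\mu_P$ with $\mu_P=\varepsilon_{\funQ P}$. Concretely, this should produce an isomorphism $P_q(A,\rho)=\des_\rho\cong P(F(A,\rho))$ realised by $P_{q_\rho}$, which is precisely effective descent; effectivity and stability then propagate from the corresponding properties in $P_q$ since $F$ preserves the relevant limits.

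Once $P\in\QED$ has been established, the identification $(F,b)=\varepsilon_P$ is obtained exactly as in the comprehension case: the equalities $(F,b)\circ\eta_P=\id_P=\varepsilon_P\circ\eta_P$ combined with the fact that both $(F,b)$ and $\varepsilon_P$ preserve quotients, and that every object $(A,\rho)$ of $\mR_P$ arises as the $P_q$-quotient of $\id_A\colon(A,\delta_A)\to(A,\rho)$, force agreement on objects and morphisms by the uniqueness of the mediating arrow in the universal property of quotients.
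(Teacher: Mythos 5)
Your proposal follows essentially the same route as the paper: use the unit axiom to pin down $(F,b)$ on the image of $\eta_P$, observe that applying $F$ to the canonical $P_q$-quotient $\freccia{(A,\delta_A)}{\id_A}{(A,\rho)}$ yields a $P$-quotient of $\rho$ (with effectivity, stability and effective descent transferred from $P_q$), and then deduce $(F,b)=\varepsilon_P$ from $(F,b)\eta_P=\varepsilon_P\eta_P$ together with preservation of quotients and uniqueness of mediating arrows. The paper leaves the verification of the quotient axioms as ``direct to check,'' so your slightly more explicit treatment of the universal property and of the descent isomorphism is consistent with, and no weaker than, the published argument.
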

\begin{proof}
It is direct to check that given an equivalence relation $\rho\in P(A\times A)$, then $\freccia{F(A,\rho)}{F(\id_A)}{F(A,\delta_A)}$ is a quotient of $\rho$, where $\freccia{(A,\rho)}{\id_A}{(A,\delta_A)}$ is the quotient of $\rho\in P_q(A,\delta_A)$. Moreover, it is direct to show that quotients are stable, and effective. Now we show that $(F,b)=\varepsilon_P$.
By the unit axiom of algebras, we have that $(F,b)\eta_P=id_P$, but since $P$ is a doctrine of $\QED$, we also have $\varepsilon_P \eta_P=\id_P$. Therefore we have that $(F,b)\eta_P=\varepsilon_P\eta_P$ implies that $(F,b)=\varepsilon_P$, because $F(A,\rho)=F(\qot{(A,\delta)}{\rho})=\qot{(F(A,\delta_A))}{b{(\rho)}}=\qot{(\varepsilon_P\eta_P(A))}{\rho}=\varepsilon_P(A,\rho)$. Similarly one can prove that $F(f)=\varepsilon_P(f)$, because every arrow $\freccia{(A,\rho)}{f}{(B,\sigma)}$ is the unique arrow such that the following diagram commutes 
\[\xymatrix@+2PC{
(A,\delta_A)\ar[r]^{q_{\rho}}\ar[d]_f & (A,\rho)\ar[d]^{f}\\
(B,\delta_B) \ar[r]_{q_{\sigma}} & (B,\sigma).
}\]
 because $\rho\leq P_{f\times f}(\sigma)$. Since both $F$ and $\varepsilon_P$ preserve quotients, and since $F(q_{\rho})=\varepsilon_P(q_{\rho})$ and $F(\frecciasopra{(A,\delta_A)}{f}{(B,\delta_B)})=\varepsilon_P(\frecciasopra{(A,\delta_A)}{f}{(B,\delta_B)})$, then $F(\frecciasopra{(A,\rho)}{f}{(B,\sigma)})$ must be equal to the arrow $\varepsilon_P(\frecciasopra{(A,\rho)}{f}{(B,\sigma)})$ (by the unicity of the mediating arrow in the universal property of quotients). Hence $F=\varepsilon_P$. Finally it is direct to check that $b=j_P$.
\end{proof}

\begin{proposition}\label{proposition every morf extend uniquile to T-morf}
Let $P$ and $R$ be two doctrines of $\QED$, and let $\freccia{P}{(F,b)}{R}$ be a 1-cell of $\PD$, then there exists a unique 2-cell $\omega$ such that $\freccia{(P,\varepsilon_P)}{((F,b),\tau)}{(R,\varepsilon_R)}$ is a lax morphism of $\alg{\mT_q}$.
\end{proposition}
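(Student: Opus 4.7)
The plan is to mimic the strategy of Proposition~\ref{proposition every morf extend uniquile to T-morf} (the comprehension version), but now exploiting the universal property of \emph{quotients} in $R$ rather than that of comprehensions. For each object $(A,\rho)$ of $\mR_P$, one has $\varepsilon_R \mT_q(F,b)(A,\rho) = \qot{FA}{\RFinverso{b_{A\times A}(\rho)}}$ and $(F,b)\varepsilon_P(A,\rho) = F(\qot{A}{\rho})$, so I need a morphism in $\mD$ between these two objects. The natural candidate is obtained by coequalization: apply $F$ to the quotient map $q_\rho\colon A\to \qot{A}{\rho}$ to get $F(q_\rho)\colon FA\to F(\qot{A}{\rho})$, and show that $F(q_\rho)$ respects the equivalence relation $\RFinverso{b_{A\times A}(\rho)}$ on $FA$. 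Concretely, starting from $\rho\leq P_{q_\rho\times q_\rho}(\delta_{\qot{A}{\rho}})$ and applying $b$, then using naturality of $b$ and the 1-cell condition $b_{X\times X}(\delta_X)=R_{\angbr{F(\pr_1)}{F(\pr_2)}}(\delta_{FX})$, together with $F$ preserving products, one obtains $\RFinverso{b_{A\times A}(\rho)}\leq R_{F(q_\rho)\times F(q_\rho)}(\delta_{F(\qot{A}{\rho})})$. Since $R$ belongs to $\QED$, the universal property of $q_{\RFinverso{b_{A\times A}(\rho)}}$ in $R$ then yields a \emph{unique} mediating arrow
\[
\omega_{(A,\rho)}\colon \qot{FA}{\RFinverso{b_{A\times A}(\rho)}}\longrightarrow F(\qot{A}{\rho})
\]
with $\omega_{(A,\rho)}\circ q_{\RFinverso{b_{A\times A}(\rho)}}=F(q_\rho)$, and this will be my chosen 2-cell $\omega$.

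Next I would verify that $\omega$ is a natural transformation $\ovln{\omega}\colon \varepsilon_R\mT_q(F,b) \Rightarrow (F,b)\varepsilon_P$: for a morphism $f\colon (A,\rho)\to (B,\sigma)$ in $\mR_P$ the relevant square can be closed by pre-composing with the epimorphism (in the relevant coequalizer sense) $q_{\RFinverso{b_{A\times A}(\rho)}}$ and reducing to an identity between mediating arrows, using in an essential way that quotients in $\QED$ are stable and effective. The fibrewise inequality defining a 2-cell in $\CED$ then follows from the definition of $v_P$ on descent data, using the isomorphism $P(\qot{A}{\rho})\cong \des_\rho$ provided by effective descent. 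Once $\omega$ is seen to be a 2-cell in $\ED$, the two lax-morphism coherence diagrams against $\mu$ and $\eta$ need to be checked. The unit coherence is immediate because $\varepsilon_P\eta_P=\id_P$ in both doctrines, so on objects of the form $(A,\delta_A)$ the morphism $\omega_{(A,\delta_A)}$ is forced to be $\id_{FA}$. The multiplicative coherence is checked by composing with the appropriate quotient maps and again invoking uniqueness of mediating arrows into $F(\qot{A}{\rho})$.

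For uniqueness, suppose $\theta$ is any 2-cell giving $((F,b),\theta)$ the structure of a lax morphism of $\mT_q$-algebras. The unit coherence axiom forces $\theta_{(A,\delta_A)} = \id_{FA}$ for every $A$. Applying naturality of $\theta$ to the canonical quotient morphism $q_\rho\colon (A,\delta_A)\to (A,\rho)$ in $\mR_P$ yields a commutative square whose top row reads $F(q_\rho)$ and whose right-hand side is $\theta_{(A,\rho)}$, so $\theta_{(A,\rho)}\circ q_{\RFinverso{b_{A\times A}(\rho)}} = F(q_\rho)$. But this is precisely the equation defining $\omega_{(A,\rho)}$, and the universal property of the quotient in $R$ therefore forces $\theta_{(A,\rho)} = \omega_{(A,\rho)}$. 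Hence $\theta = \omega$.

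I expect the routine (but notationally dense) part to be the naturality of $\omega$ and the multiplicative coherence, where one must juggle the descriptions of $\mu_P=\varepsilon_{\funQ P}$ and the iterated application of $\ovln{F}$; the genuine content of the proof, however, is the single calculation $\RFinverso{b_{A\times A}(\rho)}\leq R_{F(q_\rho)\times F(q_\rho)}(\delta_{F(\qot{A}{\rho})})$, since everything else is then forced by universal properties.
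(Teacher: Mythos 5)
Your proposal is correct and follows essentially the same route as the paper: define $\omega_{(A,\rho)}$ as the unique mediating arrow out of $\qot{FA}{\RFinverso{b_{A\times A}(\rho)}}$ induced by $F(q_\rho)$ via the key inequality $\RFinverso{b_{A\times A}(\rho)}\leq R_{F(q_\rho)\times F(q_\rho)}(\delta_{F(\qot{A}{\rho})})$, check naturality and coherence using that quotient maps are epi, and derive uniqueness from the unit axiom plus naturality along $(A,\delta_A)\to(A,\rho)$ and the uniqueness clause of the quotient's universal property. (Indeed, your statement of the key inequality has the correct orientation, which the paper's displayed computation writes backwards.)
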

\begin{proof}
Consider the square
\[ \comsquare{P_q}{R_q}{P}{R.}{\mT_q (F,b)}{\varepsilon_P}{\varepsilon_R}{(F,b)}\]
Let $(A,\rho)$ be an object of $\mR_P$. Then we have that \[\varepsilon_R \mT_q(F,b)(A,\rho)=\qot{(FA)}{b(\rho)}\]
where, to simplify the notation, we denote $b(\rho)=R_{\angbr{F\pr_1}{F\pr_2}^{-1}} b_{A\times A}(\rho)$, and
\[(F,b)\varepsilon_P (A,\rho)=F(\qot{A}{\rho}).\]
We define $\omega_{(A,\rho)}$ as the morphism
\[ \xymatrix@+2pc{
F(A) \ar[dr]_{F(q_{\rho})} \ar[r]^{q_{b(\rho)}} &\qot{(FA)}{b(\rho)}\ar[d]^{\omega_{(A,\rho)}} \\
& F(\qot{A}{\rho}) 
}\]
which exists by the universal property of quotients, because
\[ R_{F(q_{\rho})\times F(q_{\rho}) }(\delta_{F(\qot{A}{\rho}) })= R_{\angbr{F\pr_1}{F\pr_2}^{-1}}b_{A\times A}P_{q_{\rho}\times q_{\rho}}(\delta_{\qot{A}{\rho}})\leq R_{\angbr{F\pr_1}{F\pr_2}^{-1}}b_{A\times A}(\rho)=b(\rho).\]
Now we show that the $\omega$  is a natural transformation $\duefreccia{V_R\ovln{F}}{\omega}{FV_P}$. Let us consider an arrow $\freccia{(A,\rho)}{f}{(B,\sigma)}$ of the category $\mR_P$. Then the diagram 
\[\xymatrix@+2pc{
F(A)\ar[dd]_{V_R\ovln{F}(f)}\ar[rr]^{q_{b(\rho)}} \ar[rd]_{F(q_{b(\rho)})}& &F(A)_{b(\rho)}\ar[dd]^{V_R\ovln{F}(f)}\\
& (F\qot{A}{\rho})\ar[dd]_(0.3){FV_P(f)}\ar@{<-}[ru]_{\omega_{(A,\rho)}}\\
F(B)\ar[dr]_{F(q_{b(\rho)})}\ar[rr]|!{[d];[r]}\hole_(0.55){\;\;\;\;q_{b(\sigma)}} & &F(B)_{b(\sigma)}\\
& (F\qot{B}{\sigma})\ar@{<-}[ru]_{\omega_{(B,\sigma)}}.
}\]
commutes, because every triangle commutes and the left and the back squares commute, hence, using the fact that quotients are epi, we can show that the right square commutes. Therefore, $\omega$ is a natural transformation. Moreover using the same argument of Proposition \ref{proposition every morf extend uniquile to T-morf} we can conclude that $\omega$ is a 2-cell of $\QED$.

It is direct to show that $((F,b), \omega)$ satisfies the coherence axioms of lax morphisms of algebras. Again, following the idea of Proposition \ref{proposition every morf extend uniquile to T-morf}, we have that the following axiom is satisfied 
\[\xymatrix@+1pc{
P \ar[d]_{\eta_P}\ar[r]^{(F,b)} &R\ar[d]^{\eta_R}&&P\ar[dd]_{1_P}\ar[r]^{(F,b)}& R\ar[dd]^{1_B}\\
P_q \ar[d]_{\varepsilon_P} \ar[r]^{(\ovln{F},\ovln{b})} \xtwocell[r]{}<>{_<5>\omega}&R_q \ar[d]^{\varepsilon_R}\ar@{}[rr]|{=}&& \\
P \ar[r]_{(F,b)} & R&& P\ar[r]_{(F,b)} &R
}\]
because, when $\rho=\delta_A$, then we have that $\omega_{(A,\delta)}=\id_{FA}$.
Now we show that this $\omega$ is unique. Let us consider another 2-cell $\duefreccia{\mT_q(F,b)\varepsilon_R}{\theta}{\varepsilon_P(F,b)}$ such that $((F,b),\theta)$ is a lax-morphism 
\[\xymatrix@+1pc{
P_q \ar[r]^{(\wht{F},\wht{b})} \ar[d]_{\varepsilon_P} \xtwocell[r]{}<>{_<5>\;\;\omega
}& R_q \ar[d]^{\varepsilon_R}\\
P \ar[r]_{(F,b)} &R
}\]
of $\mT_q$ algebras. Then it must satisfy the following condition
\[\xymatrix@+1pc{
P \ar[d]_{\eta_A}\ar[r]^{(F,b)} &R\ar[d]^{\eta_B}&&P\ar[dd]_{1_P}\ar[r]^{(F,b)}& R\ar[dd]^{1_B}\\
P_q \ar[d]_{\varepsilon_P} \ar[r]^{(\ltil{F},b)} \xtwocell[r]{}<>{_<5>\theta}&R_q \ar[d]^{\varepsilon_R}\ar@{}[rr]|{=}&& \\
P \ar[r]_{(F,b)} & R&& P\ar[r]_{(F,b)} &R
}\]
and this means that $\theta_{(A,\delta_A)}=\id_{FA}$. Therefore, since $\theta$ is a natural transformation from $V_R\ovln{F}$ to $FV_P$, then the following diagram
\[\xymatrix@+2pc{
F(A)\ar[d]_{\theta_{(A,\delta_A)}} \ar[r]^{q_{b(\rho)}}&\qot{F(A)}{b(\rho)}\ar[d]^{\theta_{(A,\rho)}}\\
(FA) \ar[r]_{F(q_{\rho})}& F(\qot{A}{\rho})
}\]
commutes, and since $\theta_{(A,\rho_A)}=\id_{FA}$, then we have that $\theta_{(A,\rho)}$ must be $\omega_{(A,\rho)}$ because, by definition, $\omega_{(A,\rho)}$ is the unique arrow such that the diagram 
\[ \xymatrix@+2pc{
F(A) \ar[dr]_{F(q_{\rho})} \ar[r]^{q_{b(\rho)}} &\qot{(FA)}{b(\rho)}\ar[d]^{\omega_{(A,\rho)}} \\
& F(\qot{A}{\rho}) 
}\]
commutes. Hence $\theta=\omega$.
\end{proof}
Combining previous results we can prove the following theorem, showing the 2-monadicity of the elementary quotient completion.
\begin{theorem}
The 2-category $\alg{\mT_q}$ of strict algebras, algebras morphisms and $\mT_q$-transformation is 2-equivalent to the 2-category $\QED$.
\end{theorem}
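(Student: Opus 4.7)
The plan is to assemble the 2-equivalence from the three preceding propositions, mirroring the strategy used for Theorem \ref{theorem comp. comp. is monadic} in the comprehension case. First I would define a 2-functor $\funE \colon \QED \to \alg{\mT_q}$ as follows: on an object $P$, set $\funE(P) = (P, \varepsilon_P)$, which is a strict $\mT_q$-algebra by Proposition \ref{proposition P has quotient imples T-alg}; on a 1-cell $\freccia{P}{(F,b)}{R}$, take $\funE(F,b) = ((F,b), \omega)$ with $\omega$ the unique 2-cell supplied by Proposition \ref{proposition every morf extend uniquile to T-morf}, which is moreover invertible because $(F,b)$ preserves quotients in $\QED$; on a 2-cell $\theta$, set $\funE(\theta) = \theta$ and verify the $\mT_q$-transformation coherence, which reduces to a diagram chase using the universal property of quotients and the construction of $\omega$ from the same data.

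Next I would exhibit the inverse. Proposition \ref{proposition P algebras imples comprehensios} tells us that every $\mT_q$-algebra $(P,(F,b))$ has underlying doctrine in $\QED$ and moreover $(F,b) = \varepsilon_P$; hence the forgetful 2-functor $\funU_{\mT_q}$ restricts to a 2-functor $\funU \colon \alg{\mT_q} \to \QED$. The two composites are identities on objects: $\funU \funE (P) = P$ and $\funE \funU (P,(F,b)) = (P, \varepsilon_P) = (P,(F,b))$. On 1-cells the composites are again identity up to the uniqueness clause of Proposition \ref{proposition every morf extend uniquile to T-morf}: if $((F,b),\gamma)$ is an algebra morphism then $\gamma$ must coincide with the canonical $\omega$, and on 2-cells there is nothing to check beyond compatibility with the $\omega$'s, which follows from naturality.

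The genuinely non-trivial content, namely the existence and uniqueness of the comparison 2-cell $\omega$ via the universal property of quotients together with the identification of algebras with doctrines in $\QED$, has already been absorbed into Propositions \ref{proposition P has quotient imples T-alg}, \ref{proposition P algebras imples comprehensios}, and \ref{proposition every morf extend uniquile to T-morf}. The remaining work in the present theorem is therefore bookkeeping: strict 2-functoriality of $\funE$, and matching of 1-cells and 2-cells in both directions. I expect no real obstacle, since the lax-idempotency of $\mT_q$ (to be established as in the comprehension case) guarantees that all coherence data is essentially unique, so the equivalence is in fact a strict isomorphism of 2-categories on the nose on objects and 1-cells, with 2-cells identified directly.
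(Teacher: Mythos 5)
Your proposal is correct and follows essentially the same route as the paper, which simply combines Propositions \ref{proposition P has quotient imples T-alg}, \ref{proposition P algebras imples comprehensios} and \ref{proposition every morf extend uniquile to T-morf} (the quotient analogues of the comprehension case) without writing out the bookkeeping. The only detail worth making explicit is the one you already flag: for a 1-cell of $\QED$ the canonical $\omega$ is invertible (indeed the identity on chosen quotients) precisely because $F$ preserves quotients, which is what matches $\QED$-1-cells with algebra morphisms rather than merely lax ones.
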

Finally, we can conclude, again by directly applying the previous results, with the following result.
\begin{theorem}
The 2-monad $\freccia{\PD}{\mT_q}{\PD}$ is lax-idempotent.
\end{theorem}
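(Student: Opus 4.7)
The plan is to apply the definition of lax-idempotent 2-monad directly, reducing it to the two preceding propositions. Recall that $\mT_q$ is lax-idempotent when, for any pair of $\mT_q$-algebras $(A,a)$ and $(B,b)$ and every 1-cell $\freccia{A}{f}{B}$ in $\ED$, there is a unique 2-cell $\overline{f}$ such that $(f,\overline{f})$ is a lax morphism of $\mT_q$-algebras.

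First I would invoke Proposition \ref{proposition P algebras imples comprehensios} to identify an arbitrary $\mT_q$-algebra $(P,a)$ with one of the form $(P,\varepsilon_P)$, where $P$ is a doctrine in $\QED$ and the action is forced to coincide with $\varepsilon_P$. This reduces the problem to considering algebras whose underlying object already lives in $\QED$ and whose structure map is the counit. In particular, any 1-cell $\freccia{P}{(F,b)}{R}$ in $\ED$ between two such algebras is just a 1-cell between doctrines in $\QED$ in the sense of $\ED$.

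Next, I would apply Proposition \ref{proposition every morf extend uniquile to T-morf}, which has already produced the required 2-cell $\omega$ from the universal property of quotients, verified its naturality and coherence, and shown that it is the unique 2-cell making $((F,b),\omega)$ into a lax morphism of $\mT_q$-algebras. Combining these two facts yields exactly the defining condition for $\mT_q$ to be lax-idempotent.

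Consequently, no new argument is required: the theorem is an immediate consequence of Proposition \ref{proposition P algebras imples comprehensios} and Proposition \ref{proposition every morf extend uniquile to T-morf}. The only subtle point, already handled in the latter proposition, is that the existence and uniqueness of $\omega$ rely essentially on the fact that in $\QED$ quotients are effective and of effective descent, so that the mediating arrow through the quotient map $q_{b(\rho)}$ is determined; this was the main obstacle and it has been discharged in the preceding proposition.
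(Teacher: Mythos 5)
Your proof is correct and follows exactly the paper's argument: the paper likewise deduces lax-idempotence of $\mT_q$ by combining the proposition identifying every $\mT_q$-algebra with $(P,\varepsilon_P)$ for $P$ in $\QED$ with the proposition establishing the existence and uniqueness of the 2-cell $\omega$ making any 1-cell into a lax morphism of algebras. No substantive difference from the paper's own (very terse) proof.
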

So, as in the cases of the comprehension completion and the comprehensive diagonal completion, we have that having quotients is a property of an elementary doctrine.
\begin{remark}
Observe that, as in the case of comprehension completion, the results of this section can be generalized in the case we do not assume any choice of quotients. Again, in this case, the counit of the elementary quotient completion becomes a pseudo-natural transformation, and then the monad $\mT_q$ is just a pseudo-monad, and not a 2-monad. 
\end{remark}

\section{Distributive laws}\label{section pseudo-distributive laws}
In the previous sections we provided an algebraic framework to deal with the quotient completion and the comprehension completion of elementary doctrines. Taking the advantage of this presentation, we show how these free constructions interact, i.e. we provide a distributive law between the 2-monads $\mT_c$ and $\mT_q$. 

Recall from Section \ref{section 2dim monad}, in particular Theorem \ref{theorem ps-dist-law equivalent to lifting}, that showing the existence of a distributive law between the 2-monads $\mT_q$ and $\mT_c$ is equivalent to provide a lifting of $\mT_q$ on the 2-category $\alg{\mT_c}$. 

Notice that combining the equivalence of 2-categories $\alg{\mT_c}\equiv \CE$ we provide in Theorem \ref{theorem comp. comp. is monadic}, together with the result \cite[Lemma 5.3]{QCFF}, which states that if $P$ has comprehensions then the quotient completion $P_q$ has comprehensions as well, the construction of a lifting of $\mT_q$ on the 2-category $\alg{\mT_c}$ is quite direct.

\begin{lemma}\label{prop T_cPR is a functor}
The assignment 
\[\freccia{\alg{\mT_c}((P,a),(R,c))}{\ltil{\mT_q}_{(P,a)(R,c)}}{\alg{\mT_c}((P_q,\varepsilon_{P_q}),(R_q,\varepsilon_{R_q}))}\] 
mapping a 1-cell $(F,b)\mapsto \mT_q(F,b)$ and 
 and a 2-cell $(F,b)\theta\mapsto \mT_q\theta$
is a functor.
\end{lemma}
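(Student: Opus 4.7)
The plan is to reduce everything to the equivalence $\alg{\mT_c}\equiv \CE$ established in Theorem \ref{theorem comp. comp. is monadic}, together with the fact cited in the paragraph preceding the lemma that the quotient completion preserves the property of having comprehensions (Lemma 5.3 of \cite{QCFF}). Thus, if $(P,a)$ is an object of $\alg{\mT_c}$, equivalently an object of $\CE$, then $P_q$ lies in $\CE$ as well, and so carries a canonical algebra structure $\varepsilon_{P_q}$ coming from the unit of the 2-adjunction $\funC\dashv \funU$ evaluated at $P_q$. This gives the target object $(P_q,\varepsilon_{P_q})$ of the assignment, and similarly for $(R_q,\varepsilon_{R_q})$.

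Next I would verify that for every 1-cell $(F,b)\colon (P,a)\to (R,c)$ in $\alg{\mT_c}$, the morphism $\mT_q(F,b)=(\ovln{F},\ovln{b})$ underlies a strict morphism of $\mT_c$-algebras from $(P_q,\varepsilon_{P_q})$ to $(R_q,\varepsilon_{R_q})$. Under the equivalence with $\CE$, this amounts to checking that $(\ovln{F},\ovln{b})$ preserves comprehensions whenever $(F,b)$ does. This follows directly from the explicit description of comprehensions in $P_q$ recalled in the proof of Proposition 4.4 of \cite{EQC}: comprehensions in $P_q$ are of the form $\freccia{(A,\rho)}{\comp{\alpha}}{(A,\rho)}$ where $\alpha\in \des_\rho$, built out of comprehensions in $P$; since $F$ preserves comprehensions and $b$ commutes with the relevant reindexings, $\ovln{F}$ inherits the preservation property.

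For the action on 2-cells, by Proposition \ref{prop Q is well define on 2-cells} the underlying natural transformation $\mT_q\theta$ is already a 2-cell in $\QED$; I only need to notice that strict algebra morphisms in $\alg{\mT_c}$ form a sub-2-category whose 2-cells are unrestricted, so no further condition arises. Finally, functoriality (preservation of identities and vertical composition of 2-cells) is inherited from the 2-functoriality of $\mT_q=\funU\funQ$ established earlier in Section \ref{section doctrine with quot}.

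The main obstacle will be the verification that $\ovln{F}$ strictly preserves comprehensions of $P_q$: one must keep track of the interaction of the comprehension of a descent datum $\alpha\in\des_\rho$ with the reindexing $R_{\angbr{F\pr_1}{F\pr_2}^{-1}}$ used in the definition of $\ovln{F}$. Once this is unwound, the required equality $\ovln{F}(\comp{\alpha})=\comp{\ovln{b}_{(A,\rho)}(\alpha)}$ follows from $F(\comp{\alpha})=\comp{b_A(\alpha)}$ and from the fact that in both $P_q$ and $R_q$ comprehensions are, at the level of underlying arrows, inherited from those of $P$ and $R$ respectively. Everything else is a routine consequence of Theorem \ref{theorem comp. comp. is monadic} and the 2-functoriality of $\mT_q$.
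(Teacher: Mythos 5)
Your proposal follows essentially the same route as the paper's proof: invoke Theorem \ref{theorem comp. comp. is monadic} to identify $\mT_c$-algebras with doctrines in $\CE$, use Lemma 5.3 of \cite{QCFF} to conclude that $P_q$ has comprehensions, and then verify directly that $\mT_q(F,b)$ and $\mT_q\theta$ are a 1-cell and a 2-cell of $\CE$ — the paper leaves this last verification as a ``direct check,'' which you flesh out. Two small slips worth correcting but not affecting the argument: the algebra structure $\varepsilon_{P_q}$ comes from the \emph{counit}, not the unit, of $\funC\dashv\funU$, and the comprehension of $\alpha\in\des_\rho$ in $P_q$ has domain $(A_\alpha,P_{\comp{\alpha}\times\comp{\alpha}}(\rho))$ rather than $(A,\rho)$.
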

\begin{proof}
Since $(P,a)$ is a $\mT_c$-algebra then, by Theorem \ref{theorem comp. comp. is monadic}, we have that the doctrine $P$ has comprehensions, and by \cite[Lemma 5.3]{QCFF}, we can conclude that $P_q$ has comprehensions as well.
Similarly, one can directly check that if $(F,b)$ is a 1-cell of $\CE$ and $\theta$ is a 2-cell then $\mT_q(F,b)$ is again a 1-cell of $\CE$ and $\mT_q\theta$ is a 2-cell of $\CE$. Therefore we conclude that $\ltil{\mT_q}_{(P,a)(R,c)}$ is a functor.
\end{proof}
\begin{lemma}\label{prop the lifting is a 2-fun}
The functor defined in \ref{prop T_cPR is a functor} can be extended to a 2-functor 
\[ \freccia{\alg{\mT_c}}{\ltil{\mT_q}}{\alg{\mT_c}}\]
where $\ltil{\mT_q}(P,a):=(P_q, \varepsilon_{P_q})$. Moreover it is a 2-monad, whose identity and multiplication are those induced by $\mT_q$.
\end{lemma}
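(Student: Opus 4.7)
The plan is to exhibit $\ltil{\mT_q}$ as a \emph{strict lifting} of $\mT_q$ to $\alg{\mT_c}$ along the forgetful 2-functor $\freccia{\alg{\mT_c}}{\funU}{\ED}$, and to deduce both the 2-functoriality and the 2-monad structure from the corresponding properties of $\mT_q$ together with the 2-faithfulness of $\funU$. First, I would check that the object assignment $(P,a)\mapsto(P_q,\varepsilon_{P_q})$ is well defined: by Theorem \ref{theorem comp. comp. is monadic}, being a $\mT_c$-algebra is equivalent to the doctrine $P$ having full comprehensions; by \cite[Lemma 5.3]{QCFF} (already invoked in Lemma \ref{prop T_cPR is a functor}) the doctrine $P_q$ inherits full comprehensions from $P$, so the counit $\varepsilon_{P_q}$ is defined and canonically endows $P_q$ with a (unique, since $\mT_c$ is property-like) $\mT_c$-algebra structure.

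Next, I would observe that by construction the underlying 0-, 1-, and 2-cells of $\ltil{\mT_q}(P,a)$, $\ltil{\mT_q}(F,b)$, and $\ltil{\mT_q}(\theta)$ are exactly $P_q$, $\mT_q(F,b)$, and $\mT_q\theta$, so the equation $\funU\ltil{\mT_q}=\mT_q\funU$ holds strictly. Because $\funU$ is 2-faithful, preservation of vertical and horizontal composition, of identities, and of the interchange law for $\ltil{\mT_q}$ then follow at once from the 2-functoriality of $\mT_q$. The 2-monad structure is obtained by lifting the unit $\eta^q$ and the multiplication $\mu^q$ of $\mT_q$ componentwise: one sets $\ltil{\eta^q}_{(P,a)}:=\eta^q_P=(L_P,l_P)$ and $\ltil{\mu^q}_{(P,a)}:=\mu^q_P=(V_{P_q},v_{P_q})$, and checks that these are $\mT_c$-algebra morphisms with the prescribed domain and codomain. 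Since $\mT_c$ is colax-idempotent (hence property-like), it suffices to verify that $L_P$ and $V_{P_q}$ preserve full comprehensions; but the explicit description of comprehensions in $P_q$ inherited from those of $P$ makes this a direct inspection. Once this is done, the 2-naturality of $\ltil{\eta^q}$ and $\ltil{\mu^q}$ and the associativity and unit axioms for $\ltil{\mT_q}$ transfer immediately from the corresponding properties of $\eta^q$ and $\mu^q$, again using 2-faithfulness of $\funU$.

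The only nontrivial step is the verification that the structural 1-cells $L_P$ and $V_{P_q}$ preserve comprehensions; all the rest is a formal consequence of strict lifting along a 2-faithful 2-functor. I expect this verification to be entirely routine given the explicit description of full comprehensions in $P_q$ worked out in Section \ref{sec doctrine with comp} and \cite[Lemma 5.3]{QCFF}, so the main obstacle is essentially bookkeeping rather than a conceptual difficulty.
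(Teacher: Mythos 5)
Your proposal is correct and follows exactly the route the paper intends (the paper in fact omits the proof of this lemma as routine, and its later Theorem~\ref{theorem dist law} relies precisely on the strict-lifting identity $\funU_{\mT_c}\ltil{\mT_q}=\mT_q\funU_{\mT_c}$ that you establish): well-definedness on objects via \cite[Lemma 5.3]{QCFF}, transfer of 2-functoriality and the monad axioms along the locally faithful forgetful 2-functor, and the one genuine check that the components of $\eta^q$ and $\mu^q$ preserve the (chosen) comprehensions, which under the equivalence $\alg{\mT_c}\equiv\CE$ of Theorem~\ref{theorem comp. comp. is monadic} is what makes them strict $\mT_c$-algebra morphisms. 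The only slight imprecision is your appeal to property-likeness of $\mT_c$ at that step: for a \emph{strict} lifting what is needed is strict preservation of the chosen comprehensions (i.e.\ being a 1-cell of $\CE$), not merely the essential uniqueness of the algebra structure, but this does not affect the argument.
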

 

\begin{theorem}\label{theorem dist law}
There exists a distributive law $\freccia{\mT_c\mT_q}{\delta}{\mT_q\mT_c}$.
\end{theorem}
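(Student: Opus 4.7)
The plan is to deduce the theorem directly from the general equivalence between distributive laws and liftings, using the lifting that has already been built in the preceding lemma. By Theorem \ref{theorem ps-dist-law equivalent to lifting}, giving a distributive law $\freccia{\mT_c\mT_q}{\delta}{\mT_q\mT_c}$ is the same data as giving a 2-monad $\ltil{\mT_q}$ on $\alg{\mT_c}$ such that $\funU_{\mT_c}\ltil{\mT_q}=\mT_q\funU_{\mT_c}$, where $\funU_{\mT_c}$ is the forgetful 2-functor. Lemma \ref{prop the lifting is a 2-fun} exhibits such a 2-monad: on a $\mT_c$-algebra $(P,a)$ it sends $(P,a)\mapsto (P_q,\varepsilon_{P_q})$, on 1-cells and 2-cells it acts as $\mT_q$, and its unit and multiplication are those induced by $\mT_q$. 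The compatibility $\funU_{\mT_c}\ltil{\mT_q}=\mT_q\funU_{\mT_c}$ holds by construction, since the underlying doctrine of $\ltil{\mT_q}(P,a)$ is $P_q=\mT_q(P)$ and the same is true on morphisms.

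Therefore the plan is: (i) invoke Lemma \ref{prop the lifting is a 2-fun} to produce the lifting $\ltil{\mT_q}$; (ii) verify, as a formality, that the two lifting equalities ($\funU_{\mT_c}\ltil{\mT_q}=\mT_q\funU_{\mT_c}$ on objects, 1-cells and 2-cells, and the analogous equalities for unit and multiplication) hold; (iii) apply Theorem \ref{theorem ps-dist-law equivalent to lifting} to extract the distributive law $\delta$. Explicitly, one reads off $\delta$ as the 2-natural transformation whose component at a doctrine $P$ is obtained by applying $\ltil{\mT_q}$ to the free $\mT_c$-algebra $(\mT_c P,\mu^c_P)$ and projecting back to $\ED$ via $\funU_{\mT_c}$.

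There is no real obstacle at this stage, since all the substantive work has been done: the delicate step was showing that if $P$ has full comprehensions then $P_q$ still has full comprehensions (used in Lemma \ref{prop T_cPR is a functor} via \cite[Lemma 5.3]{QCFF}), and that the forgetful-preserving extension to a 2-functor actually carries the 2-monad structure of $\mT_q$ (Lemma \ref{prop the lifting is a 2-fun}). Once these are in place, the theorem is a one-line application of the formal correspondence between distributive laws and liftings. If one wanted an explicit description of $\delta_P$ rather than just its existence, one would unwind the bijection in Theorem \ref{theorem ps-dist-law equivalent to lifting}: the component $\delta_P$ is the composite $\mT_c\mT_q P \to \mT_q\mT_c P$ determined by the requirement that the $\mT_c$-algebra structure on $\ltil{\mT_q}(\mT_c P,\mu^c_P)=( \mT_q\mT_c P,\varepsilon_{(\mT_c P)_q})$ be the transported one along $\delta_P$ from the free algebra $(\mT_c\mT_q P,\mu^c_{\mT_q P})$.
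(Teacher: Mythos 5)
Your proposal is correct and follows essentially the same route as the paper: both take the lifting $\ltil{\mT_q}$ furnished by Lemma \ref{prop the lifting is a 2-fun}, note the compatibility $\funU_{\mT_c}\ltil{\mT_q}=\mT_q\funU_{\mT_c}$, and invoke the equivalence between liftings and distributive laws (the paper cites the external version of Theorem \ref{theorem ps-dist-law equivalent to lifting}) to extract $\delta$. The extra unwinding of $\delta_P$ via the free algebra $(\mT_c P,\mu^c_P)$ is a harmless elaboration not present in the paper's one-line argument.
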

\begin{proof}
If we consider the forgetful 2-functor $\freccia{\alg{\mT_c}}{\funU_{\mT_c}}{\ED}$, we have the equality $\mT_q\funU_{\mT_c}=\funU_{\mT_c}\ltil{\mT_q}$. Then  $\ltil{\mT_q}$ is a lifting of $\mT_q$. Thus, by applying Theorem \cite[Theorem 1]{PDLAVB} to conclude that there exists a distributive law $\freccia{\mT_c\mT_q}{\delta}{\mT_q\mT_c}$.
\end{proof}
\begin{corollary} 
The 2-functor $\mT_q\mT_c$ is a 2-monad.
\end{corollary}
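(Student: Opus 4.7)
The plan is to simply invoke the general machinery already established earlier in the paper. By Theorem \ref{theorem dist law}, we have a distributive law $\freccia{\mT_c\mT_q}{\delta}{\mT_q\mT_c}$ of 2-monads on the 2-category $\ED$. Thus the hypotheses of Theorem \ref{theorem the composite ps-monad is a ps monad} are satisfied with $\mS := \mT_c$ and $\mT := \mT_q$.

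Applying that theorem verbatim yields that the composite 2-functor $\mT_q\mT_c$ carries the structure of a 2-monad on $\ED$, with unit given by $\eta^{\mT_q}\mT_c \circ \eta^{\mT_c} = \mT_q\eta^{\mT_c}\circ\eta^{\mT_q}$ and multiplication given by the composite
\[\xymatrix@+1pc{
\mT_q\mT_c\mT_q\mT_c \ar[r]^{\mT_q\delta\mT_c} & \mT_q\mT_q\mT_c\mT_c \ar[r]^{\mu^{\mT_q}\mu^{\mT_c}} & \mT_q\mT_c.
}\]
There are no further verifications required: the coherence axioms for a 2-monad follow from the distributive law axioms together with the 2-monad axioms of $\mT_c$ and $\mT_q$, and these are exactly what Theorem \ref{theorem the composite ps-monad is a ps monad} encapsulates.

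Since the statement is an immediate corollary of two results already proven in the paper, there is no genuine obstacle; the only thing to check is that the roles of $\mS$ and $\mT$ are assigned consistently with the direction $\mT_c\mT_q \to \mT_q\mT_c$ of the distributive law, which matches the convention $\mS\mT \to \mT\mS$ used in Theorem \ref{theorem the composite ps-monad is a ps monad}. Hence the proof can be given in one or two lines, concluding also that $\alg{\mT_q\mT_c}$ is canonically isomorphic to $\alg{\ltil{\mT_q}}$, and thus corresponds to elementary doctrines equipped compatibly with full comprehensions and stable effective quotients of effective descent.
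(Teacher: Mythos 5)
Your proposal is correct and follows exactly the same route as the paper: the corollary is obtained by combining Theorem \ref{theorem dist law} (the distributive law $\mT_c\mT_q \Rightarrow \mT_q\mT_c$) with Theorem \ref{theorem the composite ps-monad is a ps monad}, taking $\mS = \mT_c$ and $\mT = \mT_q$. Your additional check that the direction of the distributive law matches the convention $\mS\mT \to \mT\mS$, and the explicit formula for the multiplication, are accurate elaborations of what the paper leaves implicit.
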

\begin{proof}
It follows by Theorem \ref{theorem the composite ps-monad is a ps monad} and Theorem \ref{theorem dist law}. 
\end{proof}

\begin{remark}
As observed in \cite{EQC}, the 2-monad $\mT_d$ which freely adds comprehensive diagonals in general does not preserve full comprehensions and quotients. In particular, if $P$ has full comprehensions, then $\mT_d(P)$ has only weak comprehensions, and similarly, if $P$ effective quotients, the doctrine $\mT_d(P)$ has only a weak form of quotients. Therefore this 2-monad $\mT_d$ cannot be lifted to the 2-categories of $\alg{\mT_c}$ and $\alg{\mT_q}$.
\end{remark}

\begin{remark}
Recall from \cite{QCFF,EQC} that all the 2-monads $\mT_c$, $\mT_q$ and $\mT_d$ preserve the existential structure of a doctrine, i.e. if $P$ is an elementary existential then all the doctrines $\mT_c(P)$ $\mT_q(P)$ and $\mT_d(P)$ are elementary and existential. Therefore, if we consider the 2-monad $\mT_e$ of the existential completion introduced in \cite{ECRT}, we have that all these 2-monad can be lifted to 2-monads on $\alg{\mT_e}$, i.e. the 2-category of elementary and existential doctrines. Hence we can conclude that there are the following distributive laws:
\begin{itemize}
\item $\freccia{\mT_e\mT_c}{\delta_1}{\mT_c\mT_e}$;
\item $\freccia{\mT_e\mT_q}{\delta_2}{\mT_q\mT_e}$;
\item $\freccia{\mT_c\mT_d}{\delta_3}{\mT_d\mT_e}$.
\end{itemize}
Again this is given by the correspondence between lifting of 2-monads and distributive laws.
\end{remark}

\section*{Acknowledgements}
My acknowledgements go first my supervisor Pino Rosolini, for his indispensable comments and
suggestions. Then, I wish also to thank Milly Maietti, Fabio Pasquali and Jacopo Emmenegger for useful discussions and comments on the elementary quotient completions.
\bibliographystyle{apalike}
\bibliography{biblio_tesi_PHD}

\end{document}